\crefname{hypothesis}{Hypothesis}{Hypotheses}
\Crefname{ALC@unique}{Line}{Lines}
\colorlet{texcscolor}{blue!50!black}
\colorlet{texemcolor}{red!70!black}
\colorlet{texpreamble}{red!70!black}
\colorlet{codebackground}{black!25!white!25}
\lstdefinestyle{siamlatex}{%
  style=tcblatex,
  texcsstyle=*\color{texcscolor},
  texcsstyle=[2]\color{texemcolor},
  keywordstyle=[2]\color{texemcolor},
  moretexcs={cref,Cref,maketitle,mathcal,text,headers,email,url},
}
\newcommand{\ceil}[1]{\left\lceil #1 \right\rceil}
\DeclareTotalTCBox{\code}{ v O{} }
{ 
  fontupper=\ttfamily\color{black},
  nobeforeafter,
  tcbox raise base,
  colback=codebackground,colframe=white,
  top=0pt,bottom=0pt,left=0mm,right=0mm,
  leftrule=0pt,rightrule=0pt,toprule=0mm,bottomrule=0mm,
  boxsep=0.5mm,
  #2}{#1}
\patchcmd\newpage{\vfil}{}{}{}
\crefname{section}{Section}{Sections} 
\crefname{subsection}{Section}{Sections}
\Crefname{ALC@unique}{Step}{Steps}
\newcommand{\argmin}{\operatornamewithlimits{argmin}}
\DeclarePairedDelimiter\abs{\lvert}{\rvert}%
\DeclarePairedDelimiter\norm{\lVert}{\rVert}%
\newcommand{\nys}{Nystr{\"o}m }
\newcommand{\alphahat}{\hat{\alpha}}
\newcommand{\betahat}{\hat{\beta}}
\newcommand{\DNOA}{D^*_{N,\Omega}(\annt)}
\newcommand{\annt}{\mathcal{A}}
\newcommand{\anntXp}{\mathcal{A}_{X}}
\newcommand{\anntpi}{\mathcal{A}^{(i)}}
\newcommand{\dist}[1]{\text{dist}( #1 )}
\newcommand{\distinf}[1]{\text{dist}_{\infty}( #1 )}
\let\oldabs\abs
\def\abs{\@ifstar{\oldabs}{\oldabs*}}
\let\oldnorm\norm
\def\norm{\@ifstar{\oldnorm}{\oldnorm*}}
\title{Fast Deterministic Approximation of Symmetric Indefinite Kernel Matrices with High Dimensional Datasets}
\author{Difeng Cai\thanks{Department of Mathematics, Emory University, Atlanta, GA 30322 (\email{dcai7@emory.edu}, \email{jnagy@emory.edu},\email{yxi26@emory.edu}).The research of Difeng Cai and Yuanzhe Xi are supported by NSF award  OAC 2003720 and the research of James Nagy is supported by NSF award DMS 1819042.}
\and James Nagy$^\dagger$
\and Yuanzhe Xi$^\dagger$}
\begin{document}
\maketitle

\begin{abstract}
Kernel methods are used frequently in various applications of machine learning. For large-scale high dimensional applications, the success of kernel methods hinges on the ability to operate certain large dense kernel matrix $K$. An enormous amount of literature has been devoted to the study of symmetric positive semi-definite (SPSD) kernels, where \nys methods compute a low-rank approximation to the kernel matrix via choosing landmark points. In this paper, we study the \nys method for approximating both symmetric \emph{indefinite} kernel matrices as well SPSD ones.
We first develop a theoretical framework for general symmetric kernel matrices, which provides a theoretical guidance for the selection of landmark points. We then leverage discrepancy theory to propose the \emph{anchor net method} for computing accurate \nys approximations with optimal complexity.
The anchor net method operates entirely on the dataset without requiring the access to $K$ or its matrix-vector product.
Results on various types of kernels (both indefinite and SPSD ones) and machine learning datasets demonstrate that the new method achieves better accuracy and stability with lower computational cost compared to the state-of-the-art \nys methods.
\end{abstract}

\begin{keywords}
Indefinite kernel, low-rank approximation, error analysis, high-dimensional data
\end{keywords}

\begin{AMS}
15A23, 68W25, 11K38, 65D99
\end{AMS}

\section{Introduction} 
\label{sec:intro}
Kernel methods provide a powerful tool for solving nonlinear problems in data science and are used in various machine learning tools such as support vector machine (SVM), kernel ridge regression, spectral clustering, Gaussian processes (GPs) (cf. \cite{bishopbook}). Given $n$ data points $x_1,\ldots, x_n\in\mathbb{R}^d$ and a kernel function $\kappa(\cdot,\cdot)$, kernel methods form an $n\times n$ kernel matrix $K_{i,j}=\kappa(x_i, x_j)$ to implicitly map data to a kernel feature space, where the originally nonlinear relationship between categories can be transformed into a linear one. The kernel function $\kappa$ is often taken to be symmetric positive semi-definite (SPSD) in the literature \cite{vapnikbook,eigCMAM}. Recently, methods based on indefinite kernel functions such as jittering kernel \cite{svm2002}, Kullback-Leibler divergence kernel \cite{svm2003}, tangent distance kernel \cite{svm2002tangent} and multiquadric kernel \cite{multiquadric2004} have also been developed. In addition, indefinite kernel matrices also occur as the derivatives of SPSD kernels in solving optimization problems (see, for example \cite{bach2002kernel}) {and in non-metric proximity learning (cf. \cite{nys2015ind,nys2015review})}. Theoretical justifications for the support vector machines (SVMs) associated with indefinite kernels can be found in \cite{indefinite2004,indefinite2005}. 

Due to the need to assemble and operate dense kernel matrices $K$, kernel methods are often quoted to scale at least $O(n^2)$. A popular approach to circumvent this computational bottleneck is to work with a low-rank approximation to $K$. \nys methods are widely used to derive low-rank approximations to SPSD kernel matrices that arise frequently in SVMs and other applications. The method first generates a small subset of points $S$, known as \emph{landmark points}, and then computes a low-rank approximation of the following form:
\begin{equation}
\label{eq:nys} 
    K\approx K_{XS} K_{SS}^{+} K_{SX},
\end{equation}
where we use $K_{IJ}$ to denote the matrix with entries given by $\kappa(x,y)$ for $x\in I\subset\mathbb{R}^d, y\in J\subset\mathbb{R}^d$ and $K_{SS}^{+}$ denotes the pseudoinverse of $K_{SS}$.
Different ways of choosing $S$ yield different variants of \nys method. 
The original \nys method in \cite{nys2001} selects $S$ via a uniform sampling over the dataset, and is often called the \emph{uniform} \nys method. 
Later developments for generating $S$ include
non-uniform sampling techniques such as ridge leverage score based sampling \cite{nys2012,alaoui2015,leverage2016,nys2017,rudi2018} and determinantal point processes \cite{kdpp2011,dpp2020,dpp2021diversity}, $k$-means clustering based method \cite{nys2008kmeans,nys2010kmeans}, randomized projection method \cite{yuji20}, etc.
Since the choice of $S$ dictates the approximation accuracy, a fundamental question for \nys method is the following.
\begin{itemize}
    \item \textbf{Question 1.} Given a dataset $X$, what kind of subset $S$ yields a good \nys approximation ?
\end{itemize}
{For SPSD kernels, a probabilistic interpretation  is given by ridge leverage score.}
For general possibly \emph{indefinite} kernels, from a computational point of view, it is more desirable to have a straightforward geometric understanding of good landmark points $S$, which will facilitate the fast computation of $S$ in $O(n)$ complexity.

Despite the lack of discussion, the query for applying \nys approximation in \eqref{eq:nys} to \emph{indefinite} kernel matrices is quite natural, because mathematically, \eqref{eq:nys} does \emph{not} require $K$ to be SPSD. 
Thus it is natural to ask the following questions:
\begin{itemize}
    \item \textbf{Question 2.} Does \nys approximation  \eqref{eq:nys} apply to \emph{indefinite} kernel matrices ?
    \item \textbf{Question 3.} For a symmetric (possibly indefinite) kernel matrix, how should one choose $S$ in $O(n)$ complexity to obtain an accurate \nys approximation ?
\end{itemize}
Note that ridge leverage score is only defined for SPSD kernel matrix and a different low-rank approximation method called random kitchen sinks method (or random Fourier features method) \cite{kitchen2007,kitchen2008,fastfood} { not only requires the kernel to be SPSD but also shift-invariant}. Hence those methods can \emph{not} be directly applied to \emph{indefinite} kernels.
{ 
Obviously, the original \nys method \cite{nys2001} based on uniform sampling can be applied to indefinite kernels (cf. \cite{nys2015ind,nys2015review}) since it is essentially sampling over the index set.
However, it is unclear how much the indefinite case differs from the positive definite case in terms of \nys approximation and what kind of landmark points are considered good.
}
The questions above motivate the work in this paper and the main contributions of the paper are summarized below.

\begin{enumerate}
    \item \textbf{Theoretical guidance for landmark point selection}.
    To guide the choice of landmark points, we present a new framework to analyze the \nys approximation error in the general setting where the kernel matrix can be \emph{indefinite}.
    The new error estimate takes the following form and is independent of the underlying scheme to select $S$:
        \begin{equation}
        \label{eq0}
            || K-K_{XS}K_{SS}^+K_{SX}||_{\max}\leq \epsilon_1 + 2\epsilon_2+C_S\epsilon_2^2,
        \end{equation}
       where $\epsilon_1$, $\epsilon_2$ measure certain deviation between $S$ and $X$, $C_S=||K_{SS}^+||_2$ and {$||\cdot||_{\max}$ denotes the max norm, e.g., $\norm{A}_{\max}:=\max\limits_{i,j} |A_{i,j}|$.}
        A geometric interpretation of $\epsilon_1$ and $\epsilon_2$ suggests that landmark points should spread evenly in the dataset in order to achieve small approximation error.
    \item \textbf{Optimal complexity for general symmetric kernels}. 
Based on the analysis, we leverage discrepancy theory and propose an efficient deterministic \nys method for arbitrary symmetric (possibly \emph{indefinite}) kernels. {The proposed method scales $O(dmn)$ for selecting $m$ landmark points in a dataset of $n$ points in $\mathbb{R}^d$ and forming the associated low-rank factors $K_{XS}$ and $K_{SS}$.} This process is highly parallelizable and does \emph{not} require any access to the kernel matrix or its matrix-vector products.

\item \textbf{Improved efficiency, approximation accuracy and stability}.
{Comprehensive experiments have been performed to show that the proposed method outperforms several state-of-the-art methods for various kinds of kernels on both synthetic and real datasets when the same rank is used. We also show that the choice of $S$ significantly affects the numerical stability of the resulting \nys approximation and numerical regularization or stabilization techniques can \emph{not} fully resolve the stability issue.}
\end{enumerate}

The rest of the paper is organized as follows.
Section \ref{sec:nys Methoed} reviews existing \nys methods and
Section \ref{sec:errorbound} presents a general error analysis for \nys approximations to guide the selection of landmark points, valid for both \emph{indefinite} kernels and SPSD ones.
Section \ref{sec:anchornet} introduces the \emph{anchor net method} for computing \nys approximation in linear complexity. Extensive experiments are provided in Section \ref{sec:experiments} and concluding remarks are drawn in Section \ref{sec:conclusion}. In the remaining sections, the following notations will be used throughout the paper.
\begin{itemize}
    \item $|x-y|$ denotes the Euclidean distance between $x,y\in\mathbb{R}^d$;
    \item $\norm{\cdot}$ denotes the 2-norm of a vector or a matrix;
    \item $\norm{\cdot}_{\max}$ denotes the max norm of a matrix, i.e., 
    $\norm{A}_{\max}:=\max\limits_{i,j} |A_{i,j}|$;
\item $\distinf{\cdot,\cdot}$ denotes the distance function in $l^{\infty}$ norm;
\item $\lambda(\Omega)$ denotes the Lebesgue measure of a bounded measurable set $\Omega$ in $\mathbb{R}^d$.
\end{itemize}

\section{General \nys method: SPSD and indefinite cases} 
\label{sec:nys Methoed}
Given an input dataset $X=\{x_{1},\dots,x_{n}\}\subset \mathbb{R}^d$ and a symmetric ({possibly} indefinite) kernel function $\kappa(x,y)$, the corresponding kernel matrix is defined by $K = [\kappa(x_{i},x_{j})]_{i,j=1}^n$. 
For kernel functions supported on the entire domain of definition, such as $\kappa(x,y)=e^{-\Vert x-y\Vert^2}, \tanh(x\cdot y + 1)$, the corresponding kernel matrix $K$ is dense and the corresponding cost for storing the matrix or applying it to a vector is $O(n^2)$. 

The \nys method was proposed in \cite{nys2001} to reduce the quadratic cost by computing an approximate low-rank factorization in the form $K\approx K_{XS} K_{SS}^{+} K_{SX}$, where the size of $S$ is significantly smaller than $n$.
Different variants of \nys method use different methods to compute the landmark points $S$.
Some methods require $K$ to be SPSD, including non-uniform sampling based approaches like leverage score sampling, determinantal point processes, etc., while others like uniform sampling and $k$-means clustering can also be potentially applied to \emph{indefinite} kernel matrices.
We review below some popular \nys methods.


The original \nys method in \cite{nys2001}, known as the \emph{uniform} \nys method, selects landmark points via a uniform sampling over $X$ (or equivalently, over the index set from $1$ to $n$). Since then, a variety of schemes have been developed to select landmark points. See, for example, \cite{nys2008kmeans,kumar2009sampling,nys2010kmeans,nys2012,alaoui2015,leverage2016,nys2017,rudi2018}. Computationally, the uniform \nys method is the most efficient one, since it does \emph{not} require any access to the kernel matrix or its matrix-vector product, and is not iterative. As a result, the uniform \nys method is easy to compute and can be applied to a broad class of kernel matrices.
However, the uniform \nys method also suffers from several issues. Firstly, due to the stochastic nature, it suffers from possibly large variance \cite{rudi2018}. Secondly, the approximation accuracy usually fails to increase consistently with the increase of the number of landmark points. Thirdly, the random choice of landmark points may lead to numerically unstable approximations.
The accuracy slowdown and numerical instability will be illustrated via extensive numerical experiments.

Non-uniform sampling techniques have been developed to improve the approximation accuracy with strong theoretical guarantees \cite{nys2012,alaoui2015,leverage2016,nys2017,kdpp2011}. These methods measure the importance of each data point with some statistical scores. A notable example is the leverage score based sampling \cite{leverage2009,alaoui2015,leverage2016}, including determinantal point processes \cite{kdpp2011,dpp2021diversity}. Each point $x_i$ in the dataset is associated with a leverage score defined as $l_i^{\gamma}(K):=(K(K+\gamma I)^{-1})_{i,i}$ with $\gamma>0$ a user-specified parameter. To generate the landmark points, each point $x_i$ is sampled with a probability proportional to $l_i^{\gamma}(K)$. Since computing leverage scores involves the dense kernel matrix $K$ and computing the matrix inverse $(K+\gamma I)^{-1}$, these methods cost at least $O(n^2)$. Recently, several iterative schemes 
have been proposed to accelerate its computations \cite{nys2017,rudi2018}. 
{Different from uniform sampling, those methods require $K$ to be SPSD in order to guarantee the non-negativeness of $l_i^{\gamma}$.}


Another variant of \nys methods is the $k$-means \nys method \cite{nys2008kmeans,nys2010kmeans}. This method performs the $k$-means clustering over the dataset and chooses the cluster centers as the landmark points. 
Similar to uniform sampling, the $k$-means method does not require any access to the kernel matrix. Experiments show that it tends to be more accurate than the uniform \nys method \cite{nys2008kmeans,nys2010kmeans} but still suffers from numerical instability.

{
In existing literature, discussion on the choice of good landmark points that work for \emph{indefinite} kernels has been scarce.
In \cite{nys2015ind,nys2015review}, uniform sampling \cite{nys2001} is used to select landmark points for indefinite kernels. 
\cite{nys2019ind} proposed to first use uniform sampling to obtain a \nys approximation and then apply leverage score method to the \nys approximation to select landmark points.
However, it is not clear how well the original \nys method \cite{nys2001} performs for indefinite kernels in general and how different choices of landmark points affect the approximation accuracy for indefinite kernels.
Moreover, a theoretical study of landmark selection for indefinite kernels is lacking.
We will show that, though \nys approximation can be used for \emph{indefinite} kernels, there are a lot more numerical challenges such as numerical instability, as compared to SPSD kernels.
For indefinite kernels, landmark points must be chosen judiciously to prevent a poor \nys approximation.
}

\section{Error estimates for the general \nys method}
\label{sec:errorbound}
In this section, we derive error estimates for the \emph{general} \nys method, which are valid for all symmetric kernels, including indefinite ones. The only assumption is that the landmark points are chosen from the original dataset. 
The analysis reveals the inherent relation between landmark points and the quality of the corresponding \nys approximation. It serves as the theoretical foundation of the new linear complexity method proposed in Section \ref{sec:anchornet}.



The lemma below will be used in proving the main result in Theorem \ref{thm:errorbound}.
\begin{lemma}
\label{lm:lm1}
    Assume $A$ is an $n$-by-$n$ matrix and $\alpha,\alphahat,\beta,\betahat$ are $n$-by-$1$ vectors.
    Define $\epsilon_1:=\norm{\alphahat-\alpha}$ and $\epsilon_2:=\norm{\betahat-\beta}.$
    Then 
    \begin{equation}
    \label{eq:lm1}
        \abs{\alphahat^T A \betahat - \alpha^T A \beta} 
        \leq \norm{\alpha^T A}\cdot \epsilon_2 + \norm{A\beta}\cdot \epsilon_1
        + \norm{A}\cdot \epsilon_1\epsilon_2.
    \end{equation}
\end{lemma}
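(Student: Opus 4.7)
The plan is to expand the bilinear form $\hat{\alpha}^T A \hat{\beta}$ around the reference pair $(\alpha,\beta)$ by writing $\hat{\alpha} = \alpha + (\hat{\alpha}-\alpha)$ and $\hat{\beta} = \beta + (\hat{\beta}-\beta)$, and then bound each cross term separately via Cauchy--Schwarz together with the operator-norm inequality $\|Mv\| \le \|M\|\,\|v\|$. This is essentially a perturbation/product-rule estimate for bilinear forms, so I would not expect any real obstacle; the work is purely bookkeeping.

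Concretely, I would first substitute the two decompositions into $\hat{\alpha}^T A \hat{\beta}$ and multiply out, obtaining
\begin{equation*}
\hat{\alpha}^T A \hat{\beta} - \alpha^T A \beta
= \alpha^T A (\hat{\beta}-\beta) + (\hat{\alpha}-\alpha)^T A \beta + (\hat{\alpha}-\alpha)^T A (\hat{\beta}-\beta).
\end{equation*}
The first two terms are the linear (first-order) perturbations and the third is the quadratic (second-order) perturbation. Applying the triangle inequality transfers the absolute value inside the sum, so it suffices to bound each of the three terms separately.

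For the first term I would apply Cauchy--Schwarz in the form $|u^T v| \le \|u\|\,\|v\|$ with $u = A^T\alpha$ and $v = \hat{\beta}-\beta$, giving $|\alpha^T A (\hat{\beta}-\beta)| \le \|\alpha^T A\|\,\epsilon_2$. The second term is symmetric and yields $|(\hat{\alpha}-\alpha)^T A\beta| \le \epsilon_1\,\|A\beta\|$. For the third term I would first pull $A$ out using $\|A(\hat{\beta}-\beta)\| \le \|A\|\,\epsilon_2$ and then Cauchy--Schwarz, giving $|(\hat{\alpha}-\alpha)^T A (\hat{\beta}-\beta)| \le \epsilon_1\,\|A\|\,\epsilon_2$. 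Summing the three bounds gives exactly the right-hand side of \eqref{eq:lm1}.

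The only subtlety worth flagging is the notational convention that $\|\alpha^T A\|$ denotes the Euclidean norm of the row vector $\alpha^T A$ (equivalently $\|A^T\alpha\|$); once this is understood, the proof is a one-line triangle inequality applied to the three-term expansion above. No spectral properties of $A$ are used, so the estimate is valid even when $A$ is indefinite or non-symmetric, which is exactly what is needed to feed into the max-norm error bound \eqref{eq0} for the indefinite \nys approximation.
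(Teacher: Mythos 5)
Your proof is correct and follows essentially the same route as the paper: the paper also writes $e_1=\hat{\alpha}-\alpha$, $e_2=\hat{\beta}-\beta$, expands $\hat{\alpha}^T A \hat{\beta} - \alpha^T A \beta = \alpha^T A e_2 + e_1^T A \beta + e_1^T A e_2$, and concludes by the triangle inequality with Cauchy--Schwarz and $\norm{Av}\leq\norm{A}\norm{v}$. Your write-up simply spells out the term-by-term bounds that the paper leaves implicit.
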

\begin{proof}
    Define $e_1:=\alphahat-\alpha$ and $e_2:=\betahat-\beta$.
    Then \eqref{eq:lm1} follows from the fact that
    \[
    \alphahat^T A \betahat - \alpha^T A \beta = \alpha^T A e_2 + e_1^T A \beta + e_1^T A e_2.
    \]
\end{proof}

In the theorem below, we derive a universal error bound for the \nys method. The kernel function is assumed to be symmetric and continuous, not necessarily positive-definite. Unlike existing error estimates, the result below is independent of the specific \nys scheme. The only assumption is that the landmark points belong to the original dataset, which is indeed the case in all \nys schemes except the one based on $k$-means clustering \cite{nys2008kmeans,nys2010kmeans}.
\begin{theorem}
\label{thm:errorbound}
    Let $\kappa(x,y)$ {be a symmetric function e.g.,} $\kappa(x,y)=\kappa(y,x)$.
    Suppose $X=\{x_1,\dots,x_n\}\subset \mathbb{R}^d$ and $K=K_{XX}:=[\kappa(x_i,x_j)]_{i,j=1}^n$.
    If $S=\{z_1,\dots,z_r\}\subset X$, then 
    \begin{equation}
    \label{eq:errorbound}
        \norm{K-K_{XS}K_{SS}^{+}K_{XS}^T}_{\max} \leq E_r + 2\hat{E}_r + \norm{K_{SS}^{+}} \hat{E}_r^2,
    \end{equation}
    where
    \begin{equation}
    \label{eq:EE}
    \begin{aligned}
        E_r := \max_{x,y\in X} \min_{u,v\in S}|\kappa(x,y)-\kappa(u,v)|,\quad
        \hat{E}_r := \max_{x\in X} \min_{u\in S}\left( \sum_{i=1}^r |\kappa(x,z_i)-\kappa(u,z_i)|^2 \right)^{\frac{1}{2}}.
    \end{aligned}
    \end{equation}
\end{theorem}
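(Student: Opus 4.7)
The plan is to bound the max-norm error entrywise and take the maximum over $(i,j)$. Fix indices $i,j$, set $x := x_i$, $y := x_j$, and define $\alpha_x := (\kappa(x,z_1),\ldots,\kappa(x,z_r))^T \in \mathbb{R}^r$ (similarly $\alpha_y$). The $(i,j)$ entry of $K_{XS}K_{SS}^+K_{XS}^T$ is $\alpha_x^T K_{SS}^+ \alpha_y$, so the target is $|\kappa(x,y) - \alpha_x^T K_{SS}^+ \alpha_y|$.

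First, I would establish two auxiliary facts. For $u = z_k$, $v = z_l \in S$, using $\alpha_u = K_{SS} e_k$, $\alpha_v = K_{SS} e_l$, and the Moore--Penrose identity $K_{SS} K_{SS}^+ K_{SS} = K_{SS}$, one obtains $\alpha_u^T K_{SS}^+ \alpha_v = e_k^T K_{SS} e_l = \kappa(u,v)$. Moreover, $K_{SS}^+ \alpha_u = K_{SS}^+ K_{SS} e_k$ is the orthogonal projection of $e_k$ onto $\mathrm{Range}(K_{SS})$, so $\|K_{SS}^+ \alpha_u\| \leq 1$; by symmetry of $K_{SS}^+$ (since $K_{SS}$ is symmetric), $\|\alpha_u^T K_{SS}^+\| \leq 1$ as well.

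Next, for an appropriately chosen pair $(u,v) \in S \times S$, the identity above yields the decomposition
\[
\kappa(x,y) - \alpha_x^T K_{SS}^+ \alpha_y \;=\; \underbrace{[\kappa(x,y) - \kappa(u,v)]}_{T_1} \;+\; \underbrace{[\alpha_u^T K_{SS}^+ \alpha_v - \alpha_x^T K_{SS}^+ \alpha_y]}_{T_2}.
\]
Applying Lemma~\ref{lm:lm1} to $T_2$ with $A = K_{SS}^+$, $\alpha = \alpha_u$, $\alphahat = \alpha_x$, $\beta = \alpha_v$, $\betahat = \alpha_y$, and using the norm bounds from the previous paragraph, gives $|T_2| \leq \|\alpha_x - \alpha_u\| + \|\alpha_y - \alpha_v\| + \|K_{SS}^+\| \cdot \|\alpha_x - \alpha_u\|\cdot \|\alpha_y - \alpha_v\|$.

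To close, I would take $u = u(x)$ and $v = v(y)$ to be the landmarks attaining the minima in the definition of $\hat{E}_r$ for $x$ and $y$, so that $\|\alpha_x - \alpha_u\|, \|\alpha_y - \alpha_v\| \leq \hat{E}_r$; this bounds $|T_2|$ by $2\hat{E}_r + \|K_{SS}^+\| \hat{E}_r^{\,2}$. The first term is bounded by $|T_1| \leq E_r$ via the definition of $E_r$. Combining these and maximizing over the entry $(i,j)$ yields the claimed estimate.

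The main obstacle I expect is the compatibility between the two minimizations: the $\hat{E}_r$-minimizers $(u(x), v(y))$ needed for the sharp bound on $T_2$ are not automatically the pair $(u^*, v^*) \in S \times S$ attaining the $\min$ in the definition of $E_r$ for the specific $(x,y)$, so $|\kappa(x,y) - \kappa(u(x), v(y))|$ need not be $\leq E_r$ a priori. Bridging this gap---either by a consistent tie-breaking among minimizers, or by a more refined two-pair decomposition that uses one landmark pair for $T_1$ and a different one for $T_2$ while controlling the cross terms through $\hat{E}_r$---is the delicate step of the argument.
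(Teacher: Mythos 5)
Your argument is, step for step, the paper's own proof: the same entrywise reduction to $|\kappa(x,y)-K_{xS}K_{SS}^{+}K_{yS}^{T}|$, the same exact-reproduction identity $\kappa(u,v)=K_{uS}K_{SS}^{+}K_{vS}^{T}$ on $S$, the same application of Lemma \ref{lm:lm1} with $A=K_{SS}^{+}$, and the same projection argument giving $\norm{\alpha^{T}K_{SS}^{+}}\leq 1$ and $\norm{K_{SS}^{+}\beta}\leq 1$. So there is nothing different on the main line of the argument.

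The ``obstacle'' you flag at the end deserves comment, because it is not an artifact of your write-up: it is exactly the step the paper passes over. After obtaining \eqref{eq:syuvBound}, which holds for every pair $(u,v)\in S\times S$, the paper ``minimizes the upper bound over all $u,v\in S$'' and records the result in \eqref{eq:minuv} as the sum of the separate minima. But for a sum of functions one only has $\min_{u,v}\bigl[A(u,v)+B(u)+C(v)\bigr]\geq \min_{u,v}A+\min_{u}B+\min_{v}C$; the inequality needed in \eqref{eq:minuv} points the other way and fails in general, precisely because the minimizer of $|\kappa(x,y)-\kappa(u,v)|$ need not coincide with the minimizers of $\epsilon_{1}$ and $\epsilon_{2}$. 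If one instead commits to $u=u(x)$ and $v=v(y)$ minimizing $\epsilon_{1}$ and $\epsilon_{2}$, as you propose for the $T_{2}$ bound, the $\hat{E}_{r}$ terms come out right but the first term becomes $\max_{x,y}|\kappa(x,y)-\kappa(u(x),v(y))|$, which is not obviously bounded by $E_{r}$. So your proposal reproduces the paper's proof together with its one genuine gap, and you were right to single that step out. A clean repair is either to restate the bound with $E_{r}$ replaced by $\max_{x,y}|\kappa(x,y)-\kappa(u(x),v(y))|$ (equivalently, by $\max_{x,y}\min_{u,v}$ of the entire bracket in \eqref{eq:syuvBound}), or to add a regularity hypothesis under which a single nearest-landmark choice controls both terms simultaneously --- this is in effect what happens in Corollary \ref{cor:errorbound}, whose Lipschitz argument picks one $z_{x}$ per $x$ and bounds $E_{r}$ and $\hat{E}_{r}$ coherently through $\delta_{X,S}$.
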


\begin{proof}
Define $R=X\backslash S$. 
Since $S\subset X$, for some permutation matrix $P$, there holds
 \[
    K-K_{XS}K_{SS}^{+}K_{XS}^T = P \begin{bmatrix}
        O & O \\
        O & K_{RR}-K_{RS}K_{SS}^{+}K_{RS}^T
    \end{bmatrix} P^T.
 \]
Consequently,
 $\norm{K-K_{XS}K_{SS}^{+}K_{XS}^T}_{\max} = \norm{K_{RR}-K_{RS}K_{SS}^{+}K_{RS}^T}_{\max}.$
It suffices to estimate the difference below
\begin{equation}
\label{eq:diff}
    \kappa(x,y)-K_{xS} K_{SS}^{+} K_{yS}^T, 
\end{equation}
where 
\[
    K_{xS} := \begin{bmatrix}
        \kappa(x,z_1) & \cdots & \kappa(x,z_r)
    \end{bmatrix}\quad \text{for any }\; x\in\mathbb{R}^d.
\]
Note that for any $u,v\in S$,
\begin{equation}
\label{eq:kzrewrite}
   \kappa(u,v) =  
    K_{u S}
    K_{SS}^{+}
    K_{v S}^T
\end{equation}
because $K_{SS}K_{SS}^{+}K_{SS}=K_{SS}$ and $K_{SS}=K_{SS}^T$.
Define the column vectors
\begin{equation}
\label{eq:abdef}
    \alpha := K_{u S}^T,\quad 
    \alphahat:= K_{x S}^T,\quad
    \beta :=K_{v S}^T,\quad
    \betahat:=K_{y S}^T
\end{equation}
and the scalars
\begin{equation}
\label{eq:ee12def}
\begin{aligned}
    \epsilon_1 &:= ||\alphahat-\alpha||=\left( \sum_{i=1}^r |\kappa(x,z_i)-\kappa(u,z_i)|^2 \right)^{\frac{1}{2}},\\
    \epsilon_2 &:= ||\betahat-\beta||=\left( \sum_{i=1}^r |\kappa(y,z_i)-\kappa(v,z_i)|^2 \right)^{\frac{1}{2}}.
\end{aligned}
\end{equation}
We can then use \eqref{eq:kzrewrite} and \eqref{eq:abdef} to rewrite \eqref{eq:diff} as
\begin{equation}
\label{eq:diffrewrite} 
    \kappa(x,y) - \alphahat^T K_{SS}^{+} \betahat 
    = (\kappa(x,y) - \kappa(u,v)) +( \alpha^T K_{SS}^{+} \beta - \alphahat^T K_{SS}^{+} \betahat),
\end{equation}
where $u,v\in S$ can be arbitrary.

{The second part on the right-hand side of \eqref{eq:diffrewrite} can be estimated as follows by using Lemma \ref{lm:lm1}:
\begin{equation}
\label{eq:term2}
\begin{aligned}
        \abs{\alphahat^T K_{SS}^{+} \betahat - \alpha^T K_{SS}^{+} \beta} 
        &\leq \norm{\alpha^T K_{SS}^{+}} \epsilon_2 + \norm{K_{SS}^{+}\beta} \epsilon_1
        + \norm{K_{SS}^{+}} \epsilon_1\epsilon_2\\
        &\leq \epsilon_2+\epsilon_1+\norm{K_{SS}^{+}} \epsilon_1\epsilon_2.
\end{aligned}
\end{equation}
Here, the last inequality is due to the fact that both 
$\alpha^T K_{SS}^{+}=K_{uS}K_{SS}^{+}$ and $K_{SS}^{+}\beta=K_{SS}^{+}K_{Sv}$ are row or column of the matrix 
\[
    K_{SS}K_{SS}^{+} = K_{SS}^{+}K_{SS} = U\begin{bmatrix}
        e_1 & & \\
          & \ddots & \\
          &  &  e_r
    \end{bmatrix}U^T,
\]
where $U$ is an orthogonal matrix and $e_i\in \{0,1\}$.} {As a result, $\norm{\alpha^T K_{SS}^{+}}\leq 1$ and $\norm{K_{SS}^{+}\beta}\leq 1$. }

Finally, \eqref{eq:term2} and \eqref{eq:diffrewrite} imply the following estimate:
\begin{equation}
\label{eq:syuvBound}
    \abs{\kappa(x,y) - \alphahat^T K_{SS}^{+} \betahat} 
    \leq \abs{\kappa(x,y) - \kappa(u,v)} + \epsilon_1+\epsilon_2+\norm{K_{SS}^{+}} \epsilon_1\epsilon_2,
\end{equation}
which holds for any $u,v\in S$.
Minimizing the upper bound in \eqref{eq:syuvBound} over all $u,v\in S$ immediately yields
\begin{equation}
\label{eq:minuv}
    \abs{\kappa(x,y) - \alphahat^T K_{SS}^{+} \betahat} 
    \leq \min_{u,v\in S} \abs{\kappa(x,y) - \kappa(u,v)} + \min_{u\in S}\epsilon_1+\min_{v\in S}\epsilon_2+\norm{K_{SS}^{+}} \min_{u\in S}\epsilon_1\cdot \min_{v\in S}\epsilon_2,
\end{equation}
where $\epsilon_1, \epsilon_2$ are defined in \eqref{eq:ee12def}.
The proof is completed by taking a maximum of the upper bound in \eqref{eq:minuv} over $x,y\in X$.
That is,
\[
\begin{aligned}
    \abs{\kappa(x,y) - \alphahat^T K_{SS}^{+} \betahat} 
    &\leq \max_{x,y\in S}\min_{u,v\in S} \abs{\kappa(x,y) - \kappa(u,v)} + \max_{x\in X}\min_{u\in S}\epsilon_1+\max_{y\in X}\min_{v\in S}\epsilon_2\\
    &+\norm{K_{SS}^{+}} \max_{x\in X}\min_{u\in S}\epsilon_1\cdot \max_{y\in Y}\min_{v\in S}\epsilon_2\\
    &= E_r + 2\hat{E}_r + \norm{K_{SS}^{+}}\hat{E}_r^2.
\end{aligned}
\]
\end{proof}

\begin{remark}
\label{rm:errorbound}
Note that Theorem \ref{thm:errorbound} only requires the kernel function to symmetric. Thus the result applies to a broad class of kernels, including SPSD kernels like Gaussian, or more generally Mat{\'e}rn kernels, and indefinite kernels, such as  multiquadrics, thin plate spline, sigmoid kernel, etc. 
\end{remark}

We call $E_r$ and $\hat{E}_r$ in Theorem \ref{thm:errorbound} the bivariate and univariate \emph{kernelized marking errors},
respectively,
as both quantities are measured in terms of either bivariate or univariate kernel function evaluations and indicate the overall capacity of the landmark points $S$ to approximate the dataset $X$. {There are two variables that affect the approximation error of \nys method: the number of the landmark points $r$ and the set of landmark points $S$.
Here we focus on how to choose landmark points $S$ when $r$ is fixed.
In this case, both the quantities $\hat{E}_r$ and $E_r$ can be used to investigate how different choices of landmark points $S$ would impact the \nys approximation. 
If $r$ is viewed as a variable, then $\hat{E}_r$ may or may not grow as $r$ increases. 
Consider the one dimensional toy problem where $\kappa(x,y)=|x-y|^2$,
$X=\{x_i\}_{i=0}^{10} = \{\frac{i}{10}\}$, $S_1=\{x_{10}\}$, $S_2=\{x_0,x_{10}\}$.
Let $\hat{E}_k$ denote the quantity for $S=S_k$.
Then it can be computed that $\hat{E}_1 = 1$ (achieved at $x=x_0$)
and $\hat{E}_2 = \frac{1}{2\sqrt{2}}\approx 0.35$ (achieved at $x=x_5$). In this case, $\hat{E}_r$ does decay as $r$ increases.
 If we further assume the kernel function $k(x,y)$ is Lipschitz continuous, then $|k(x,y)-k(u,v)|$ will be small if $(x,y)$ and $(u,v)$ are close. Under this assumption, the distance between points reflects the difference between the respective kernel evaluations.}
Hence the set of {a fixed number of} landmark points $S$ is considered good if it is able to minimize the deviation from $X$, namely, making $\dist{x,S}$ small for each point $x\in X$. {A similar result has recently been conducted in \cite{bauer2020kernelindependent}, which shows the exponential convergence of Adaptive Cross Approximation (ACA) \cite{DBLP:journals/nm/Bebendorf00} with respect to the fill-distance of pivoting points. We also want to emphasize that the estimate \eqref{eq:errorbound} is mainly used to motivate the selection of $S$ rather than to select the number of the points in $S$ in order to satisfy certain approximation accuracy.}

{In the next corollary, we further show that the approximation error can be bounded by $\max\limits_{x\in X}\dist{x,S}$ when the kernel function $\kappa$ is Lipschitz continuous.}
\begin{corollary}
\label{cor:errorbound}
Under the assumption of Theorem \ref{thm:errorbound}, if $\kappa(x,y)\in C(\mathbb{R}^d\times \mathbb{R}^d)$ is Lipschitz continuous, i.e, $|\kappa(x',y')-\kappa(x,y)|\leq L (|x-x'|^2 + |y-y'|^2)^{1/2}$ with Lipschitz constant $L$, then 
\begin{equation}
    \label{eq:errorboundLip}
        \norm{K-K_{XS}K_{SS}^{+}K_{XS}^T}_{\max} \leq \sqrt{2}L\delta_{X,S} + 2\sqrt{r}L\delta_{X,S} + \norm{K_{SS}^{+}} rL^2\delta_{X,S}^2,
    \end{equation}
    where $\delta_{X,S} = \max\limits_{x\in X}\dist{x,S}$.
\end{corollary}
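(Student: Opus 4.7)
The plan is to invoke Theorem~\ref{thm:errorbound} directly and then bound the two kernelized marking errors $E_r$ and $\hat{E}_r$ from \eqref{eq:EE} in terms of the geometric quantity $\delta_{X,S}=\max_{x\in X}\dist{x,S}$, using only Lipschitz continuity of $\kappa$. Since Theorem~\ref{thm:errorbound} already furnishes an error bound of the shape $E_r+2\hat{E}_r+\norm{K_{SS}^{+}}\hat{E}_r^{2}$, it suffices to show
\[
E_r \le \sqrt{2}\,L\,\delta_{X,S}, \qquad \hat{E}_r \le \sqrt{r}\,L\,\delta_{X,S},
\]
and then substitute. The three resulting terms match the three terms in \eqref{eq:errorboundLip} term by term.

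For the bivariate error $E_r$, I would fix $x,y\in X$, let $u\in S$ attain $\dist{x,S}$ and let $v\in S$ attain $\dist{y,S}$, so $|x-u|,|y-v|\le\delta_{X,S}$. Applying the joint Lipschitz hypothesis on $\mathbb{R}^d\times\mathbb{R}^d$ then gives
\[
\abs{\kappa(x,y)-\kappa(u,v)}\le L\bigl(|x-u|^{2}+|y-v|^{2}\bigr)^{1/2}\le \sqrt{2}\,L\,\delta_{X,S},
\]
and this choice of $(u,v)$ is admissible in the $\min$, so $E_r\le\sqrt{2}L\delta_{X,S}$.

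For the univariate error $\hat{E}_r$, I would fix $x\in X$ and pick the same nearest neighbor $u\in S$ with $|x-u|\le\delta_{X,S}$. The key observation is that $u$ is held fixed while $z_i$ varies inside the sum, so applying Lipschitz continuity with second argument $z_i$ unchanged (i.e., $y=y'=z_i$ in the hypothesis) yields $\abs{\kappa(x,z_i)-\kappa(u,z_i)}\le L|x-u|\le L\delta_{X,S}$ for every $i=1,\dots,r$. Summing $r$ such squared terms and taking the square root gives $\bigl(\sum_{i=1}^{r}L^{2}\delta_{X,S}^{2}\bigr)^{1/2}=\sqrt{r}\,L\,\delta_{X,S}$, which upper-bounds the inner minimum.

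There is essentially no serious obstacle: the argument is just a clean application of Lipschitz continuity once one notices that in $\hat{E}_r$ the minimizer $u$ must be chosen independently of $i$ (so the nearest-neighbor choice works uniformly across the sum) and that the factor $\sqrt{r}$ arises precisely from summing $r$ identical upper bounds. Plugging the two bounds into \eqref{eq:errorbound} and squaring $\hat{E}_r$ for the last term yields \eqref{eq:errorboundLip} exactly.
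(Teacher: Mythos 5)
Your proposal is correct and follows essentially the same route as the paper's own proof: both invoke Theorem~\ref{thm:errorbound} and then bound $E_r$ and $\hat{E}_r$ by choosing, for each $x\in X$, its nearest landmark point and applying the Lipschitz hypothesis (with the second argument frozen at $z_i$ for $\hat{E}_r$), yielding the factors $\sqrt{2}$ and $\sqrt{r}$ exactly as in \eqref{eq:errorboundLip}. The only cosmetic difference is that the paper manipulates the squared quantities $E_r^2$ and $\hat{E}_r^2$ while you work with the square roots directly.
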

\begin{proof}
    The proof relies on \eqref{eq:errorbound} and it suffices to relate $E_r$, $\hat{E}_r$ to $\max\limits_{x\in X}\dist{x,S}$.
    First we estimate $E_r$. For each $x\in X$, choose $z_x\in S$ to be the nearest point to $x$.
    Then it follows that 
\[
    E_r^2 \leq L^2 \max_{x,y\in X} (|x-z_x|^2+|y-z_y|^2) = 
    2L^2 \max_{x\in X} |x-z_x|^2 = 2 L^2 \max_{x\in X} \dist{x,S}^2.
\]
    Similarly, for $\hat{E}_r$, we deduce that
\[
    \hat{E}_r^2 \leq L^2 \max_{x\in X} \sum_{i=1}^r |x-z_x|^2 = 
    L^2 \max_{x\in X} r |x-z_x|^2 =
    r L^2 \max_{x\in X} \dist{x,S}^2.
\]
    The two estimates and \eqref{eq:errorbound} immediately imply \eqref{eq:errorboundLip} and the proof is complete.
\end{proof}

It can also be seen from \eqref{eq:errorboundLip} that, to achieve a better approximation, landmark points are encouraged to spread over the entire dataset to capture its geometry, thus reducing $\delta_{X,S}$. Roughly speaking, this means that any point in $X$ is not ``too far" from a landmark point in $S$. In fact, this principle can also lead to a submatrix $K_{SS}$ with a relatively large numerical rank in general, an improved numerical stability and accuracy.
In two and three dimensions, one way to generate evenly spaced samples is to use farthest point sampling (FPS) \cite{FPS97}. {FPS constructs a subset $S$ of $X$ by first initializing $S$ with one point and then sequentially adding to $S$ a point in $X\backslash S$ that is farthest from $S$.} However, in high dimensions, the method tends to sample points on the boundary of the dataset and may ignore the interior of the dataset unless the number of samples is large enough. Computationally, the sequential procedure of FPS can be quite expensive in high dimensions since each step requires solving a minimization problem over $O(n)$ points and the overall complexity is $O(m^2 n)$ for generating $m$ landmark points, which is not optimal in $m$.
We present in the next section an efficient, fully parallelizable algorithm with linear complexity in $m$ and $n$ to generate the desired subset $S$.

Note that several existing work has analyzed low-rank approximations associated kernel matrices based on analytic approximation of the kernel function \cite{hack1989,ubt_eref6643,10.1016/j.jat.2010.04.012,Townsend2015ContinuousAO}. Although these results are independent of the positive definiteness of the kernel function, they are restricted to low dimensions because of the curse of dimensionality associated with analytic techniques. That is, the number of terms in an analytic approximation increases \emph{exponentially} with the dimension and the resulting matrix approximation is \emph{not} low-rank for high dimensional problems. {In the context of integral equations, a popular method called adaptive cross approximation (ACA) \cite{DBLP:journals/nm/Bebendorf00} serves as a column-pivoted LU factorization. Thus it is able to perform low-rank factorization for kernel matrix with high dimensional data in linear complexity.}



\begin{remark}
One may use a different norm to measure the approximation error. The set of optimal landmark points that minimize the error bound may differ, depending on the underlying norm. A detailed investigation on how the norm affects the choice of landmark points will be discussed in a forthcoming paper. The analysis in this section aims to provide an intuitive understanding of the desired qualities of landmark points, which will then serve as a theoretical guidance for choosing landmark points.  
\end{remark}

\begin{remark}
    It should be pointed out that directly minimizing the bound in Theorem \ref{thm:errorbound} is \emph{not} a practical way to generate $S$ due to the high computational cost, for example, $O(n^2)$ in computing $E_r$ in \eqref{eq:EE}.
    Our goal is to design a fast algorithm (with optimal complexity) for generating landmark points with good quality.    
    Thus the error bound is used as a theoretical guidance for designing more efficient algorithms on generating landmark points $S$.
\end{remark}

\section{Anchor net method} 
\label{sec:anchornet}
In this section, we introduce the anchor net method to facilitate the selection of landmark points.
From the analysis in Section \ref{sec:errorbound},
we see that landmark points that spread evenly in the dataset and contain no clumps are more favorable in reducing the Nystrom approximation error.
If the dataset is the unit cube,
then the uniform grid points satisfy the desired properties. 
In general, the study of uniformity is a central topic in discrepancy theory for solving high dimensional problems. 
The discrepancy of a given point set measures how far the distribution deviates from the uniform one.
Existing work on discrepancy theory all focuses on distribution in the unit cube,
while distribution in a general region has not been investigated yet, theoretically or computationally. 
In Section \ref{sub:Anchor Nets}, we review low discrepancy sets and give the definition of discrepancy for a \emph{general} region instead of the unit cube.
Based on low discrepancy sets, the anchor net is introduced in Section \ref{sub:ANlandmark}, which is able to capture the geometry of the given dataset. 
In Section \ref{sub:ANN}, we use anchor net to design a linear complexity landmark-point selection algorithm.
Discussion on implementation details  is provided in  Section \ref{subsec:implementation}. 


\subsection{Low discrepancy sets} 
\label{sub:Anchor Nets}
We start with the concept of low discrepancy sets.
Roughly speaking, a dataset with low discrepancy contains points that spread evenly in the space, with almost no local accumulations. There are several kinds of discrepancies \cite{UD2012book} and
the most widely used one is the star discrepancy, as defined below. 
\begin{definition}
\label{def:DN}
The star discrepancy $D^*_N(\annt)$ of $\annt=\{x_1,\dots,x_N\}$
$\subset [0,1]^d$ is defined by 
\[
    D^*_N(\annt):= \sup_{J\in\mathcal{J}_1} |\#(\annt\cap J)/N-\lambda(J)|,
\]
where $\mathcal{J}_1$ is the family of all boxes in $[0,1)^d$ of the form $\prod_{i=1}^d [0,a_i)$ {and $\lambda(J)$ denotes the Lebesgue measure of $J$.}
\end{definition}

Low discrepancy sets have been studied in a number of literature as a means of generating quasi-random sequences \cite{halton,sobol,quasirandombook,UD2012book}. The most widely used ones include Halton sequences \cite{halton}, digital nets and digital sequences \cite{sobol,digitalnet1987,digitalnet2010book}. 
They are known to have low discrepancies in the sense that 
\[
D_N^*(\annt_N)=O(N^{-1}(\log N)^d),
\]
where $\annt_N$ denotes the first $N$ terms of a Halton sequence or a digital sequence \cite{quasirandombook,UD2012book}. 
{Note that uniform tensor grids are also representative low discrepancy sets but are not popular in practice due to the curse of dimensionality.
We present adaptive tensor grids in Section \ref{subsec:implementation} to alleviate the issue,
which allows the practical use of tensor grids in high dimensions.}

The above low discrepancy sets themselves are only defined for the unit cube and are inefficient in tessellating a real dataset whose ``shape" may not be regular. Therefore, we introduce what we call the \emph{anchor net} in Section \ref{sub:ANlandmark}, which is built upon a collection of low discrepancy sets adjusted to the structure of the dataset. Loosely speaking, anchor nets can be viewed as generalized low discrepancy sets dictated by and specific to the given dataset. In order to measure the uniformity of a dataset in a general region,
we first generalize Definition \ref{def:DN} below.
\begin{definition}
\label{def:genDN}
Let $\annt=\{x_1,\dots,x_N\}\subset [0,\infty)^d$ and $\Omega$ be a bounded measurable set in $[0,\infty)^d$
such that $\lambda(\Omega)>0$ and $\annt\subset\Omega$. 
The generalized star discrepancy $\DNOA$ of $\annt$ in $\Omega$ is defined by
\[
    \DNOA = \sup_{J\in\mathcal{J}} |\#(\annt\cap J)/N-\lambda(\Omega\cap J)/\lambda(\Omega)|,
\]
where $\mathcal{J}$ is the family of all boxes in $[0,\infty)^d$ of the form $\prod_{i=1}^d [0,a_i)$.
\end{definition}
Note that the generalized star discrepancy $\DNOA$ coincides with the standard one $D^*_N(\annt)$
if $\annt\subset\Omega=[0,1)^d$.
{Given an arbitrary dataset, finding a region $\Omega$ that contains the dataset and {reflects} the geometry of the data will be beneficial in generating efficient  samples that can effectively minimize the approximation error.
However, the perfect region is extremely challenging to find in general since the discrete dataset can be \emph{arbitrary}.
In Section \ref{sub:ANlandmark}, we introduce \emph{anchor nets} as a computationally efficient way for constructing such a region $\Omega$.
Anchor nets will be used in Section \ref{sub:ANN}
to facilitate the selection of landmark points with linear complexity.
}

\subsection{Anchor nets} 
\label{sub:ANlandmark}

In this section, we present an efficient algorithm to construct the so-called \emph{anchor net} for a given dataset. We then verify two major properties of the anchor net: it is able to capture the entire dataset and it has low discrepancy. 
As discussed in Section \ref{sec:errorbound}, good landmark points are expected to spread over the entire dataset without forming clumps. The anchor net is designed to achieve this goal by leveraging discrepancy theory \cite{digitalnet1987,digitalnet2010book}, where one tries to construct low discrepancy sequences (deterministically) in order to avoid clumps that are frequently found in pure random sequences. Low discrepancy sequences can achieve faster convergence than pure random sequences in Monte Carlo methods \cite{QMC1992book,quasirandombook,digitalnet2010book}. Intuitively, one can view the anchor net as the counterpart of low discrepancy sequence and uniform sampling as the counterpart of random sequence.



{The anchor net can be considered as a two-level low discrepancy set. The first level is used to decompose the dataset into smaller subsets and the second level is used to generate ``anchors". The construction procedure is sketched in Algorithm \ref{alg:anchornet}. The inputs are the dataset and a net size $m$.  
Line 1 first generates a low discrepancy set $\mathcal{T}$ of a given size in the smallest box $B_0$ that contains $X$. 
Lines 3--6 decompose $X$ into smaller subsets $G_i$'s.
Lines 8--10 then construct a low discrepancy set $\annt^{(i)}$ for each (non-empty) $G_i$. 
Here we choose the number of points in $\annt^{(i)}$ to be  equal to $\ceil{{m*\lambda(B_i)/\sum_{i}\lambda(B_{i})}}$
based on the following guideline that the size of $\annt^{(i)}$ is proportional to $\lambda(B_i)$
 \begin{equation}
    \label{eq:picond}
         \frac{M_i}{\sum_{i=1}^Q M_i} = \frac{\lambda(B_i)}{\sum_{i=1}^Q \lambda(B_i)}
    \end{equation}
    where $M_i=\# \annt^{(i)}$.
}
This guideline is necessary for proving the property of Anchor nets in Theorem \ref{thm:anchornet}.

\begin{algorithm}[tbhp]
	\caption{\it Anchor net construction}
	\emph{Input:} Given dataset $X=\{x_{1},\dots,x_{n}\}\subset \mathbb{R}^d$ with $n$ data points, {net size $m$}\\
	\emph{Output:} Anchor net $\anntXp$
		\label{alg:anchornet}
        \begin{algorithmic}[1]
		\STATE Create a low discrepancy set $\mathcal{T}=\{t_{1},t_{2},\ldots,t_{s}\}$ with $s=O(m)$ points in the smallest rectangular box $B_0$ that contains $X$
		\STATE Initialize $G_i=\{\}$ for $i=1,2,\dots,s$.
		\FOR{$j=1,2,\dots,n$}
		\STATE Find index $i$ such that  $i=\argmin\limits_{k=1,\dots,s}\Vert x_j-t_k\Vert_{\infty}$  
		\STATE Update $G_i=G_i\cup\{x_j\}$
		\ENDFOR
		\STATE Check the number of nonempty $G$-sets: $G_1,\dots,G_Q$
		\FOR{$i=1,2,\dots,Q$}
		\STATE Find the smallest closed box $B_i$ that contains $G_i$
		and compute its Lebesgue measure $\lambda(B_i)$
		\ENDFOR
		\STATE Choose $Q$ low discrepancy sets $\annt^{(1)}\subset B_1,\dots,\annt^{(Q)}\subset B_Q$ such that $\#\annt^{(i)}=\ceil{ {m*\lambda(B_i)/\sum_{i}\lambda(B_{i})}}$
		\RETURN $\anntXp = \bigcup\limits_{i=1}^Q \anntpi$  
	\end{algorithmic}
\end{algorithm}
In Steps 1 and 11, the choice of the particular low discrepancy set is determined by the user. Options include Halton sequences, digital nets, tensor grids, etc. More details on the  implementation of Algorithm \ref{alg:anchornet} are discussed in Section \ref{subsec:implementation}. See Figure \ref{fig:AN} for an illustration of anchor nets with increasing net size $m$ constructed for a 2D  highly non-uniform synthetic dataset.

\begin{figure}
    \centering
    \includegraphics[scale=0.17]{./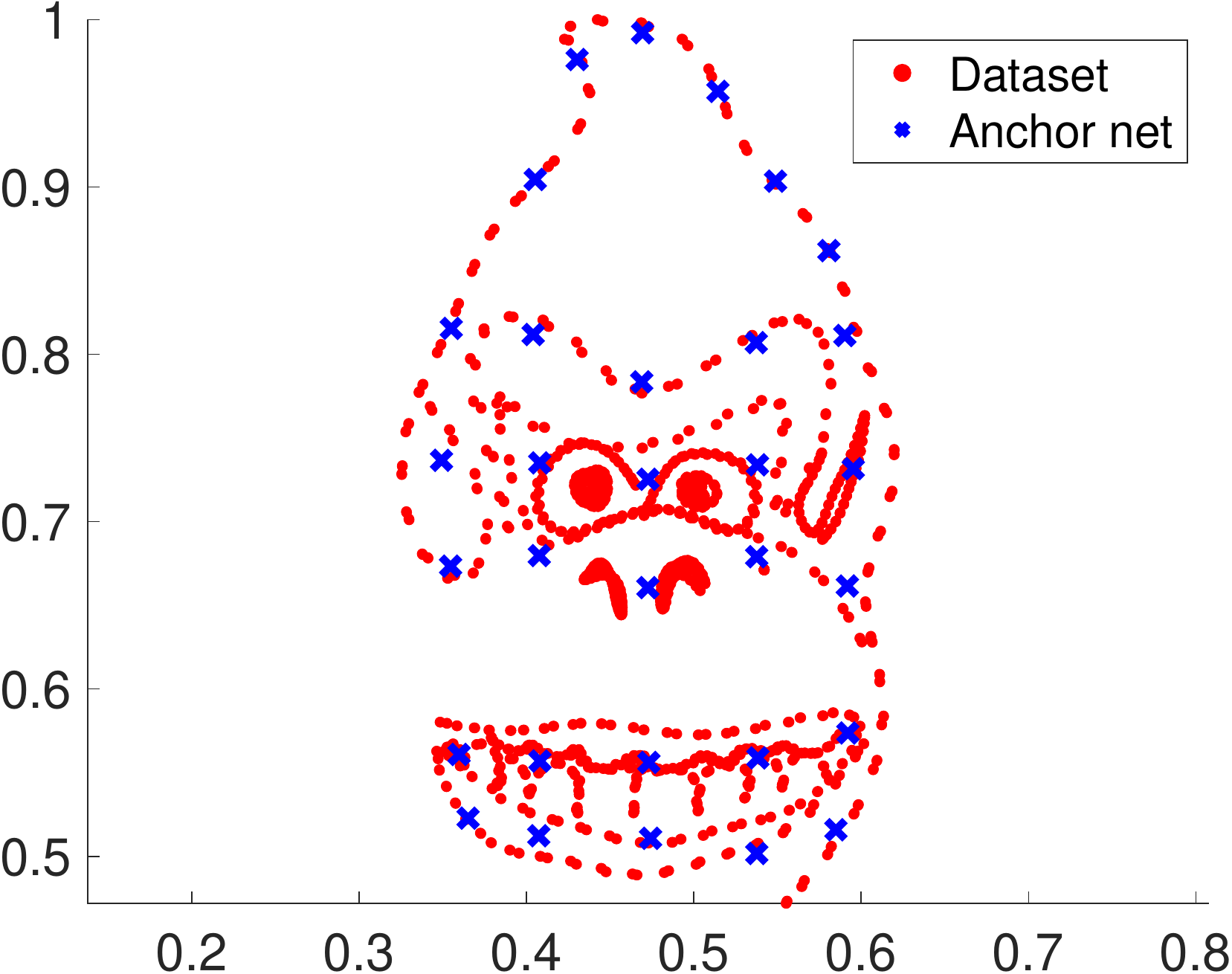}
    \includegraphics[scale=0.17]{./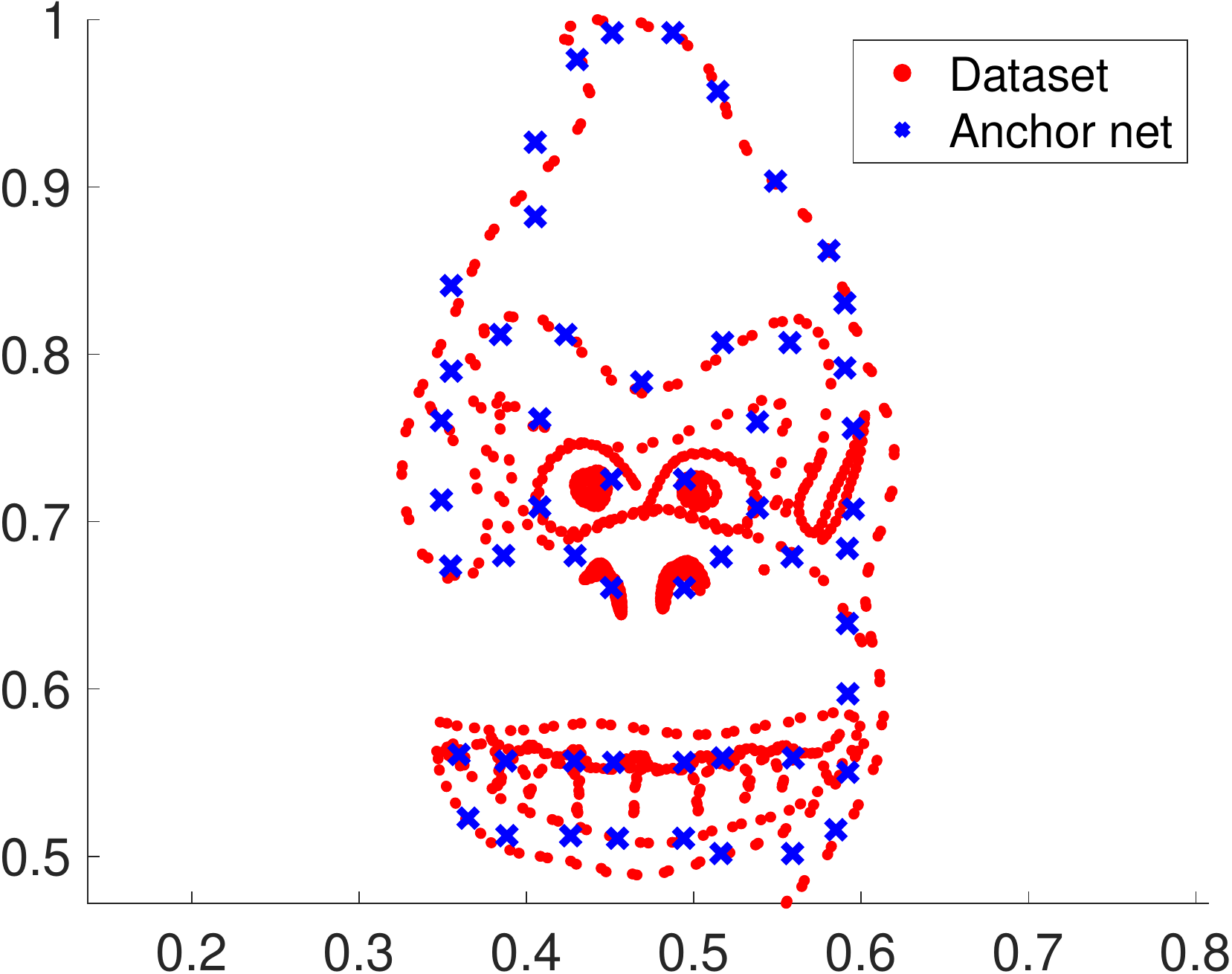}
    \includegraphics[scale=0.17]{./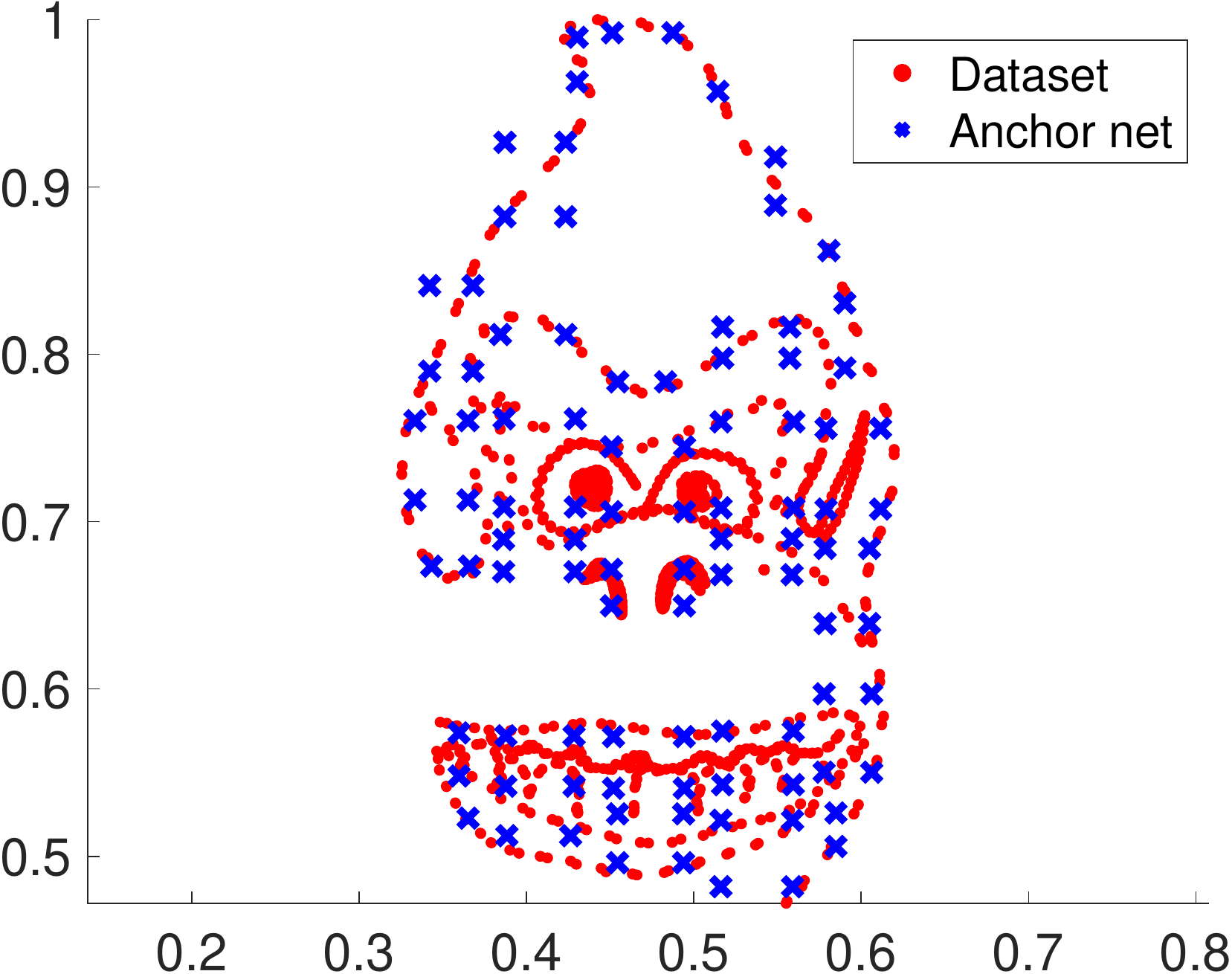}
    \includegraphics[scale=0.17]{./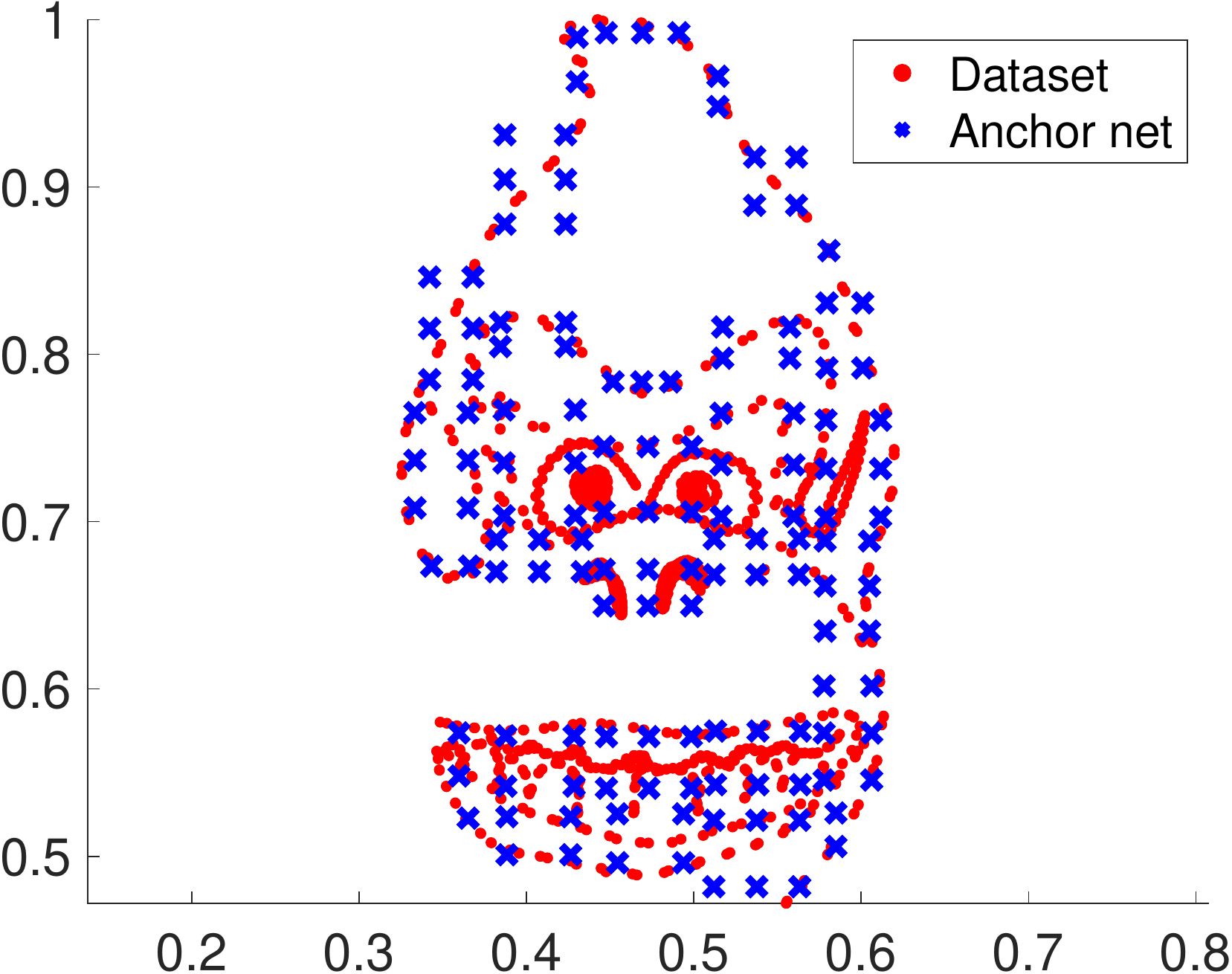}
    \includegraphics[scale=0.17]{./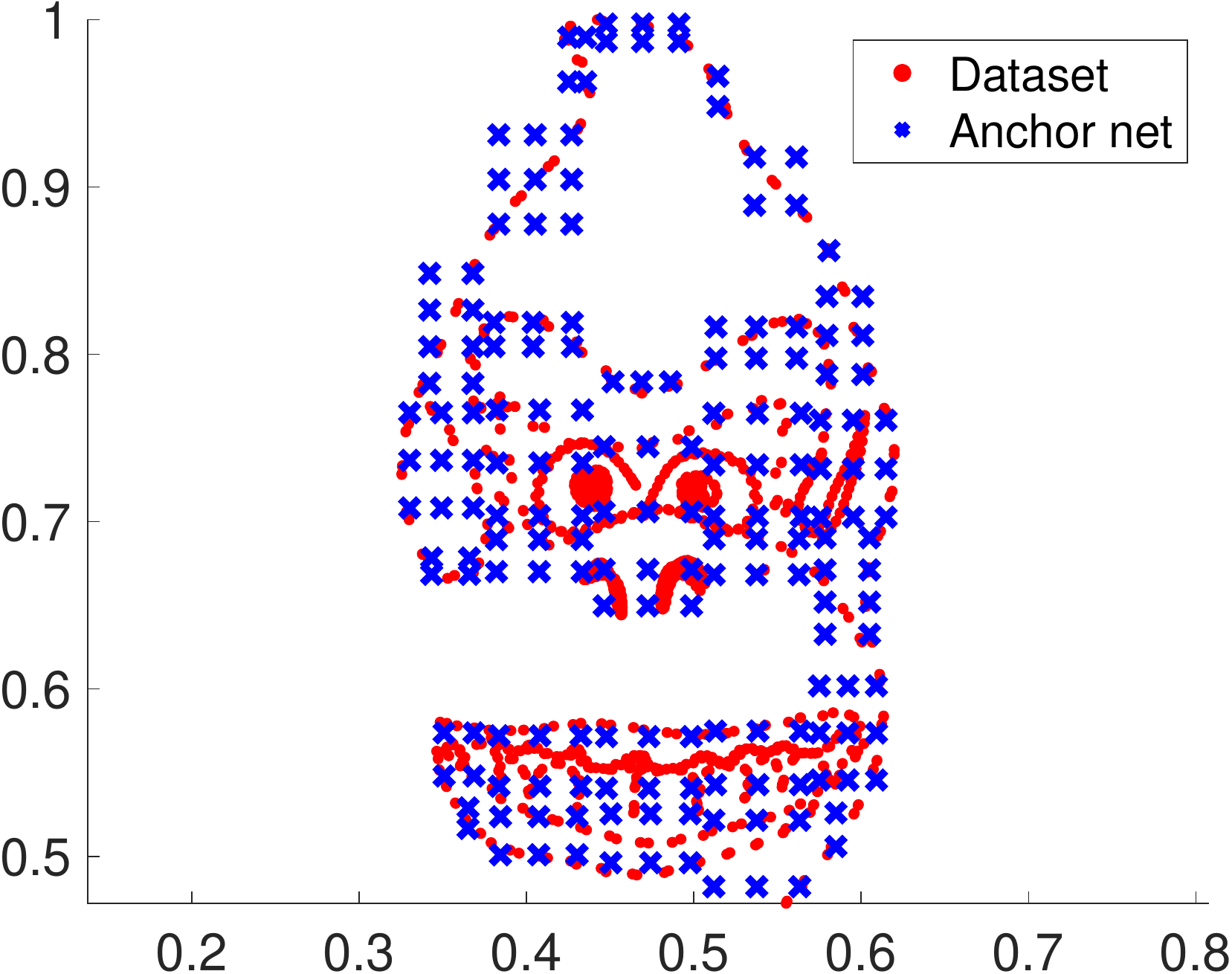}
    \includegraphics[scale=0.17]{./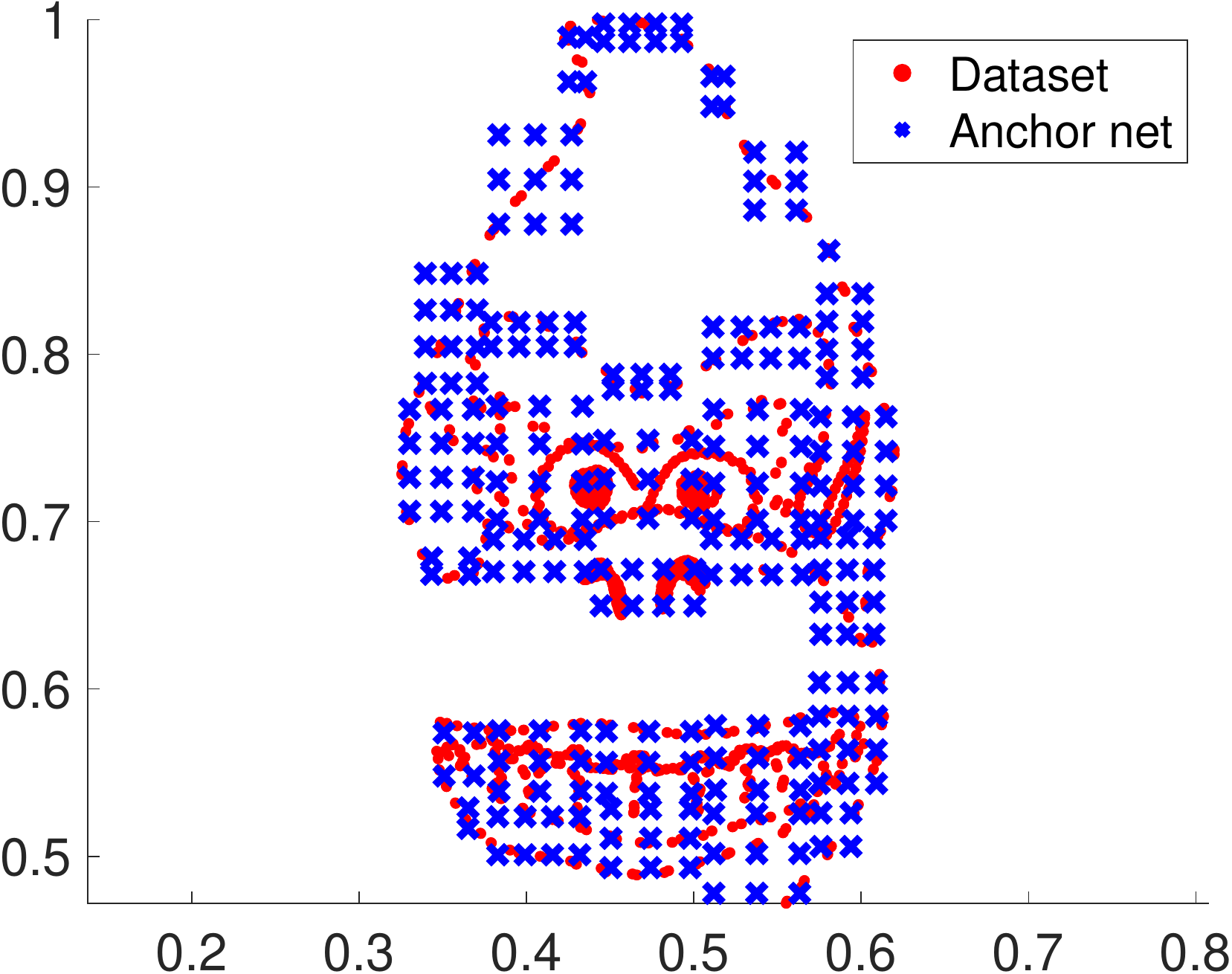}
    \includegraphics[scale=0.17]{./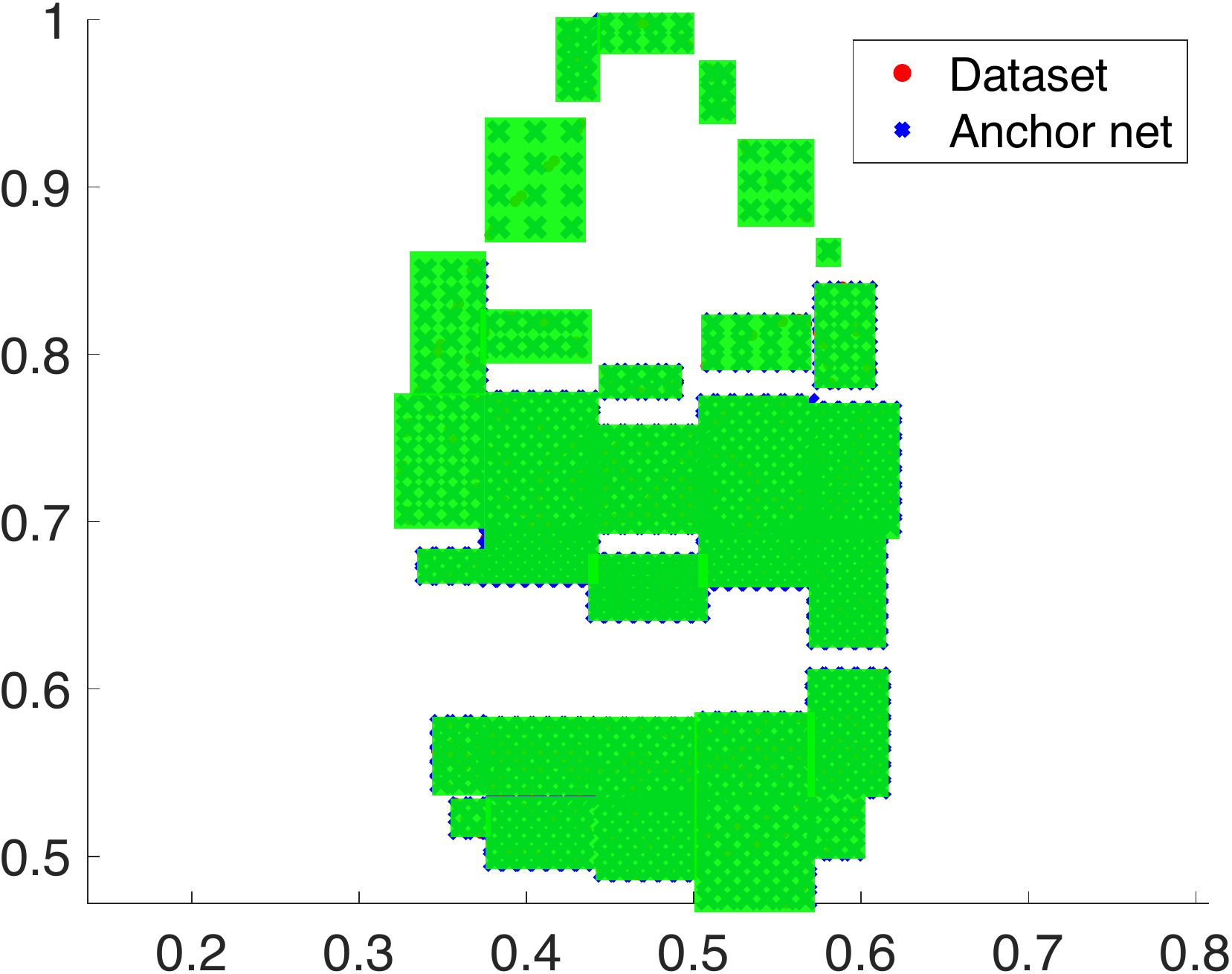}
    \caption{An illustration of six anchor nets (blue 'x') of increasing net sizes $m$ and $\Omega =\bigcup\limits_{i=1}^{Q} B_i$ (green) on a  highly non-uniform synthetic dataset (red dots).}
    \label{fig:AN}
\end{figure}

{Next we prove major properties of $\anntXp$ returned by Algorithm \ref{alg:anchornet}. 
The main result is stated in Theorem \ref{thm:anchornet}.

First we prove the following lemma.}
\begin{lemma}
\label{lm:Omega}
    For $i=1,\dots,Q$, define
    \[
        A_{\epsilon}^{(i)} := \limsup_{M_i\to\infty} \left\{ x\in\mathbb{R}^d: \distinf{x,\anntpi}\leq\epsilon  \right\},
    \]
    where $M_i=\#\anntpi$.    Then $\bigcap\limits_{\epsilon>0} A_{\epsilon}^{(i)}=B_i$.
\end{lemma}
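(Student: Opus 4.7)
The plan is to prove the two inclusions $B_i\subseteq\bigcap_{\epsilon>0}A_\epsilon^{(i)}$ and $\bigcap_{\epsilon>0}A_\epsilon^{(i)}\subseteq B_i$ separately, by unwinding the set-theoretic $\limsup$ as $\bigcap_{N}\bigcup_{M_i\ge N}\{x:\distinf{x,\anntpi}\le\epsilon\}$ and exploiting two standard facts: that $B_i$ is a closed box, and that a low discrepancy sequence $\anntpi$ with $M_i\to\infty$ points in $B_i$ becomes dense in $B_i$ (since its star discrepancy in $B_i$ tends to zero, any open sub-box of $B_i$ must eventually contain a point of $\anntpi$).

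For the inclusion $B_i\subseteq\bigcap_{\epsilon>0}A_\epsilon^{(i)}$, I would fix $x\in B_i$ and $\epsilon>0$, and consider the $\ell^\infty$-ball $B_\infty(x,\epsilon)\cap B_i$, which has positive Lebesgue measure relative to $B_i$ since $B_i$ is a closed box with non-empty interior around $x$ (or, if $x$ lies on $\partial B_i$, a corner/edge portion still has positive measure). By the equidistribution of low discrepancy sequences, for all sufficiently large $M_i$, $\anntpi\cap B_\infty(x,\epsilon)\ne\emptyset$, so $\distinf{x,\anntpi}\le\epsilon$ for infinitely many $M_i$. Hence $x\in A_\epsilon^{(i)}$ for every $\epsilon>0$.

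For the reverse inclusion, take $x\in\bigcap_{\epsilon>0}A_\epsilon^{(i)}$. For every $\epsilon>0$ there are infinitely many $M_i$ with $\distinf{x,\anntpi}\le\epsilon$; since $\anntpi\subset B_i$, monotonicity of distance yields $\distinf{x,B_i}\le\distinf{x,\anntpi}\le\epsilon$. Letting $\epsilon\to 0$ gives $\distinf{x,B_i}=0$, and since $B_i$ is closed this forces $x\in B_i$.

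The only nontrivial point is the density claim used in the forward direction, so I would either cite the standard fact that $D^*_{M_i,B_i}(\anntpi)\to 0$ implies $\anntpi$ is dense in $B_i$ (an immediate consequence of Definition \ref{def:genDN} applied to shrunken sub-boxes around $x$), or state it as a short auxiliary remark. The boundary case $x\in\partial B_i$ deserves a brief comment: one picks a small sub-box of $B_i$ of positive measure contained in $B_\infty(x,\epsilon)$, which exists because $B_i$ is a product of intervals, and then applies the discrepancy bound to that sub-box.
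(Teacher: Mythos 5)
Your proposal is correct and follows essentially the same route as the paper's proof: the forward inclusion via a positive-measure sub-box of $B_i$ around $x$ that must capture a net point once the discrepancy is small enough (hence for infinitely many $M_i$), and the reverse inclusion via $\distinf{x,\anntpi}\geq \distinf{x,B_i}$ because $\anntpi\subset B_i$ and $B_i$ is closed. The only cosmetic difference is that the paper argues the reverse inclusion contrapositively and normalizes $B_i=[0,1]^d$ at the outset, neither of which changes the substance.
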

\begin{proof}
    {Without loss of generality, assume $B_i=[0,1]^d$.}
    Our goal is to prove that $\bigcap\limits_{\epsilon>0} A_{\epsilon}^{(i)}=B_i$.
    It is easy to see that $A_{\epsilon_1}^{(i)}\subset A_{\epsilon_2}^{(i)}$ whenever $\epsilon_1 < \epsilon_2$,
    so $\bigcap\limits_{\epsilon>0} A_{\epsilon}^{(i)}$ can be viewed as the limit of sets as $\epsilon\to 0$. 
    We first show that $B_i\subset A_{\epsilon}^{(i)}$ for each $0<\epsilon<1$.
    Fix an $\epsilon\in (0,1)$.
    For an arbitrary $x\in B_i$, let $J_x$ be the box centered at $x$ with side $\epsilon$.
    Define $J=J_x\cap [0,1)^d$.
    Then $\lambda(J)\geq (\frac{1}{2})^d\lambda(J_x) = \frac{1}{2^d}\epsilon^d$.
    Since $\anntpi$ is a low discrepancy set,  $D^*_{M_i}(\anntpi)\to 0$ as $M_i\to\infty$.
    Therefore, for the tolerance $\frac{1}{5^d}\epsilon^d$, if $M_i$ is large enough, we have 
    \[
        \abs{\#(\anntpi\cap J)/M_i-\lambda(J)} \leq D^*_{M_i}(\anntpi) < \frac{\epsilon^d}{5^d} < \lambda(J),
    \]
    which implies that $\anntpi\cap J\neq \emptyset$.
    Hence there is a point in $\anntpi$ whose $l^{\infty}$ distance to $x$ is within $\epsilon$, 
    i.e., 
    \begin{equation}
    \label{eq:xanntpi}
        \distinf{x,\anntpi}\leq\epsilon.
    \end{equation}
    Note that \eqref{eq:xanntpi} is true as long as $M_i$ is large enough. 
    Consequently, there are infinitely many $M_i$ such that \eqref{eq:xanntpi} holds true.
    According to the definition of $\limsup$, it follows that
    \[
        x\in \limsup_{M_i\to\infty} \left\{ x\in\mathbb{R}^d: \distinf{x,\anntpi}\leq\epsilon  \right\} = A_{\epsilon}^{(i)}.
    \]
    This shows that $B_i\subset A_{\epsilon}^{(i)}$ since $x$ is arbitrary in $B_i$.
    Because $\epsilon>0$ is arbitrary, we see that 
    $B_i\subset \bigcap\limits_{\epsilon>0} A_{\epsilon}^{(i)}.$
    It remains to prove the other direction: $\bigcap\limits_{\epsilon>0} A_{\epsilon}^{(i)}\subset B_i$.
    This is equivalent to the fact that: if $x\notin B_i$, then $x\notin \bigcap\limits_{\epsilon>0} A_{\epsilon}^{(i)}$.
    Now suppose $x\notin B_i=[0,1]^d$. Then $\distinf{x,B_i}=\delta>0$ for some positive constant $\delta$. 
    We know that $\anntpi\subset B_i$, so $\distinf{x,\anntpi}\geq\delta>0$ for any $M_i$.
    Therefore, $x\notin A^{(i)}_{\delta}$, which yields that $x\notin \bigcap\limits_{\epsilon>0} A_{\epsilon}^{(i)}$.
    Now the second direction is proved and we conclude that 
    $\bigcap\limits_{\epsilon>0} A_{\epsilon}^{(i)}=B_i$.
\end{proof}

The next lemma is a property of the generalized discrepancy.
\begin{lemma}
\label{lm:D12}
    Let $S_1$ and $S_2$ be two finite subsets of $\Omega_1\subset [0,\infty)^d$ and $\Omega_2\subset [0,\infty)^d$, respectively.
    Suppose $\lambda(\Omega_1\cap\Omega_2)=0$ and $S_1\cap S_2=\emptyset$.
    If $D^*_{N_1,\Omega_1}(S_1) < \epsilon$ and $D^*_{N_2,\Omega_2}(S_2) < \epsilon$, where $N_i=\# S_i$,
    then 
    \begin{equation}
    \label{eq:D12} 
        D^*_{N_1+N_2,\Omega_1\cup\Omega_2}(S_1\cup S_2) < \abs{\frac{N_1}{N_1+N_2}-\frac{\lambda(\Omega_1)}{\lambda(\Omega_1)+\lambda(\Omega_2)}} + \epsilon.
    \end{equation}
\end{lemma}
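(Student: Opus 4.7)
The plan is to reduce the bound on the joint discrepancy to a convex-combination argument, treating the fractions $N_1/N$ and $\lambda(\Omega_1)/(\lambda(\Omega_1)+\lambda(\Omega_2))$ as the ``mismatched weights'' that generate the extra term $|N_1/(N_1+N_2) - \lambda(\Omega_1)/(\lambda(\Omega_1)+\lambda(\Omega_2))|$ on the right-hand side.

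First I would fix an arbitrary box $J=\prod_{i=1}^d[0,a_i)\in\mathcal{J}$ and split the two quantities inside the absolute value defining $D^*_{N_1+N_2,\Omega_1\cup\Omega_2}(S_1\cup S_2)$. Using $S_1\cap S_2=\emptyset$ gives $\#((S_1\cup S_2)\cap J)=\#(S_1\cap J)+\#(S_2\cap J)$, and using $\lambda(\Omega_1\cap\Omega_2)=0$ gives both $\lambda((\Omega_1\cup\Omega_2)\cap J)=\lambda(\Omega_1\cap J)+\lambda(\Omega_2\cap J)$ and $\lambda(\Omega_1\cup\Omega_2)=\lambda(\Omega_1)+\lambda(\Omega_2)$. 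Introducing the shorthand $p_k=\#(S_k\cap J)/N_k$, $q_k=\lambda(\Omega_k\cap J)/\lambda(\Omega_k)$, $\alpha=N_1/(N_1+N_2)$ and $\beta=\lambda(\Omega_1)/(\lambda(\Omega_1)+\lambda(\Omega_2))$, the quantity to be estimated becomes $|\alpha p_1+(1-\alpha)p_2-\beta q_1-(1-\beta)q_2|$.

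Next I would rewrite this expression by adding and subtracting $\alpha q_1+(1-\alpha)q_2$ to obtain the algebraic identity
\begin{equation*}
\alpha p_1+(1-\alpha)p_2-\beta q_1-(1-\beta)q_2
= \alpha(p_1-q_1)+(1-\alpha)(p_2-q_2)+(\alpha-\beta)(q_1-q_2).
\end{equation*}
By the triangle inequality and the hypothesis that $|p_k-q_k|\le D^*_{N_k,\Omega_k}(S_k)<\epsilon$, together with the obvious bound $|q_1-q_2|\le 1$ (each $q_k\in[0,1]$), the right-hand side is bounded above by $\alpha\epsilon+(1-\alpha)\epsilon+|\alpha-\beta|=\epsilon+|\alpha-\beta|$. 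Since this bound is uniform in $J$, taking the supremum over $J\in\mathcal{J}$ yields \eqref{eq:D12}. The strict inequality survives because both $D^*_{N_k,\Omega_k}(S_k)$ are themselves strictly less than $\epsilon$, so $\alpha D^*_{N_1,\Omega_1}(S_1)+(1-\alpha)D^*_{N_2,\Omega_2}(S_2)<\epsilon$.

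There is no real obstacle here; the argument is mostly bookkeeping. The only subtle point is tracking the interaction between the normalizing weights $N_k/N$ (coming from the point counts) and $\lambda(\Omega_k)/\lambda(\Omega_1\cup\Omega_2)$ (coming from the measures), and recognizing that their discrepancy is exactly what multiplies $q_1-q_2$ in the algebraic identity above. The disjointness hypotheses $S_1\cap S_2=\emptyset$ and $\lambda(\Omega_1\cap\Omega_2)=0$ are used solely to justify the additive decomposition of counts and measures in the first step.
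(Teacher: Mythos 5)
Your proof is correct and follows essentially the same route as the paper's: both split the joint count and measure using the disjointness hypotheses, write the discrepancy as a difference of convex combinations with weights $N_k/(N_1+N_2)$ versus $\lambda(\Omega_k)/(\lambda(\Omega_1)+\lambda(\Omega_2))$, isolate the weight mismatch $|\alpha-\beta|$ multiplied by a quantity bounded by $1$, and absorb the remaining terms into $\epsilon$ via the individual discrepancy bounds. The only cosmetic difference is that the mismatch term in your identity multiplies $q_1-q_2$ while the paper's multiplies $A_1/N_1-A_2/N_2$; both are bounded by $1$ and the arguments are otherwise identical.
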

\begin{proof}
    Denote $\lambda_i=\lambda(\Omega_i)$ with $i=1,2$.
    Let $\mathcal{J}$ be the family of boxes as in Definition \ref{def:genDN}.
    For any $J\in\mathcal{J}$, define 
    \[
        A_i:=\#(S_i\cap J),\quad b_i:=\lambda(\Omega_i\cap J),\quad i=1,2.
    \]
    According to Definition \ref{def:genDN} and the assumptions in the claim,
    it suffices to show that
     \begin{equation}
    \label{eq:toshow12}
    \begin{aligned}
        \abs{\dfrac{\#( (S_1\cup S_2)\cap J )}{N_1+N_2}-\dfrac{\lambda((\Omega_1\cup\Omega_2)\cap J)}{\lambda(\Omega_1\cup\Omega_2)}} 
        &= \abs{\frac{A_1+A_2}{N_1+N_2}-\frac{b_1+b_2}{\lambda_1+\lambda_2}}\\
        &< \abs{\frac{N_1}{N_1+N_2}-\frac{\lambda_1}{\lambda_1+\lambda_2}} +\epsilon.
    \end{aligned}
    \end{equation}
    Note first that the definition of $D^*_{N_i,\Omega_i}(S_i)$ yields
    \begin{equation}
    \label{eq:know12} 
        \abs{\frac{A_i}{N_i}-\frac{b_i}{\lambda_i}}
        \leq D^*_{N_i,\Omega_i}(S_i) 
        < \epsilon,\quad i=1,2.
    \end{equation}
    It is easy to see that
    \[
    \begin{aligned}
        \frac{A_1+A_2}{N_1+N_2}-\frac{b_1+b_2}{\lambda_1+\lambda_2} 
        &= 
        \frac{ N_1 }{ N_1+N_2 }\cdot \frac{ A_1 }{ N_1 } + \frac{ N_2 }{ N_1+N_2 }\cdot \frac{ A_2 }{ N_2 } \\
        &- \left( \frac{ \lambda_1 }{ \lambda_1+\lambda_2 }\cdot \frac{ b_1 }{ \lambda_1 } + \frac{ \lambda_2 }{ \lambda_1+\lambda_2 }\cdot \frac{ b_2 }{ \lambda_2 }  \right).
    \end{aligned}
    \]
    Together with \eqref{eq:know12}, we deduce that 
    \[
    \begin{aligned}
        &\abs{\frac{A_1+A_2}{N_1+N_2}-\frac{b_1+b_2}{\lambda_1+\lambda_2}} 
        < \\
        &\abs{ \left(\frac{ N_1 }{ N_1+N_2 } - \frac{ \lambda_1 }{ \lambda_1+\lambda_2 }\right) \cdot \frac{ A_1 }{ N_1 } 
        + \left(\frac{ N_2 }{ N_1+N_2 } - \frac{ \lambda_2 }{ \lambda_2+\lambda_2 }\right) \cdot \frac{ A_2 }{ N_2 } } + \epsilon\\
    &\leq  \abs{ \frac{ N_1 }{ N_1+N_2 } - \frac{ \lambda_1 }{ \lambda_1+\lambda_2 } } +\epsilon,
    \end{aligned}
    \]
where we have used the fact that 
    $\abs{\frac{ A_1 }{ N_1 } - \frac{ A_2 }{ N_2 }}\leq 1$.
    Since \eqref{eq:toshow12} is proved for any $J\in\mathcal{J}$,
    by taking a $\sup$ of the left-hand side of \eqref{eq:toshow12} over $J\in\mathcal{J}$, we conclude that \eqref{eq:D12} holds true.
\end{proof}
Based on Lemmas \ref{lm:Omega} and \ref{lm:D12}, we show in Theorem \ref{thm:anchornet} the properties of the output of Algorithm \ref{alg:anchornet}. The first property says that the region $\Omega=\bigcup\limits_{i=1}^Q B_i$ associated with anchor nets is able to compactly capture $X$  and the second property indicates that the anchor nets have low discrepancy in $\Omega$.
\begin{theorem}
\label{thm:anchornet}
{    For a given dataset $X$, let $\anntXp$ be the output of Algorithm \ref{alg:anchornet} and $N:=\#\anntXp$, $M_i:=\#\anntpi$.
    Assume that $0<\tau_1 \leq M_i/N\leq \tau_2<1$ for some constants $\tau_1,\tau_2\in (0,1)$.
    Define $\Omega=\bigcup\limits_{i=1}^Q B_i$, then
\begin{enumerate}
        \item  $\Omega$ has the following equivalent expression  \begin{equation}
    \label{eq:Omega}
        \Omega = \bigcap\limits_{\epsilon>0} \limsup\limits_{N\to\infty} \left\{ x\in\mathbb{R}^d: \distinf{x,\anntXp}\leq\epsilon \right\}.
    \end{equation}
        with 
        $\lambda(\Omega)>0$ and $X\subset \Omega$;
        \item $\lim\limits_{N\to\infty} D^*_{N,\Omega}(\anntXp) = 0$.
    \end{enumerate}
        Furthermore, if \eqref{eq:picond} holds for every $i$, then 
        \begin{equation}
        \label{eq:lowDanchor}
            D^*_{N,\Omega}(\anntXp) = O(N^{-1}(\log N)^d).
        \end{equation}
    }
\end{theorem}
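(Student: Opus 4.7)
The plan is to exploit the two decompositions built into Algorithm \ref{alg:anchornet}, namely $\mathcal{A}_X = \bigcup_{i=1}^Q \mathcal{A}^{(i)}$ and $\Omega = \bigcup_{i=1}^Q B_i$, and to reduce each of the three claims to a corresponding statement about a single piece $(\mathcal{A}^{(i)}, B_i)$, which can then be handled by Lemma \ref{lm:Omega} and assembled by Lemma \ref{lm:D12}. The assumption $\tau_1 \leq M_i/N \leq \tau_2$ is the bridge that turns ``$M_i \to \infty$'' (the regime of those lemmas) into ``$N \to \infty$'' (the regime of the theorem).

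For the first claim, the observation to start from is $\distinf{x,\mathcal{A}_X} = \min_{1\leq i\leq Q}\distinf{x,\mathcal{A}^{(i)}}$, so for each fixed $\epsilon>0$,
\[
\bigl\{x\in\mathbb{R}^d : \distinf{x,\mathcal{A}_X}\leq\epsilon\bigr\} = \bigcup_{i=1}^Q \bigl\{x\in\mathbb{R}^d : \distinf{x,\mathcal{A}^{(i)}}\leq\epsilon\bigr\}.
\]
Because limsup commutes with finite union and $M_i\to\infty$ whenever $N\to\infty$, taking $\limsup_{N\to\infty}$ on both sides recovers $\bigcup_{i=1}^Q A_\epsilon^{(i)}$. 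The families $\{A_\epsilon^{(i)}\}_{\epsilon>0}$ are monotonically decreasing as $\epsilon\downarrow 0$, and with only finitely many indices a pigeonhole argument lets me swap the $\epsilon$-intersection with the $i$-union, giving $\bigcap_{\epsilon>0}\bigcup_i A_\epsilon^{(i)} = \bigcup_i \bigcap_{\epsilon>0} A_\epsilon^{(i)} = \bigcup_i B_i = \Omega$ by Lemma \ref{lm:Omega}. The inclusion $X\subset\Omega$ is immediate: every $x_j$ is placed into some $G_i\subset B_i$ in Lines 3--6. Positivity $\lambda(\Omega)>0$ follows from observing that \eqref{eq:picond} presupposes at least one $B_i$ with $\lambda(B_i)>0$, which forces $\lambda(\Omega)>0$.

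For the second and third claims, I would apply Lemma \ref{lm:D12} inductively $Q-1$ times, pairing the constituents $(\mathcal{A}^{(i)},B_i)$ one at a time. Each low-discrepancy set satisfies $D^*_{M_i,B_i}(\mathcal{A}^{(i)}) = O(M_i^{-1}(\log M_i)^d)$, so the only additional cost per merge is a proportion-mismatch term $\bigl|\tfrac{M_i}{M_i+M_j} - \tfrac{\lambda(B_i)}{\lambda(B_i)+\lambda(B_j)}\bigr|$. Under the exact proportion condition \eqref{eq:picond} these mismatches vanish, so $D^*_{N,\Omega}(\mathcal{A}_X)$ is bounded by $\max_i D^*_{M_i,B_i}(\mathcal{A}^{(i)})$. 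The hypothesis $M_i\geq \tau_1 N$ then upgrades this to $O(N^{-1}(\log N)^d)$, proving \eqref{eq:lowDanchor}. Without \eqref{eq:picond}, the ceiling in Line 11 still makes $M_i = \lceil m\,\lambda(B_i)/\sum_j\lambda(B_j)\rceil$, so the mismatch terms are $O(1/N)$, which together with $\max_i D^*_{M_i,B_i}(\mathcal{A}^{(i)})\to 0$ yields the weaker limit $\lim_{N\to\infty} D^*_{N,\Omega}(\mathcal{A}_X) = 0$.

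The hard part will be verifying the hypotheses of Lemma \ref{lm:D12} at every merge step, specifically that the bounding boxes $B_i$ and $B_j$ have $\lambda(B_i\cap B_j)=0$ and that $\mathcal{A}^{(i)}\cap \mathcal{A}^{(j)}=\emptyset$. The second is generic (or can be enforced by a harmless perturbation of each low discrepancy set), but the first is subtle: nothing in Algorithm \ref{alg:anchornet} prevents two smallest-enclosing boxes from overlapping with positive measure. The cleanest fix is to replace $B_i$ in each merge step by the carved difference $B_i\setminus \bigcup_{j<i} B_j$ when computing the combined discrepancy, absorbing any overflow into the proportion-mismatch term and noting that this only inflates constants, not rates. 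With this accounting, the induction goes through and both \eqref{eq:lowDanchor} and the weaker limit follow.
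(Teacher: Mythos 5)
Your proposal follows essentially the same route as the paper: claim 1 via the decomposition $\anntXp=\bigcup_i\anntpi$ and Lemma \ref{lm:Omega}, claims 2 and 3 via an inductive application of Lemma \ref{lm:D12}, with the proportion condition \eqref{eq:picond} killing the mismatch terms and the hypothesis $\tau_1 N\le M_i\le\tau_2 N$ converting the per-piece discrepancy rates into a rate in $N$. You are in fact more careful than the paper in two places---justifying the interchange of the $\epsilon$-intersection with the finite union over $i$, and observing that Lemma \ref{lm:D12} requires $\lambda(B_i\cap B_j)=0$ and $\anntpi\cap\annt^{(j)}=\emptyset$, hypotheses the algorithm does not guarantee and the paper's proof applies without comment---so your carved-difference fix is a strengthening of the published argument rather than a deviation from it.
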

\begin{proof}
    We verify that the two conditions are satisfied by $\anntXp=\bigcup\limits_{i=1}^Q \anntpi$.
    
 {Since $\anntXp=\bigcup\limits_{i=1}^Q \anntpi$ and $M_i/N\in (\tau_1,\tau_2)$, we see that
    \[
        \Omega =  \bigcup\limits_{i=1}^Q \bigcap\limits_{\epsilon>0} 
        \limsup\limits_{M_i\to\infty} \left\{ x\in\mathbb{R}^d: \distinf{x,\anntpi}\leq\epsilon \right\} 
        =  \bigcup\limits_{i=1}^Q \bigcap\limits_{\epsilon>0} A_{\epsilon}^{(i)},
    \]
    where $A_{\epsilon}^{(i)}$ is defined as in Lemma \ref{lm:Omega}.
    According to Lemma \ref{lm:Omega}, 
    it follows that $\Omega = \bigcup\limits_{i=1}^Q B_i.$ In addition, we have the estimation 
    \[
        \lambda(\Omega)\geq\lambda(B_1) > 0 \quad\text{and}\quad 
        X\subset\bigcup\limits_{i=1}^Q G_i \subset \bigcup\limits_{i=1}^Q B_i = \Omega,
    \]
    which justifies the first condition.}
    
    Next we prove the second property:
\begin{equation}
\label{eq:showcond2} 
    \lim_{N\to\infty} D^*_{N,\Omega}(\anntXp) = 0.
\end{equation}
    This is proved by using Lemma \ref{lm:D12}.
    Assume at this moment $Q=2$. 
    Then $\anntXp=\annt^{(1)}\cup \annt^{(2)}$, $\Omega=B_1\cup B_2$.
    We deduce from Lemma \ref{lm:D12} that
\begin{equation}
\label{eq:lowDineq}
    \lim_{N\to\infty} D^*_{N,\Omega}(\anntXp) 
    \leq \lim_{N\to\infty} \abs{\frac{M_1}{M_1+M_2}-\frac{\lambda(B_1)}{\lambda(B_1)+\lambda(B_2)}} + \lim_{N\to\infty} \max_{i=1,2} D^*_{M_i,B_i}(\anntpi) ,
\end{equation}
   {where the first term in the upper bound goes to zero due to \eqref{eq:picond}}
    and the second term also vanishes because of the fact that
    $\lim\limits_{M_i\to \infty} D^*_{M_i,B_i}(\anntpi) = 0$ and $M_i/N\in (\tau_1,\tau_2)$.
    If $Q>2$, based on the result for $Q=2$,
    we can apply Lemma \ref{lm:D12} inductively to show that the condition holds true for $Q=3,4,\dots.$
    Therefore, \eqref{eq:showcond2} is justified.
    
    {Finally it remains to prove \eqref{eq:lowDanchor}.}
    This is actually an immediate result of \eqref{eq:lowDineq}.
    Consider $Q=2$. 
    Under the above assumption, it follows from \eqref{eq:lowDineq} that 
    \begin{equation}
    \label{eq:lowDineq2}
        \lim_{N\to\infty} D^*_{N,\Omega}(\anntXp) \leq \lim_{N\to\infty} \max_{i=1,2} D^*_{M_i,B_i}(\anntpi).
    \end{equation}
    Since $\anntpi$ is a low discrepancy set in $B_i$,
    $D^*_{M_i,B_i}(\anntpi) = O(M_i^{-1}(\log M_i)^d)$.
    According to the assumption in the theorem, i.e., there are constants $\tau_1,\tau_2\in(0,1)$ such that $\tau_1 N \leq M_i \leq \tau_2 N$,
    we see that $O(M_i^{-1}(\log M_i)^d) = O(N^{-1}(\log N)^d)$.
    Therefore, \eqref{eq:lowDineq2} implies
        $D^*_{N,\Omega}(\anntXp) = O(N^{-1}(\log N)^d)$,
    which completes the proof.
\end{proof}

{It should be pointed out that even though the first condition in Theorem \ref{thm:anchornet} says that $\Omega$ is large enough to capture $X$, it does \emph{not} indicate that $\Omega$ will be \emph{unnecessarily} large.
Note that $\Omega$ adapts to the geometry of $X$ and can be roughly viewed as a region spanned by $X$, as illustrated in Figure \ref{fig:AN}.
For highly non-uniform datasets, sampling in $\Omega$ will be more efficient than in one single box that contains $X$.
This is because $\Omega$ nicely reflects the geometry of the dataset $X$ and thus uniform distribution (guaranteed by the second property) in $\Omega$ is expected to yield uniform distribution in $X$, as can be seen from the last subfigure in Figure \ref{fig:AN}.
}


\subsection{Anchor net method}
\label{sub:ANN}
In this section we propose the anchor net method for selecting landmark points and prove its computational complexity.
The anchor net method starts with the construction of an anchor net for the given
dataset $X$ and then search for the landmark points in the vicinity of the anchor net. The algorithm is presented in Algorithm \ref{alg:anchornetmethod}. 
\begin{algorithm}[tbhp]
	\caption{\it Anchor net method}
	\emph{Input:} Dataset $X=\{x_{1},\dots,x_{n}\}\subset \mathbb{R}^d$, integer $m$\\
	\emph{Output:} The set of landmark points $S$
	\begin{algorithmic}[1]
		\STATE {Apply Algorithm \ref{alg:anchornet} with net size $m$ to construct the anchor net $\anntXp$ for $X$}
		\FOR{each point $y$ in $\anntXp$}
		\STATE Find $x_i$ s.t., $x_i=\argmin\limits_{x_k\in X}||x_k-y||_{\infty}$
		\STATE Update $S=S\cup\{x_i\}$
		\ENDFOR
		\RETURN $S$
	\end{algorithmic}
	\label{alg:anchornetmethod}
\end{algorithm}

Since the landmark points are selected in the vicinity of the Anchor net in Algorithm \ref{alg:anchornetmethod}, the selected landmark points are uniformly spread inside the dataset. In Proposition \ref{prop:complexity}, we show that the computational cost of Algorithm \ref{alg:anchornetmethod} scales linearly in $n$.
\begin{proposition}
\label{prop:complexity}
The complexity of the anchor net method described in Algorithm \ref{alg:anchornetmethod} with net size $m$ is $O(mdn)$.
\end{proposition}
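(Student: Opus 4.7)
The plan is to decompose Algorithm~\ref{alg:anchornetmethod} into its two phases (first the call to Algorithm~\ref{alg:anchornet}, then the nearest-neighbor loop over $\anntXp$), bound the cost of each, and verify that $\#\anntXp=O(m)$ so the dominant costs multiply out to $O(mdn)$.

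First I would walk through Algorithm~\ref{alg:anchornet}. The bounding box $B_0$ can be determined in $O(nd)$ work via one pass over $X$, and constructing a low discrepancy set $\mathcal{T}$ of $s=O(m)$ points (Halton, Sobol, or a digital net) inside $B_0$ costs $O(md)$. The real workhorse is Lines 3--6: for each of the $n$ data points $x_j$, we scan all $s=O(m)$ anchors $t_k$ and evaluate $\|x_j-t_k\|_\infty$, each evaluation taking $O(d)$ time, giving $O(mdn)$ overall. Checking non-empty $G_i$'s (Line~7) takes $O(n)$. Computing the smallest enclosing boxes $B_i$ and their measures in Lines~8--10 can be done in a single pass through each $G_i$, so the aggregate cost is $\sum_i O(|G_i|\,d)=O(nd)$. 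The second-level low discrepancy sets in Line~11 are generated with $O(d)$ work per point for a total of $O(md)$, provided that their aggregate size is $O(m)$, which is the key bookkeeping step.

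Next I would verify this size bound. From Line~11, $M_i:=\#\anntpi=\lceil m\lambda(B_i)/\sum_j\lambda(B_j)\rceil$, so
\[
    N:=\#\anntXp=\sum_{i=1}^Q M_i \;\leq\; m + Q \;\leq\; m+s \;=\;O(m),
\]
since $Q\leq s=O(m)$. With this in hand, the loop in Lines~2--5 of Algorithm~\ref{alg:anchornetmethod} iterates $O(m)$ times, and each iteration performs an exhaustive $l^\infty$ nearest-neighbor search in $X$ at cost $O(nd)$, for a total of $O(mdn)$. Summing the contributions from anchor-net construction ($O(mdn)+O(md)+O(nd)$) and the landmark-selection loop ($O(mdn)$) yields the advertised $O(mdn)$ complexity.

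The only subtlety I expect is the ceiling-induced overshoot in Line~11: naively one might worry that rounding up for each of $Q$ boxes inflates $N$ beyond $O(m)$, but since $Q\leq s=O(m)$ the additive slack is absorbed. No asymptotic obstacle arises from the low-discrepancy constructions themselves, as standard Halton/digital-net generators produce their first $M$ points in $O(Md)$ time, which is already subsumed by the dominant $O(mdn)$ term.
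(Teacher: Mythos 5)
Your proof is correct and follows essentially the same route as the paper's: a step-by-step cost accounting of Algorithm~\ref{alg:anchornet} (with the loop in Lines 3--6 dominating at $O(mdn)$) followed by the $O(m)\times O(nd)$ nearest-neighbor loop in Algorithm~\ref{alg:anchornetmethod}. Your explicit verification that the ceilings in Line~11 only inflate $\#\anntXp$ by at most $Q\leq s=O(m)$ is a small detail the paper leaves implicit when it asserts $\#\anntXp=O(m)$, but it does not change the argument.
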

\begin{proof}
First we calculate the complexity of Algorithm \ref{alg:anchornet}. 
Since $s=O(m)$, it is easy to see that Step 1 costs $O(md)$ and the \textbf{for} loop in Steps 3--6 amounts to $O(mdn)$. 
Since $G_1,\dots,G_Q$ form a disjoint partition of $X$, we have $\# G_1+\dots+\# G_Q=n$.
The cost of the \textbf{for} loop in Steps 8--10 is then
$d\cdot\# G_1+\dots+d\cdot\# G_Q=dn$.
The cost of Step 11 is $d\cdot \#\annt^{(1)}+\dots+d\cdot \#\annt^{(Q)} = d O(m)$.
Overall, we see that the complexity of Algorithm \ref{alg:anchornet} is $O(mdn)$.

Now we compute the overall complexity of Algorithm \ref{alg:anchornetmethod}.
Since $\#\anntXp=O(m)$, the \textbf{for} loop in Steps 2--5 of Algorithm \ref{alg:anchornetmethod} costs $O(mdn)$.
Thus we conclude that the overall complexity of Algorithm \ref{alg:anchornetmethod} is $O(mdn)$
and the proof is complete.
\end{proof}

{It is known that both uniform sampling and $k$-means \nys methods tend to generate more sample points from regions with high density of points, which can \emph{not} effectively help reduce the \nys approximation error.
Different from those density-based approaches, anchor net $\anntXp$ is designed to efficiently tessellate the given data to avoid the formation of clumps.
Because of the geometric properties of anchor nets, the anchor net method can yield more accurate \nys approximation with same approximation rank, regardless of the positive-definiteness of the kernel function.
It should be emphasized that a good selection of landmark points also benefits the numerical stability of the \nys method, which significantly affects the quality of the approximation.
We discuss in Section \ref{subs:pinv} the stability issue associated with \nys method and
provide an numerical example in Section \ref{sub:DK} to demonstrate the impact of landmark points on approximation accuracy and numerical stability.
}

\subsection{Practical implementation}
In this section, we discuss several implementation details of the proposed method.
\label{subsec:implementation}
\subsubsection{Adaptive tensor grids}\label{sub:atensor}
Though tensor grids display perfect uniformity, they are not used for high dimensional data due to the curse of dimensionality.
The naive construction of tensor grid by employing a parameter that specifies the number of points per direction is not practical in high dimension, since the degrees of freedom (DOFs) depend exponentially on dimension. 
In this section, we propose an adaptive tensor grid to significantly reduce the exponential growth of DOFs with dimension, which enables the practical use of tensor grids. 

Instead of treating approximation in each dimension independently,
we control the \emph{total} number of nodes per direction over \emph{all} dimensions.
That is, for a nonnegative integer $p$, 
if $i_k$ is the number of nodes in the $k$th dimension, then we require $i_1+\dots+i_d = p+d$.
 This new strategy yields significantly fewer DOFs and
 results in a much slower growth of DOFs with respect to  $d$ or $p$, as illustrated in Figure \ref{fig:DOFs}. 
 An upper bound of the DOFs is given in Proposition \ref{prop:DOFs}.
 \begin{proposition}
     \label{prop:DOFs}
     Let $p$ be a nonnegative integer. Consider a tensor grid in $\mathbb{R}^d$ with $i_k$ points $(i_k\geq 1)$ in the $k$th dimension
     such that $i_1+\dots+i_d=p+d$.
     Then the total number of nodes is bounded by $e^p$, i.e.,
 $i_1 i_2\dots i_d \leq \left( \frac{p+d}{d}  \right) ^d < e^p.$
 \end{proposition}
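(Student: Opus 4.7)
The plan is to combine two standard inequalities: first the AM-GM inequality to turn a product with a fixed sum into an expression involving the arithmetic mean, and then the elementary bound $\log(1+t)\le t$ to compare $(1+p/d)^d$ with $e^p$.

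First I would invoke the AM-GM inequality on the positive integers $i_1,\dots,i_d$. Since $i_1+\dots+i_d = p+d$, we obtain
\[
    (i_1 i_2 \cdots i_d)^{1/d} \;\leq\; \frac{i_1+\dots+i_d}{d} \;=\; \frac{p+d}{d},
\]
which upon raising to the $d$th power gives the first inequality $i_1 i_2\cdots i_d \leq \left(\frac{p+d}{d}\right)^d$.

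Next I would establish the second inequality $\left(\frac{p+d}{d}\right)^d < e^p$, which is equivalent to $(1+p/d)^d < e^p$. Taking logarithms, this reduces to showing $d\log(1+p/d) < p$, i.e., $\log(1+t) < t$ with $t=p/d > 0$. This is a standard consequence of the strict concavity of $\log$ (or of the Taylor expansion $\log(1+t)=t-t^2/2+\cdots$). Chaining the two bounds completes the proof.

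The argument is essentially routine; there is no real obstacle. The only thing worth watching is the edge case $p=0$, in which case the claim $i_1\cdots i_d \leq 1 < e^0 = 1$ is not strict on the right — one should note that the statement uses a strict inequality only for $p>0$, or equivalently interpret the bound as non-strict when $p=0$ (where all $i_k=1$ and the product equals $1$). Aside from this minor point, the proof is a direct two-line application of AM-GM followed by $\log(1+t)\le t$.
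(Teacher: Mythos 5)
Your proof is correct, and for the main inequality it takes a genuinely different and shorter route than the paper. The paper proves $i_1\cdots i_d\le\left(\frac{p+d}{d}\right)^d$ by induction on the dimension $d$: it fixes the last index $i_{d+1}=k$, applies the inductive hypothesis to the remaining $d$ factors, and then maximizes the resulting function $g(x)=x\left(\frac{p+d+1-x}{d}\right)^d$ over $[1,p+1]$ by computing $g'$ and locating the critical point at $x_*=(p+d+1)/(d+1)$. Your AM--GM argument reaches the same bound in one line and dispenses with both the induction and the calculus; it is the more economical proof, while the paper's induction is self-contained at the cost of length. For the second inequality the two arguments are essentially the same: the paper cites $\left(1+\frac{p}{d}\right)^{d/p}<e$, which is just the exponentiated form of your $\log(1+t)<t$. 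Your remark about the edge case $p=0$ is well taken and in fact applies to the paper's proof too: when $p=0$ the strict inequality $\left(\frac{p+d}{d}\right)^d<e^p$ degenerates to $1<1$, and the paper's cited fact $\left(1+\frac{p}{d}\right)^{d/p}<e$ is not even defined for $p=0$; the bound should be read as non-strict (or the statement restricted to $p\ge 1$). This is a defect of the statement as written, not of your argument.
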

 \begin{proof}

 The second inequality in the estimate follows from the fact that
     \[ \left( 1+\frac{p}{d}  \right) ^{d/p} < e.\]
 We now prove the first inequality by induction on $d$.
 For $d=1$, the inequality automatically holds true.
 Assume that the inequality holds true for $\mathbb{R}^d$.
 For $\mathbb{R}^{d+1}$,
 there are $p+1$ possible values for $i_{d+1}$.
 That is, $i_{d+1}=k\quad\text{and}\quad i_1+\dots+i_d=p+d+1-k,
     \quad k=1,\dots,p+1.$
 Applying the induction assumption for $\mathbb{R}^d$ gives
 $i_1 i_2 \cdots i_d
         \leq \left(\frac{p+d+1-k}{d}\right)^d.$
 Hence
 \[
     \max_{ \substack{{|\mathbf{i}|=p+d+1}\\  {i_{d+1}}=k}}  i_1 i_2 \cdots i_d i_{d+1}
         \leq k\left(\frac{p+d+1-k}{d}\right)^d \leq 
   \max_{1\leq k\leq p+1} g(k),
 \]
 where 
   $g(x) := x\left(\frac{p+d+1-x}{d}\right)^d.$
 Next we show that $g(k)$ is bounded by $\left(\frac{p+d+1}{d+1}\right)^{d+1}$.
 By computing $g'(x)$,
\[    g'(x)=\left(\frac{p+d+1-x}{d}\right)^{d-1}
        \frac{p+d+1-(d+1)x}{d}
\]
 we see that $g$ has a unique maximizer at $x_*=(p+d+1)/(d+1)$ in $[0,p+1]$.
 Therefore,
 \[
     \max_{|\mathbf{i}|=p+d+1}i_1 i_2 \cdots i_d i_{d+1}\leq \max_{1\leq k\leq p+1} g(k)\leq g(x_*) =
      \left(\frac{p+d+1}{d+1}\right)^{d+1}.
 \]
 We conclude that the inequality holds for $\mathbb{R}^{d+1}$
 and proof is complete.
 \end{proof}




 Figure~\ref{fig:DOFs} shows a comparison between DOFs of the uniform tensor grid  (dotted line) and the new one (solid line). In the uniform tensor grid, $p+1$ denotes the number of nodes in each dimension, while in adaptive tensor grid, $p+d$ controls the sum of numbers of nodes in each dimension.
 The left subfigure plots the DOFs with respect to dimension $d$ when $p=2,3,4,5$
 and the right subfigure plots the DOFs with respect to $p$ at different dimensions $d=2,3,4,5$. 
 It can be seen from the left plot in Figure~\ref{fig:DOFs} that the classical tensor grid (dotted line) yields exponentially increasing degrees of freedom with the dimension, while the new one (solid line) is immune to the increase of dimension.
 The right plot in Figure~\ref{fig:DOFs} shows that, compared to the old method, 
 the new method yields a much slower growth of DOFs as $p$ increases.
 We see from both figures that the new method is not sensitive to the increase of dimension $d$. {Adaptive tensor grids control the \emph{rate of increase} of DOFs across different levels of approximation by adding more intermediate levels. The numerical experiments in Section \ref{sec:experiments} demonstrate that the approximation error decreases as more DOFs are used in the adaptive tensor grid.}
 \begin{figure}[htbp]
     \centering
     \includegraphics[scale=.32]{./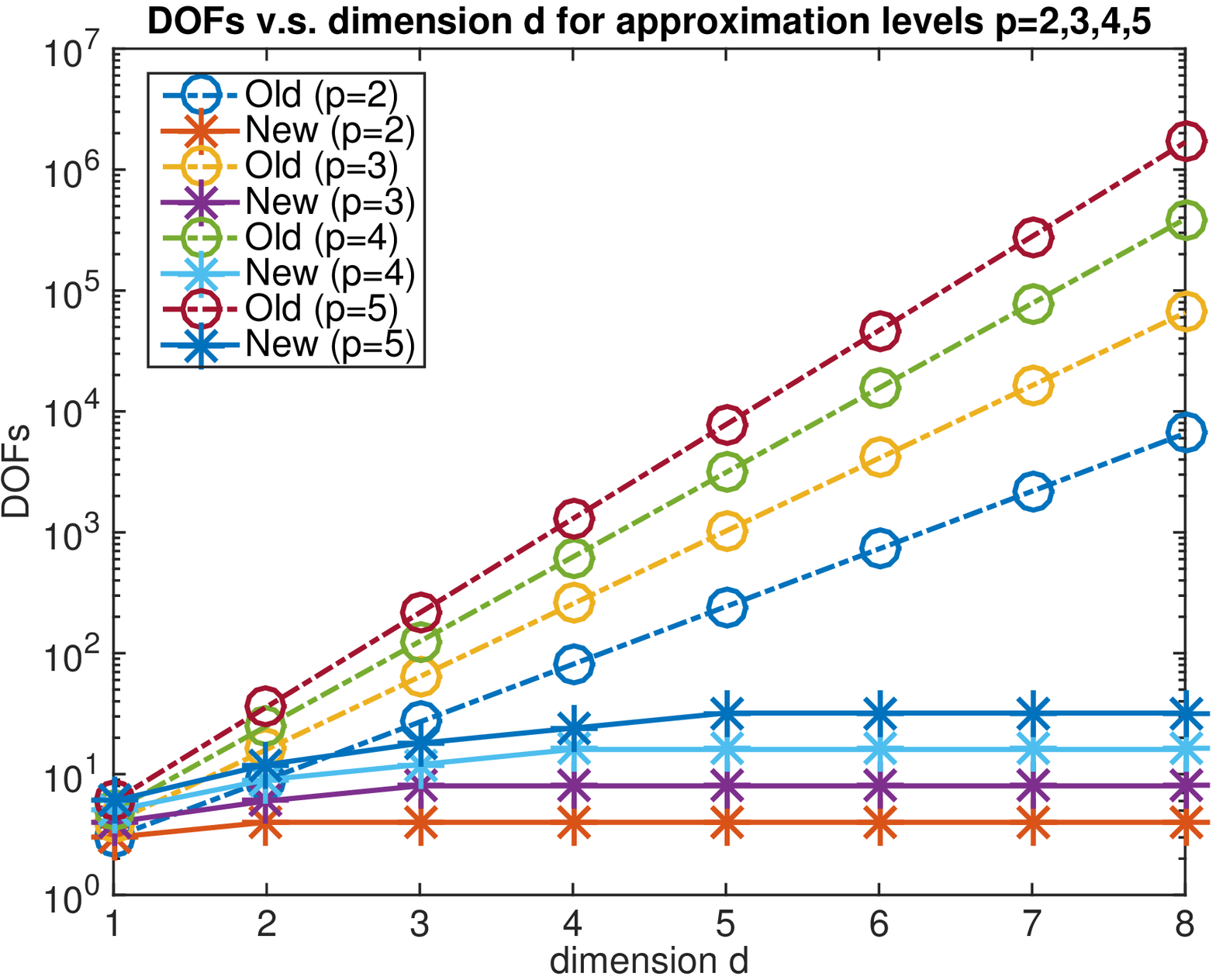}
     \hspace{0.5cm}
     \includegraphics[scale=.32]{./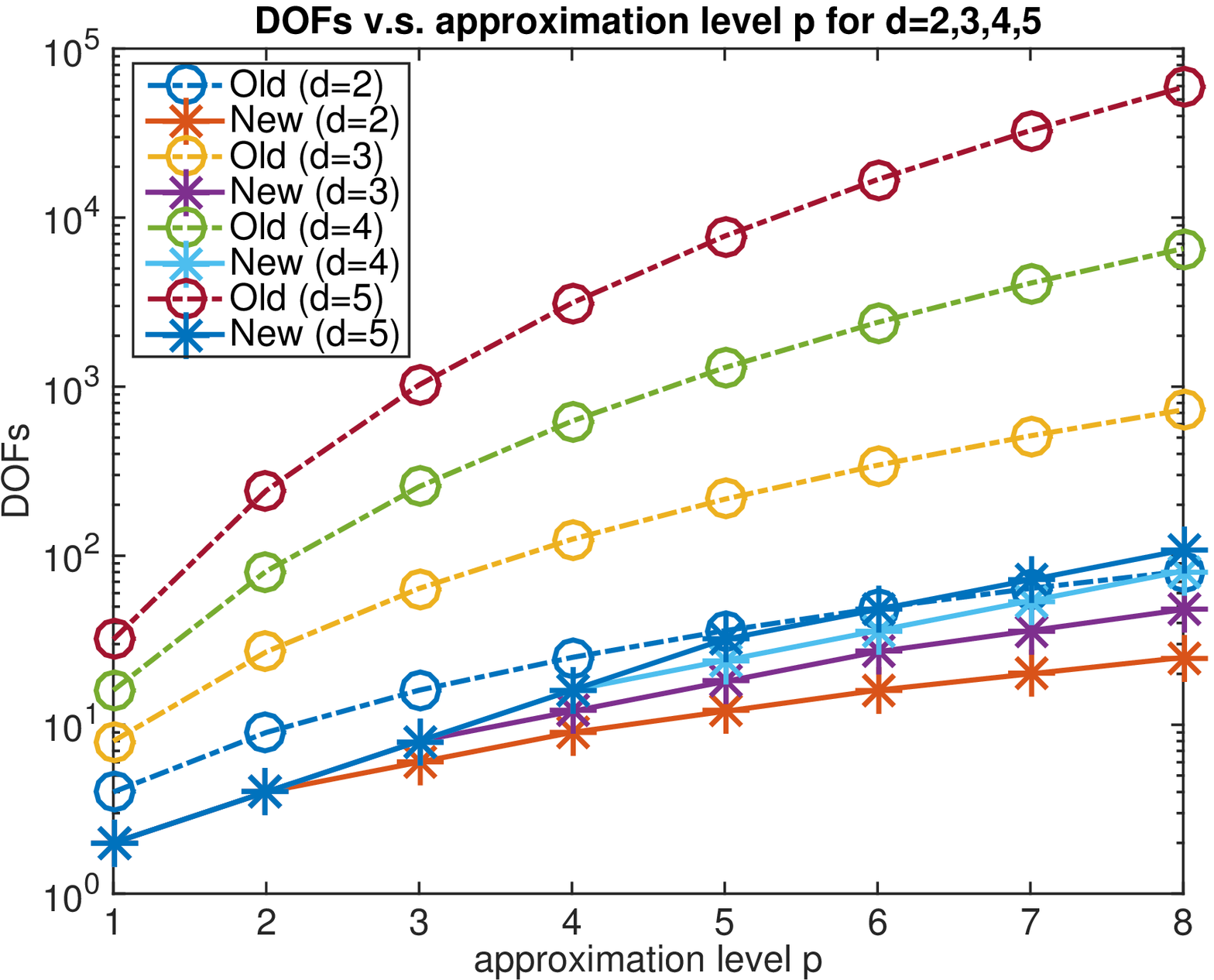}
     \caption{Left: DOFs vs dimension $d$; Right: DOFs vs $p$} 
     \label{fig:DOFs}
 \end{figure}

{Although adaptive tensor grids share the same goal as sparse grids \cite{spgrid2000} to control the number of generated nodes in high dimensions, there are several major differences between them: (1) Sparse grids use highly non-uniform nodes in the cubic domain. For example, along a specific dimension (for example, $y=0$ in the two dimensional case), the nodes are sparser in the interior and denser near the boundary. On the other hand, adaptive tensor grids tend to generate uniformly distributed nodes in the dataset. 
(2) Despite the fact that sparse grids reduce the exponential dependence on the dimension $d$ to a polynomial one, from $p^d$ to $d^p$, the actual number of degree of freedoms can still be very large even for a moderate $d$. For example, as shown in \cite{spgrid2000}, when $p$ (max number of nodes per dimension) increases from 1 to 7, the number of degrees of freedom increases from 21 to 652,065 for a dimension $d=10$ problem.
Therefore, it one wants higher accuracy by increasing $p$, significantly more DOFs will be generated. On the other hand, as shown in the right subfigure of \ref{fig:DOFs} the number of nodes increases at a much slower rate in adaptive tensor grids as the approximation level $p$ increases.
(3) Sparse grid is used for approximating functions and high dimensional integrals instead of matrix approximations, particularly \nys method for low-rank factorization. The motivation of sparse grid is to reduce the cost in approximating a continuous problem (e.g. a function, an integral) in high dimensions, while a matrix is a discrete object.}



\subsubsection{Numerical techniques for improving stability}
\label{subs:pinv}
The \nys formula requires computing  $K_{SS}^+$, the pseudoinverse of the kernel matrix associated with the landmark points. In some cases, the resulting kernel matrix can be nearly singular, causing numerical instability when computing the exact pseudoinverse. The issue can be circumvented for SPSD kernels by regularization techniques, i.e., adding a scalar matrix $\alpha I$ with a small constant $\alpha>0$ to lift all eigenvalues to $(\alpha,\infty)$ and computing the inverse of the sum. For indefinite kernels, however, regularization is no longer effective since $K_{SS}$ may have both positive and negative eigenvalues around 0. 
A well-known method that can handle both cases is to use the $\epsilon$-pseudoinverse $K_{SS,\epsilon}^+$ in place of $K_{SS}^+$,
where $K_{SS,\epsilon}$ is derived from $K_{SS}$ by treating singular values smaller than $\epsilon$ as zeros.
The modified \nys approximation with truncated pseudoinverse then becomes
\begin{equation}
\label{eq:nyspinv}
    K_{XX}\approx K_{XS} K_{SS,\epsilon}^+ K_{SX}.
\end{equation}
Some other alternatives have also been proposed.
For example, \cite{yuji20} proposed the following QR-based approximation in place of $K_{SS}$:
\begin{equation}
\label{eq:nysQR}
    K_{XX}\approx (K_{XS}R_{\epsilon}^+)( Q^T K_{SX}),
\end{equation}
where  $K_{SS}=QR$ is the QR factorization of $K_{SS}$ and $R_{\epsilon}$ is derived from $R$ by truncating singular values smaller than $\epsilon$, similar to $K_{SS,\epsilon}$ with respect to $K_{SS}$.
{In Section \ref{sec:experiments}, we perform numerical tests to show that the truncation techniques do rectify the stability issue.
However, aside from improved stability, numerical results show that \eqref{eq:nyspinv}   impairs the accuracy of the original \nys approximation. Although \eqref{eq:nysQR} performs better than \eqref{eq:nyspinv}, the approximation  still becomes less accurate as the number of landmark points increases.
In general, numerical techniques require accurate computation of singular values close to zero for a numerically low-rank matrix and are \emph{not} able to fully resolve the structural issues on accuracy and stability.
In this paper, we alleviate this issue by choosing a good selection of landmark points to improve the conditioning of the $K_{SS}$, as demonstrated in Section \ref{sec:experiments}.
}

\section{Numerical experiments}
\label{sec:experiments}


In this section we present various experiments to demonstrate the performance of the anchor net method and the numerical instability of some \nys methods for kernel matrices with rapidly decaying singular values.
The datasets are shown in Table \ref{tab:data}.
All experiments were performed in MATLAB 2020b on a desktop with an Intel i9-9900K 3.60GHz CPU and 64 GB of RAM.
The 2-norm is used to measure the \nys approximation error in all experiments except the one in Figure \ref{fig:dk500} where 2-norm can not be computed accurately and Frobenius norm is used instead.
For probabilistic methods like uniform sampling, the error is averaged over 10 repeated runs, and in each error-rank plot, the solid line corresponds to the averaged error while the dotted line corresponds to the error in an individual run. See, for example, Figures \ref{fig:MQ} -- \ref{fig:Spline}.
For the anchor net construction, we choose the low discrepancy set to be the adaptive tensor grid discussed in Section \ref{sub:atensor} as it is straightforward to parametrize adaptive tensor grids using the sum of the number of nodes in each direction. 
In Algorithm \ref{alg:anchornet}, we choose $\mathcal{T}$ to be larger than $\anntpi$ to tessellate the dataset more efficiently, especially in high dimensions. 
{For example, empirical results show that the size of $\mathcal{T}$ can be chosen to be 2 to 20 times larger than the size of $\anntpi$, with larger ratio for higher dimensions.}

\begin{table}
\caption{Datasets ($n$ instances in $d$ dimensions)}
\label{tab:data}
\begin{center}
\begin{tabular}{ccccccc}
\toprule
&  Donkey Kong  & Abalone & Anuran Calls (MFCC) & Covertype  \\
\hline
$d$ & 2  & 8  & 22 & 54 \\
\hline
$n$ & 3000 & 4177 & 7195 & 581012 \\
\bottomrule
\end{tabular}
\end{center}
\end{table}

\subsection{Indefinite kernels}
\label{sub:indefinite}
We consider the following indefinite kernels:
\[
\begin{aligned}
    \text{Multiquadrics}:\quad &\kappa(x,y) = \sqrt{|x-y|^2/\sigma^2 + 1},\\
    \text{sigmoid}:\quad &\kappa(x,y) = \tanh(x\cdot y /\sigma + 1),\\
    \text{Thin plate spline}:\quad &\kappa(x,y) = \frac{|x-y|^2}{\sigma^2}\ln\left(\frac{|x-y|^2}{\sigma^2}\right).
\end{aligned}
\]
Those kernels are commonly seen in deep learning, kernel density estimation, statistics, etc.
To the best of our knowledge, the only \nys methods that could potentially work for indefinite kernels are the uniform method \cite{nys2001} and the $k$-means \nys method \cite{nys2008kmeans,nys2010kmeans}.
Hence we compare our method to those two.
(Note that leverage-score sampling based \nys methods, such as \cite{nys2005,leverage2016,nys2017}, can not be applied here since they require the kernel matrices to be SPSD.)
The $k$-means method is implemented with an efficient vectorized function to compute $L_2$ distances between points and centroids at each iteration (Bunschoten, 1999). The iteration number is set to 5.
We test the three \nys methods over the following high-dimensional datasets from the UC Irvine Machine Learning Repository\footnote{https://archive.ics.uci.edu/ml/index.php}:
Abalone, Anuran Calls (MFCC), Covertype. See Table \ref{tab:data} for the statistics of the datasets.
The datasets are standardized to have zero mean and unit variance.
For each kernel, we choose $\sigma$ to be the half radius of the standardized dataset, where the radius is defined as the maximum distance from a point to the center. The choice ensures that the resulting kernel matrices have fast singular value decay and are thus suitable for low-rank approximations. For the Covertype dataset ($n=581012$), the \nys approximation error is evaluated over 10000 randomly sampled points from the dataset.

The error-rank plots in Figures \ref{fig:MQ} -- \ref{fig:Spline} illustrate how the \nys approximation error changes as the number of landmark points increases.
The computational cost associated with each method is shown in the error-time plots in Figures \ref{fig:MQTime} -- \ref{fig:SplineTime}, where the runtime for each method is computed over ten repeated runs and the approximation error for uniform \nys method is averaged over ten runs.

We have the following observations regarding the accuracy and stability of the \nys schemes under comparison for approximating different kinds of \emph{indefinite} kernel matrices.
\begin{enumerate}
    \item According to Figures \ref{fig:MQ} -- \ref{fig:SplineTime}, we see that, for different indefinite kernels and datasets, the anchor net method achieves overall the best accuracy for a given approximation rank (i.e., the number of landmark points) and requires least computation time. It is overall more stable than uniform sampling and $k$-means methods. We also note that the advantage of anchor net method is more prominent for large scale high dimensional datasets like Covertype.
    \item Compared to uniform sampling and anchor net methods, the $k$-means clustering can be quite unstable as one increases the approximation rank, as illustrated in Figures \ref{fig:MQ}-right, \ref{fig:Sigmoid}, \ref{fig:Spline}. This is due to the heuristic and iterative nature of the $k$-means clustering: the computed cluster centers after a few iterations are unpredictable, and it's hard to predict whether the final output can yield a better \nys approximation accuracy than the initial guess. 
    \item It's easy to see from Figure \ref{fig:MQ}-right and Figure \ref{fig:Sigmoid}-right that, the anchor net method converges much faster to a fixed accuracy.
    \item For large-scale datasets in high dimensions (e.g. Covertype), the $k$-means \nys method is quite slow and is outperformed by uniform sampling according to Figures \ref{fig:MQTime}(right) -- \ref{fig:SplineTime}(right).
    \item For the sigmoid kernel with MFCC dataset in Figure \ref{fig:SigmoidTime}-middle, all three \nys schemes display oscillatory behaviors, but the anchor net method stays at a much lower error level, so it actually oscillates with a much smaller amplitude than the other two methods.
    \item We see that indefinite kernel matrices are in general much harder for \nys methods to approximate than SPSD matrices. This is because  indefinite kernels have both positive  and negative eigenvalues around the origin. As a result, the \nys approximation is more sensitive to numerical instability. Existing general \nys schemes (uniform sampling and $k$-means) can be quite unstable for indefinite kernels, while the anchor net method is very robust and meanwhile achieves better accuracy with less computational cost. 
\end{enumerate}

\begin{figure}[htbp] 
    \centering 
    \includegraphics[scale=.22]{./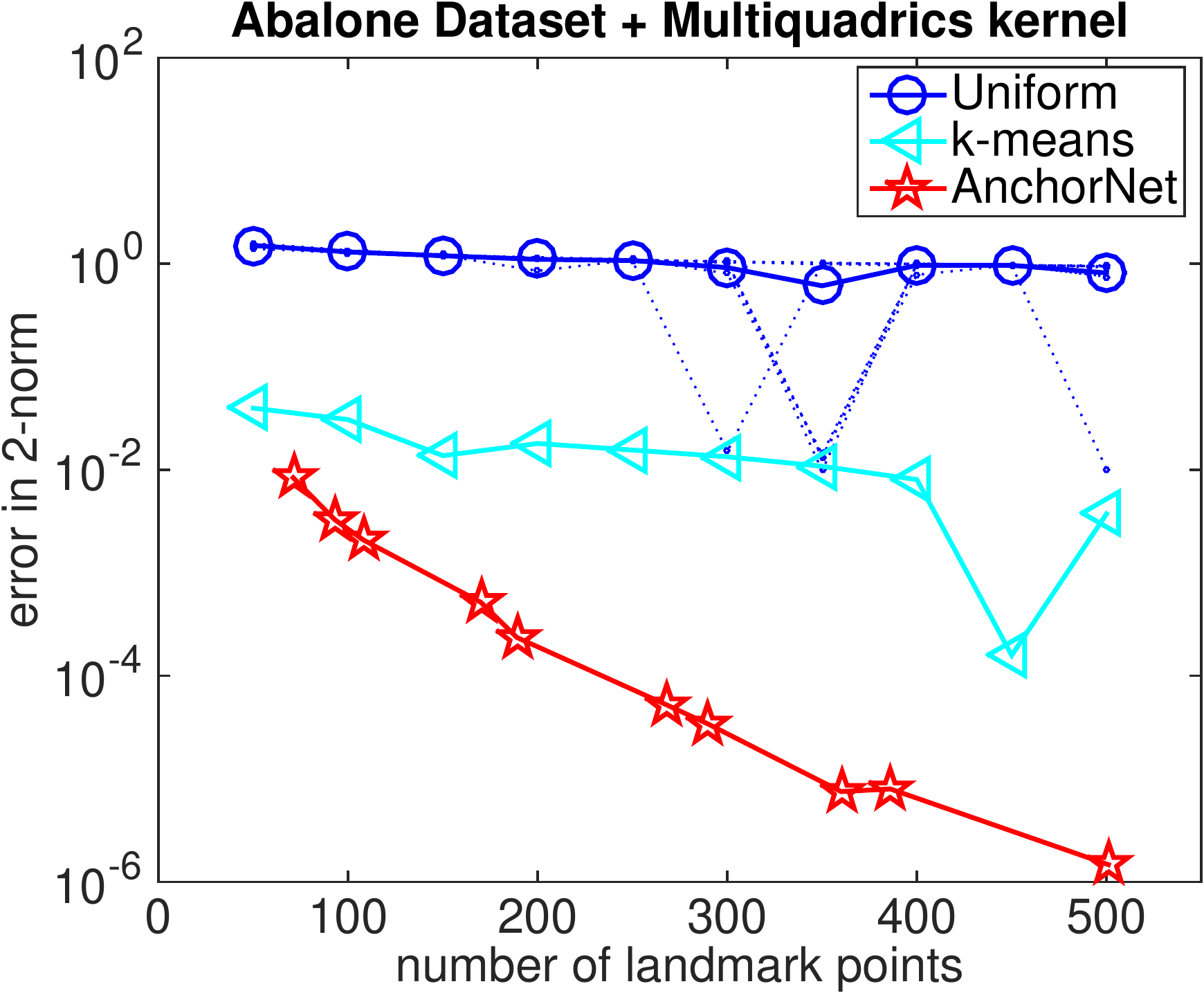}
    \includegraphics[scale=.22]{./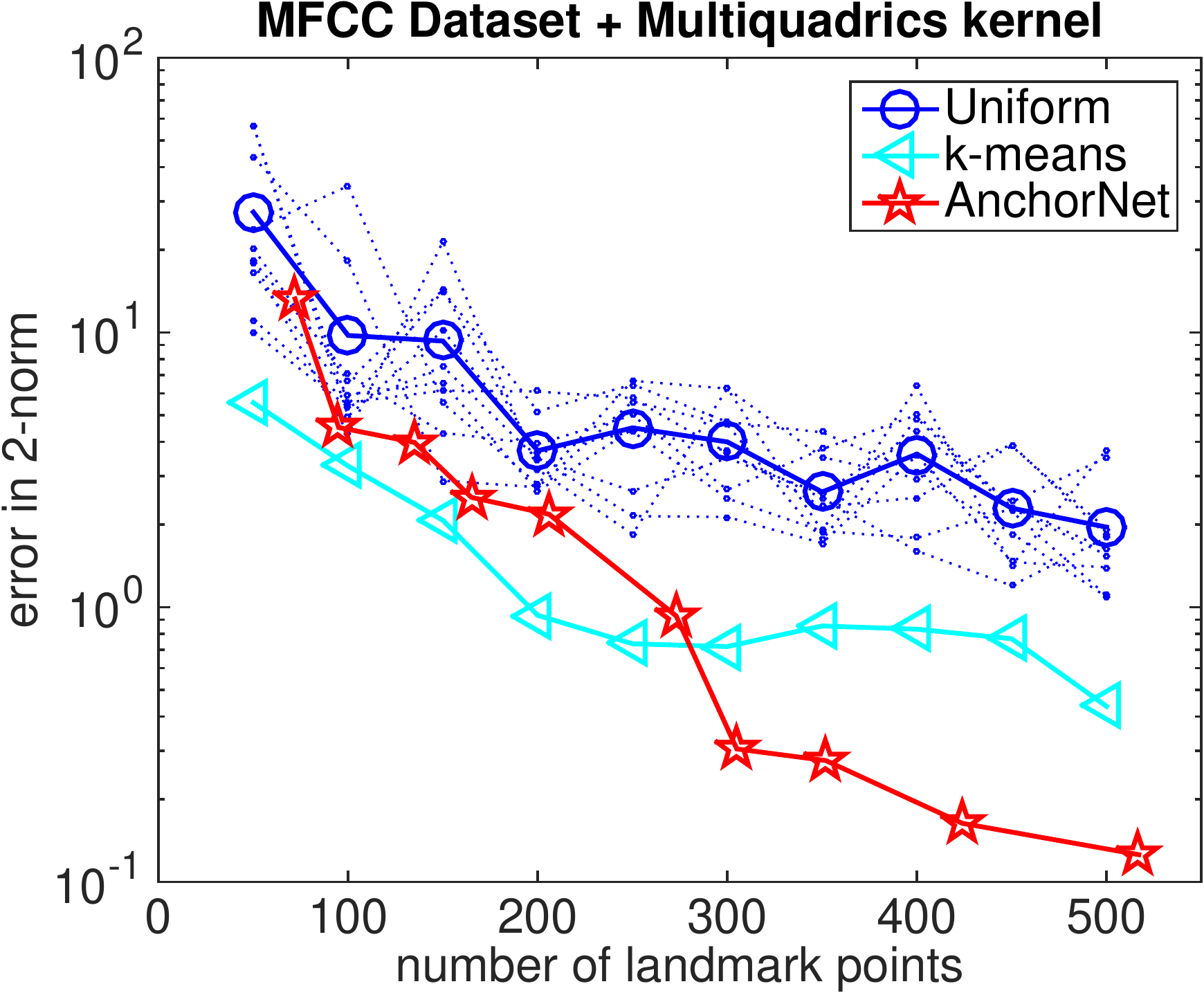}
    \includegraphics[scale=.22]{./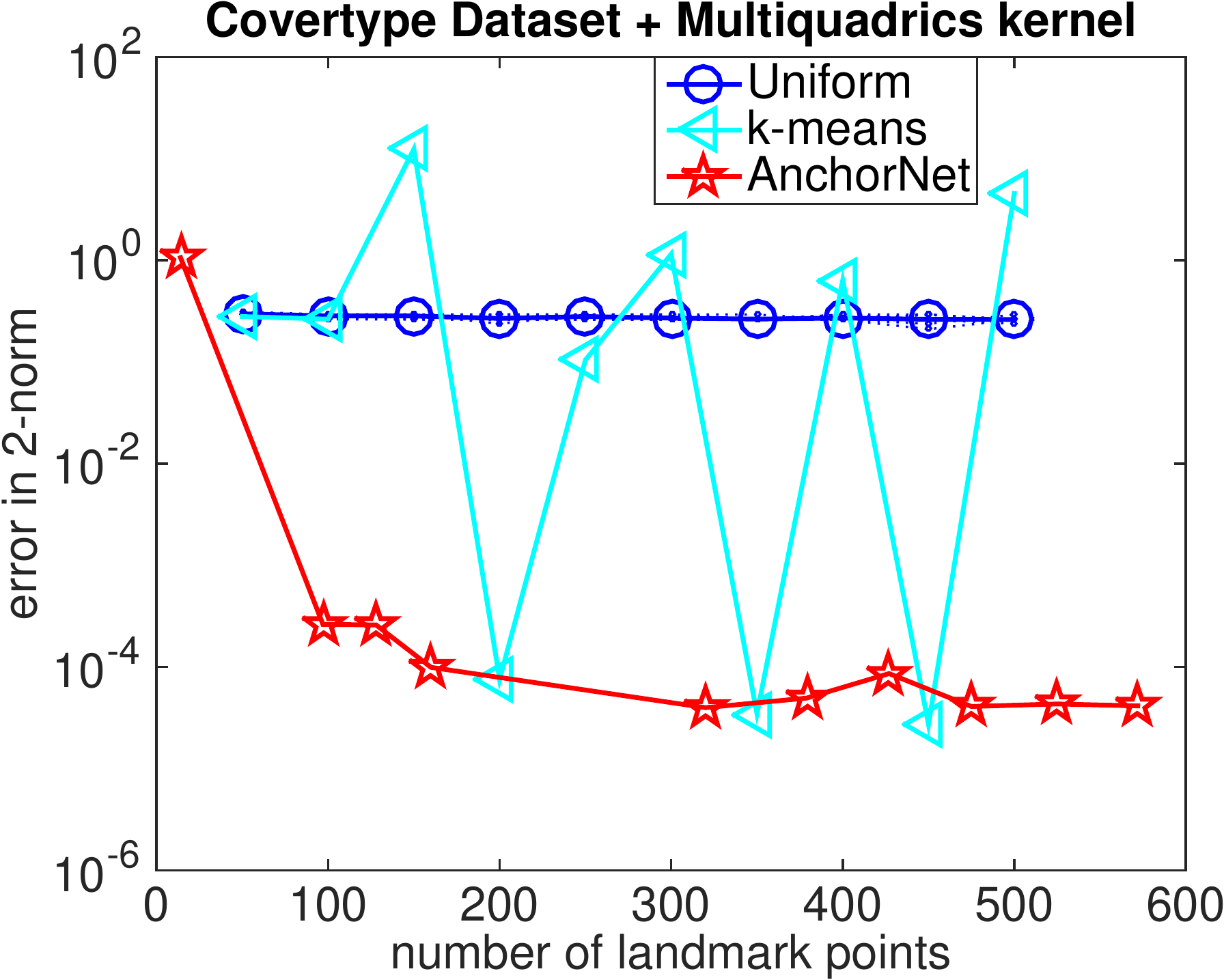}
    \caption{Multiquadrics: Abalone (left), MFCC (middle), Covertype (right)}
    \label{fig:MQ} 
\end{figure}

\begin{figure}[htbp] 
    \centering 
    \includegraphics[scale=.22]{./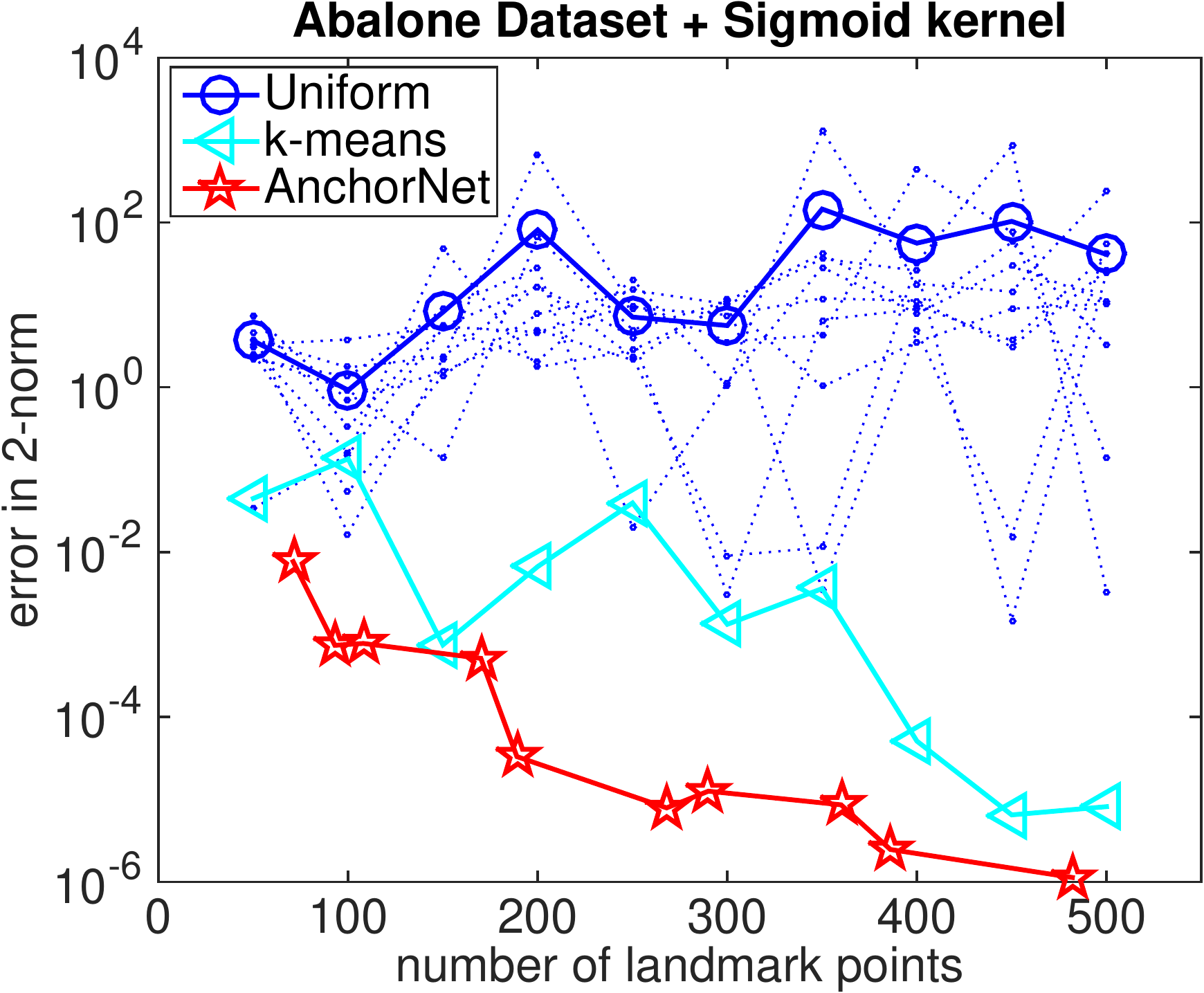}
    \includegraphics[scale=.22]{./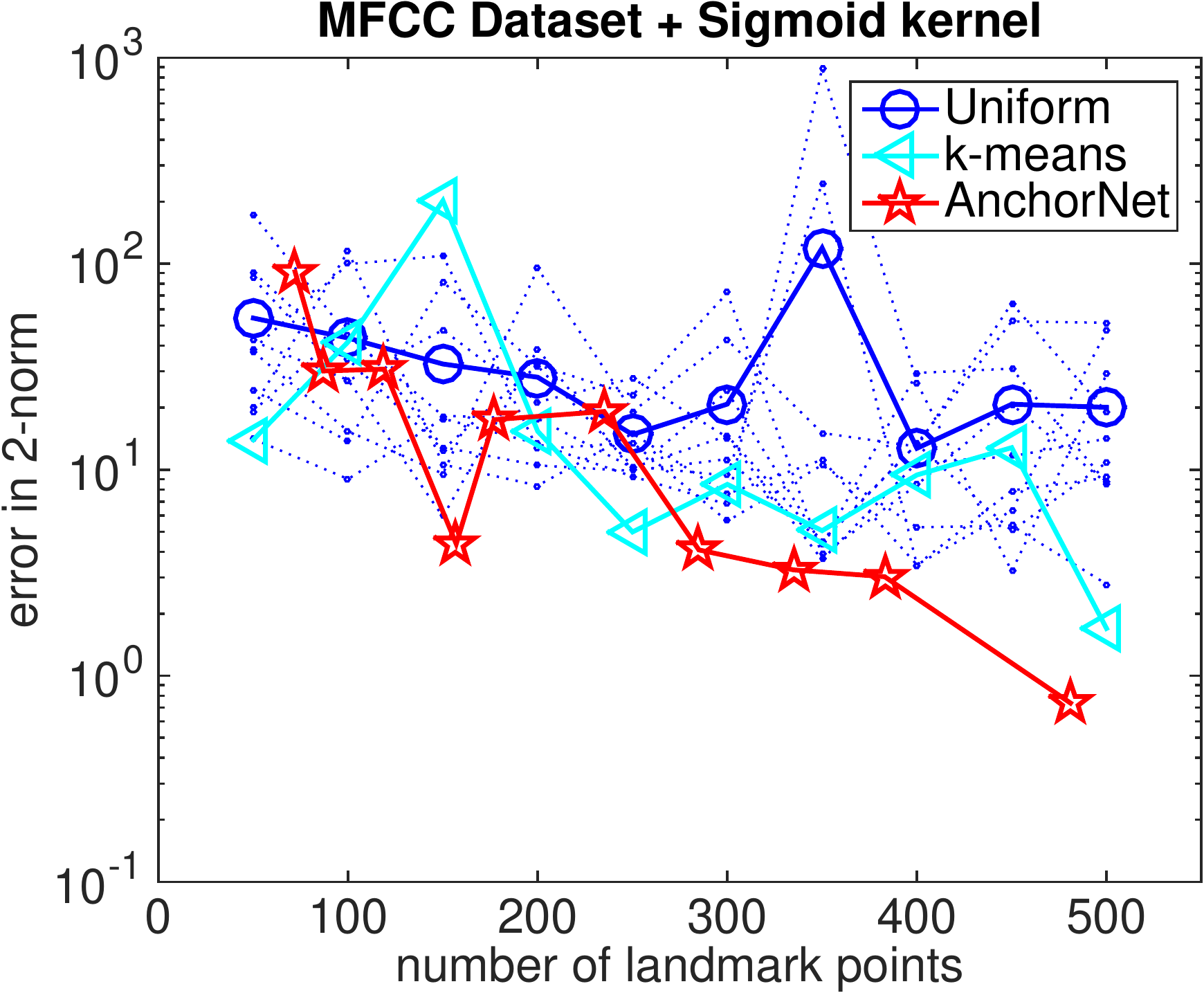}
    \includegraphics[scale=.22]{./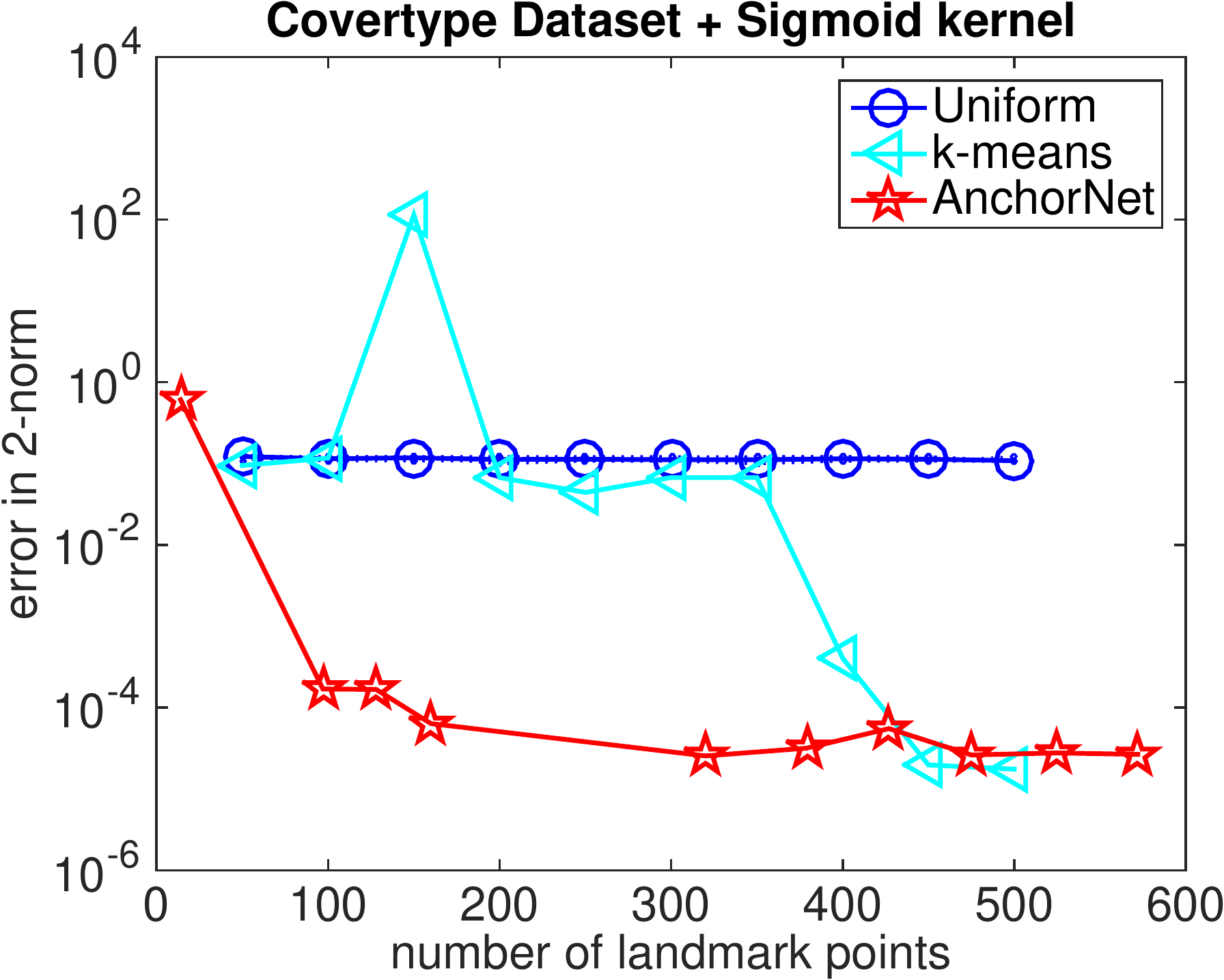}
    \caption{Sigmoid: Abalone (left), MFCC (middle), Covertype (right)}
    \label{fig:Sigmoid}
\end{figure}

\begin{figure}[htbp] 
    \centering 
    \includegraphics[scale=.22]{./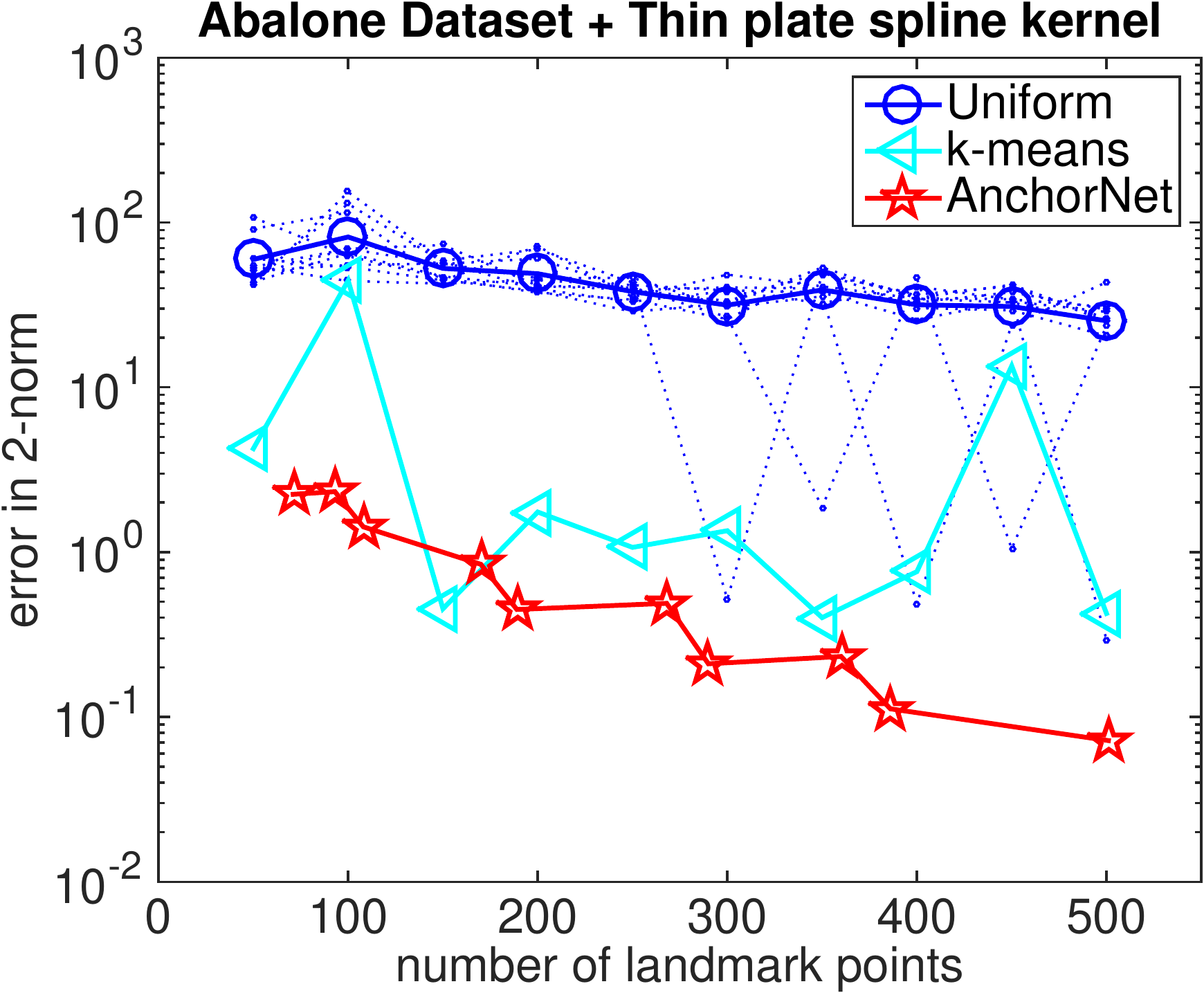}
    \includegraphics[scale=.22]{./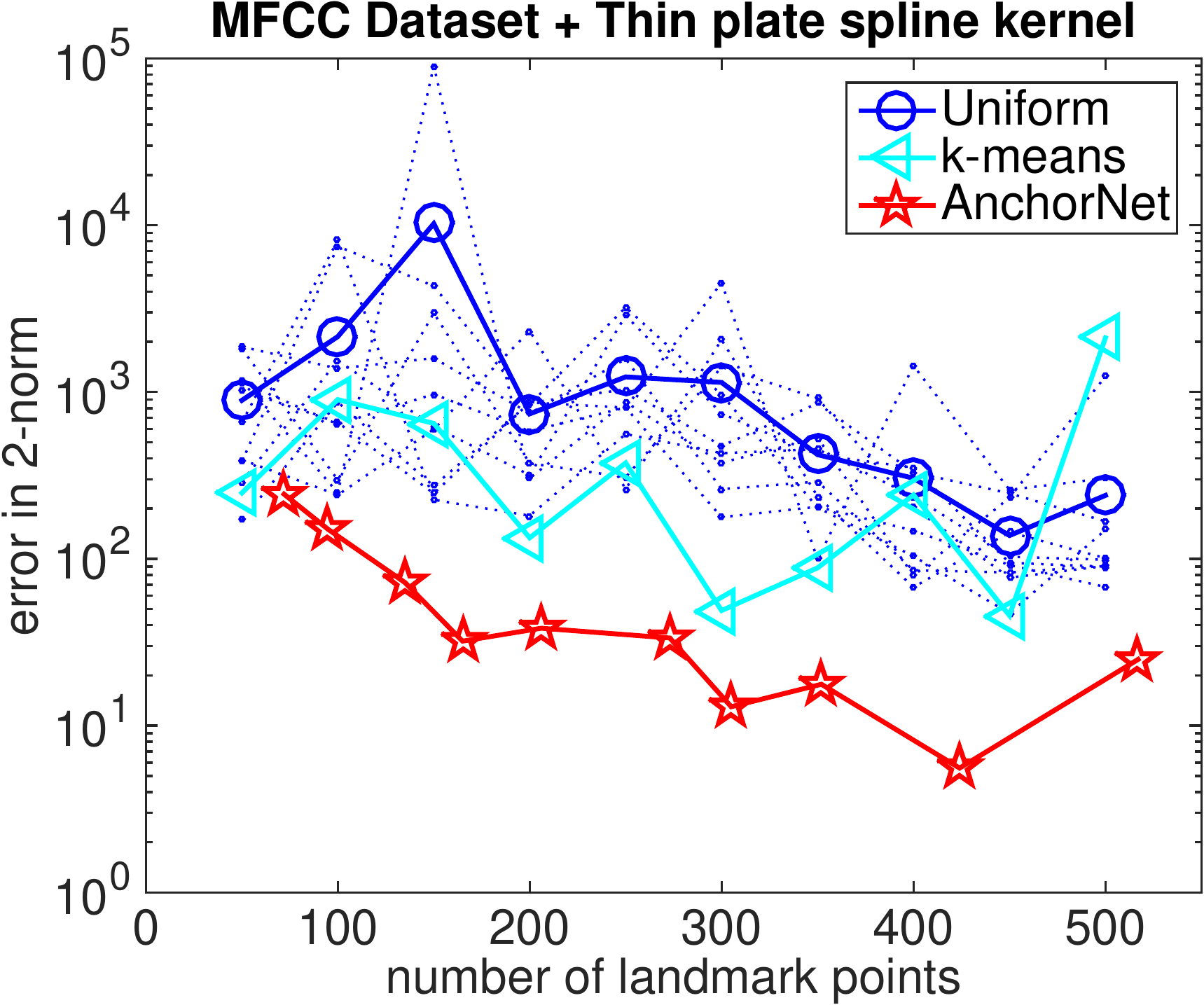}
    \includegraphics[scale=.22]{./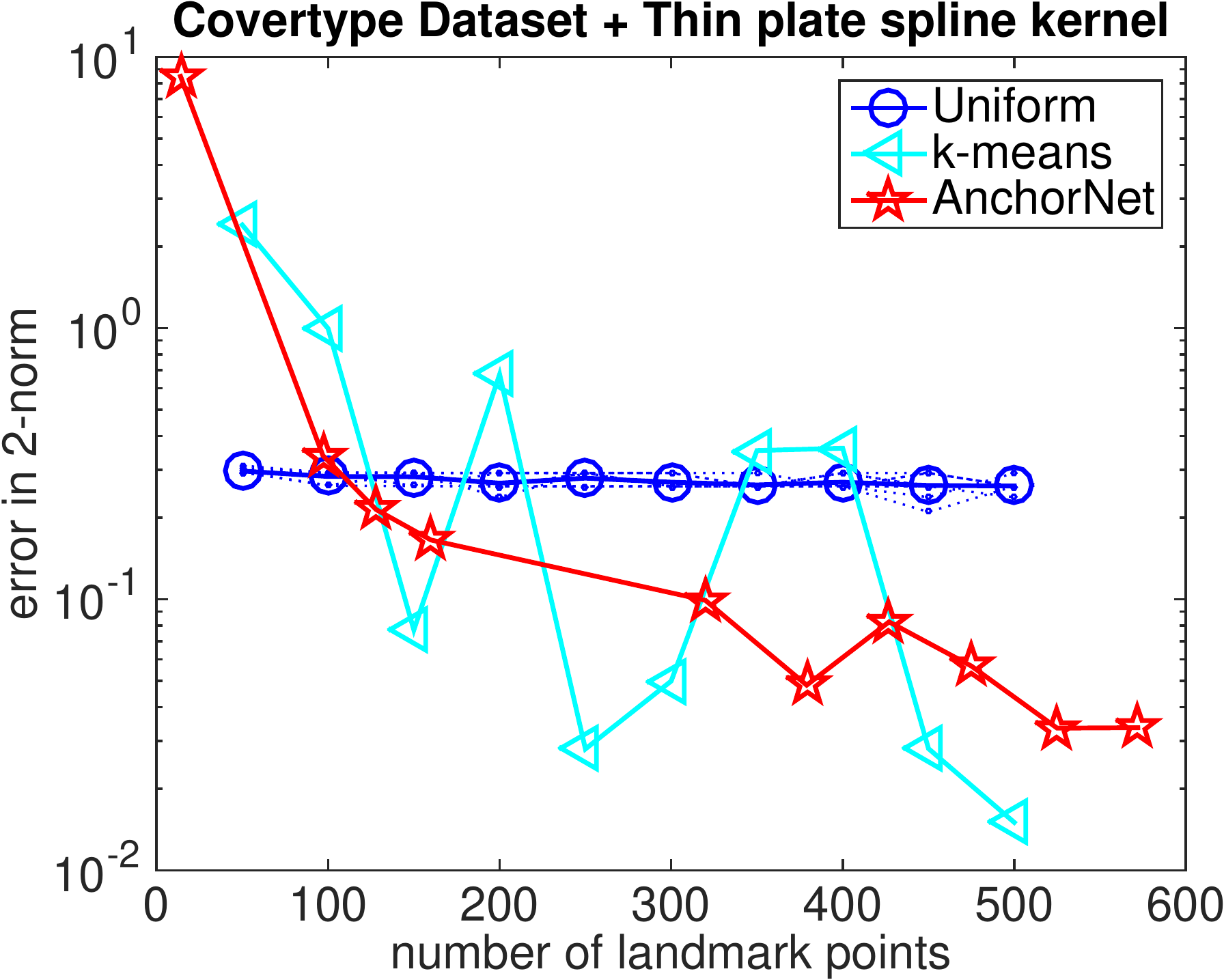}
    \caption{Thin plate spline: Abalone (left), MFCC (middle), Covertype (right)}
    \label{fig:Spline} 
\end{figure}

\begin{figure}[htbp] 
    \centering 
    \includegraphics[scale=.22]{./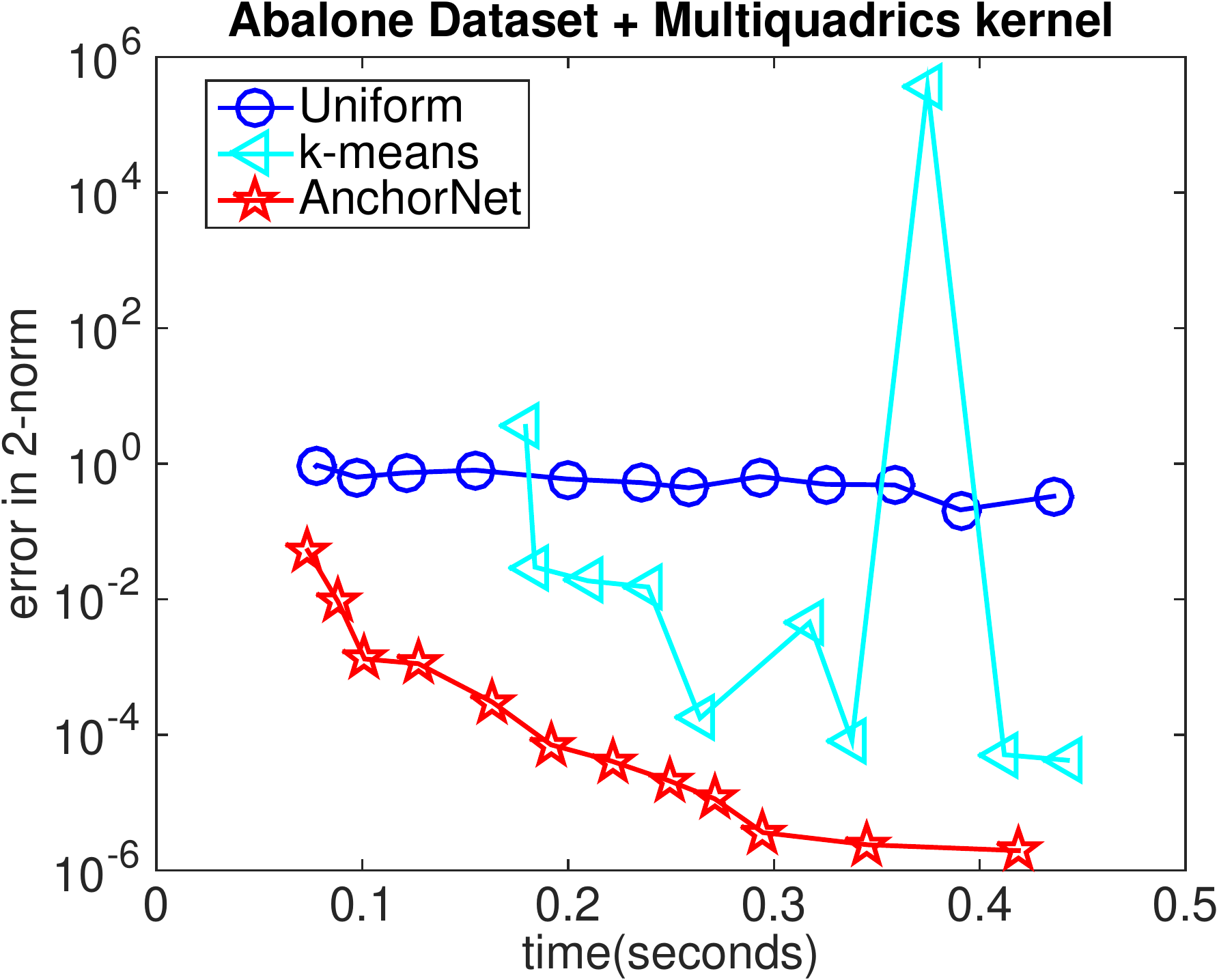}
    \includegraphics[scale=.22]{./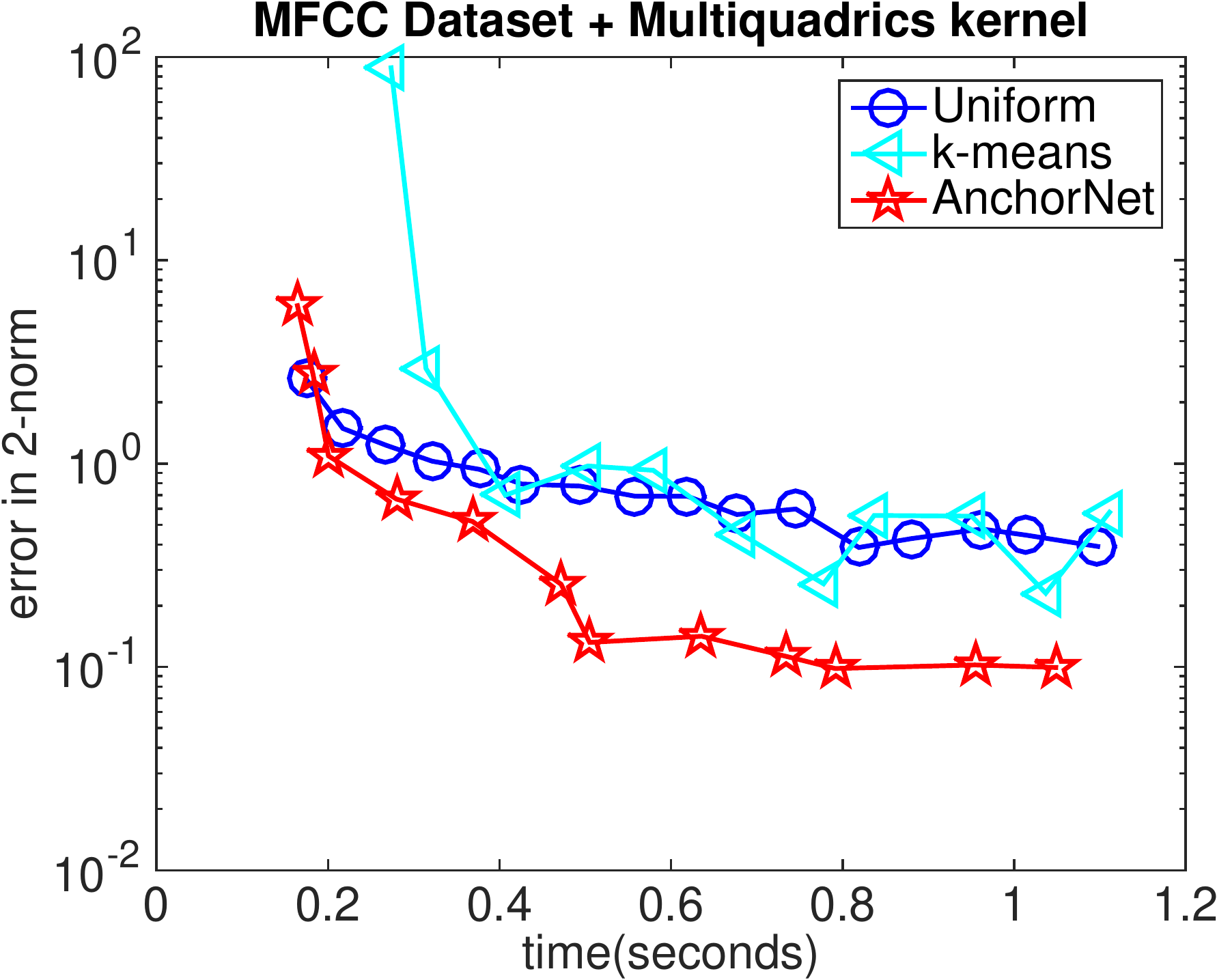}
    \includegraphics[scale=.22]{./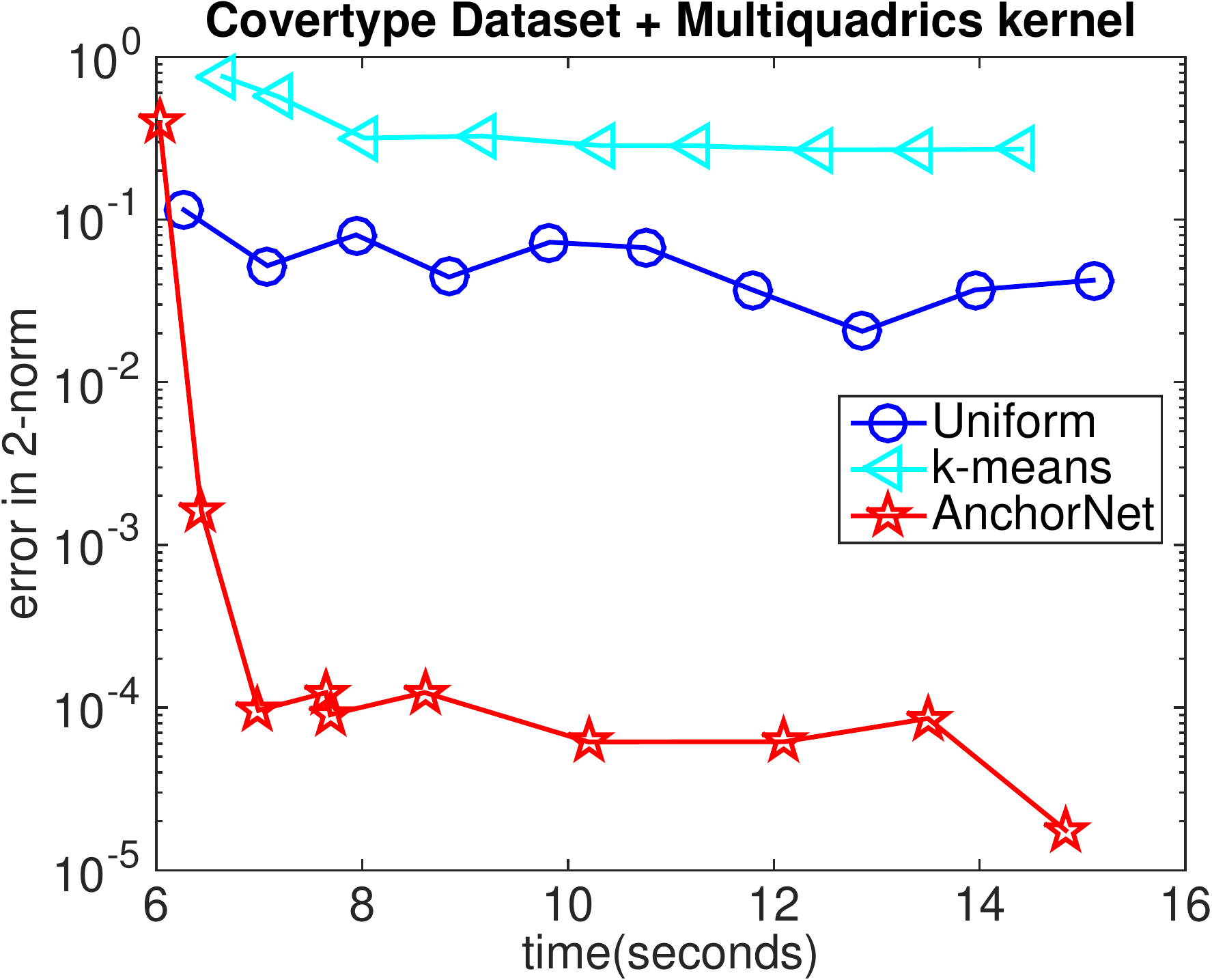}
    \caption{Multiquadrics Error-Time plot: Abalone (left), MFCC (middle), Covertype (right)}
    \label{fig:MQTime}
\end{figure}

\begin{figure}[htbp] 
    \centering 
    \includegraphics[scale=.22]{./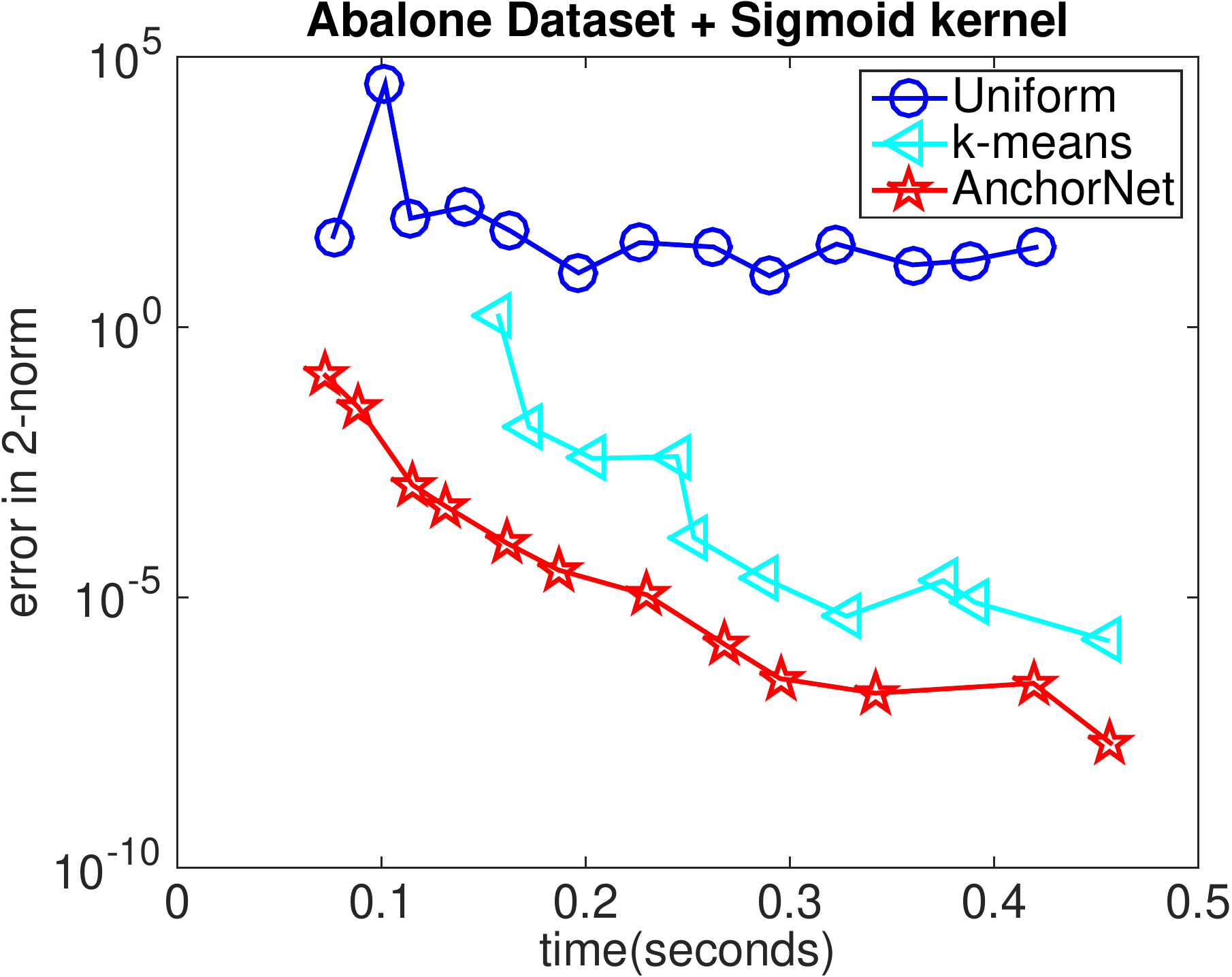}
    \includegraphics[scale=.22]{./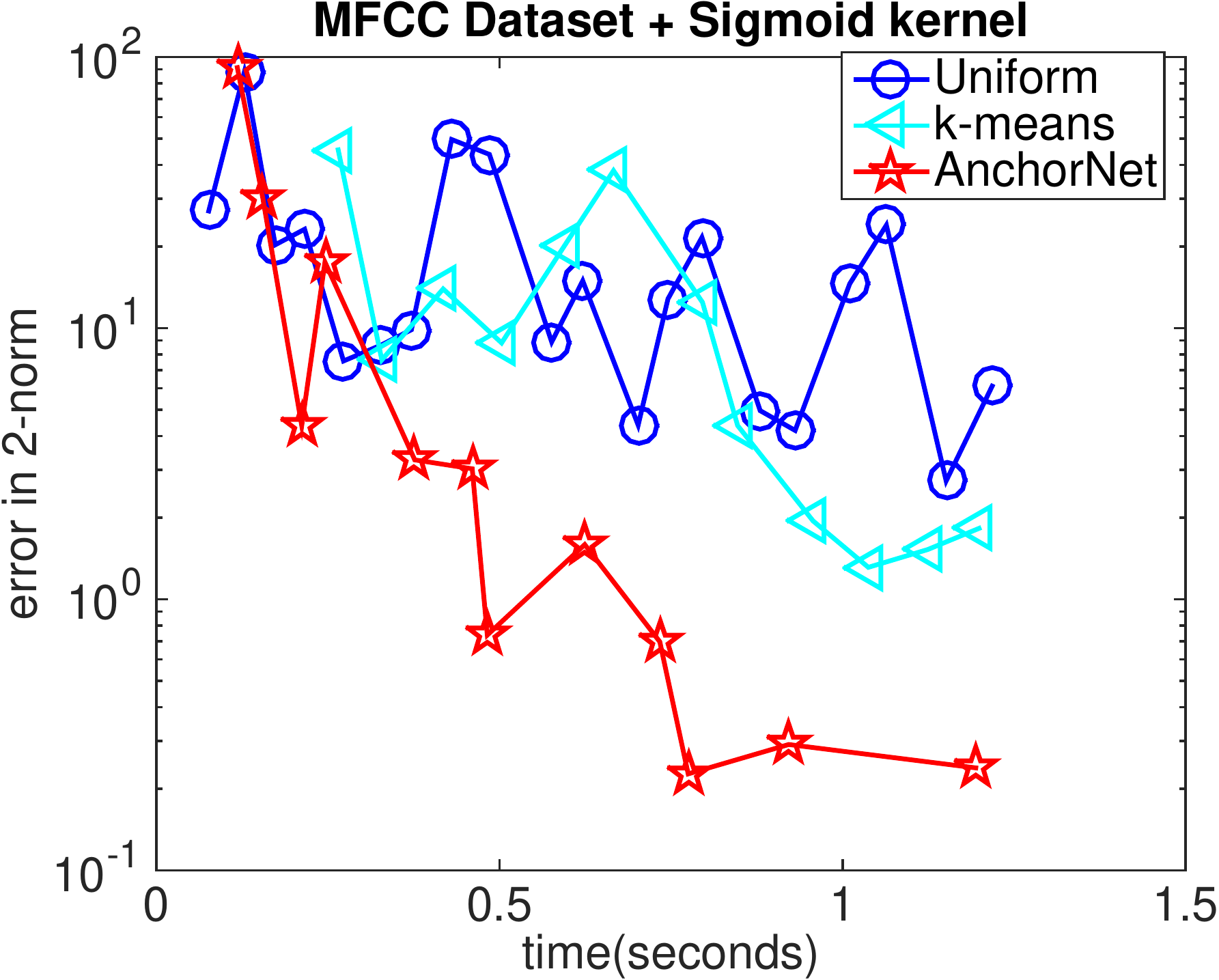}
    \includegraphics[scale=.22]{./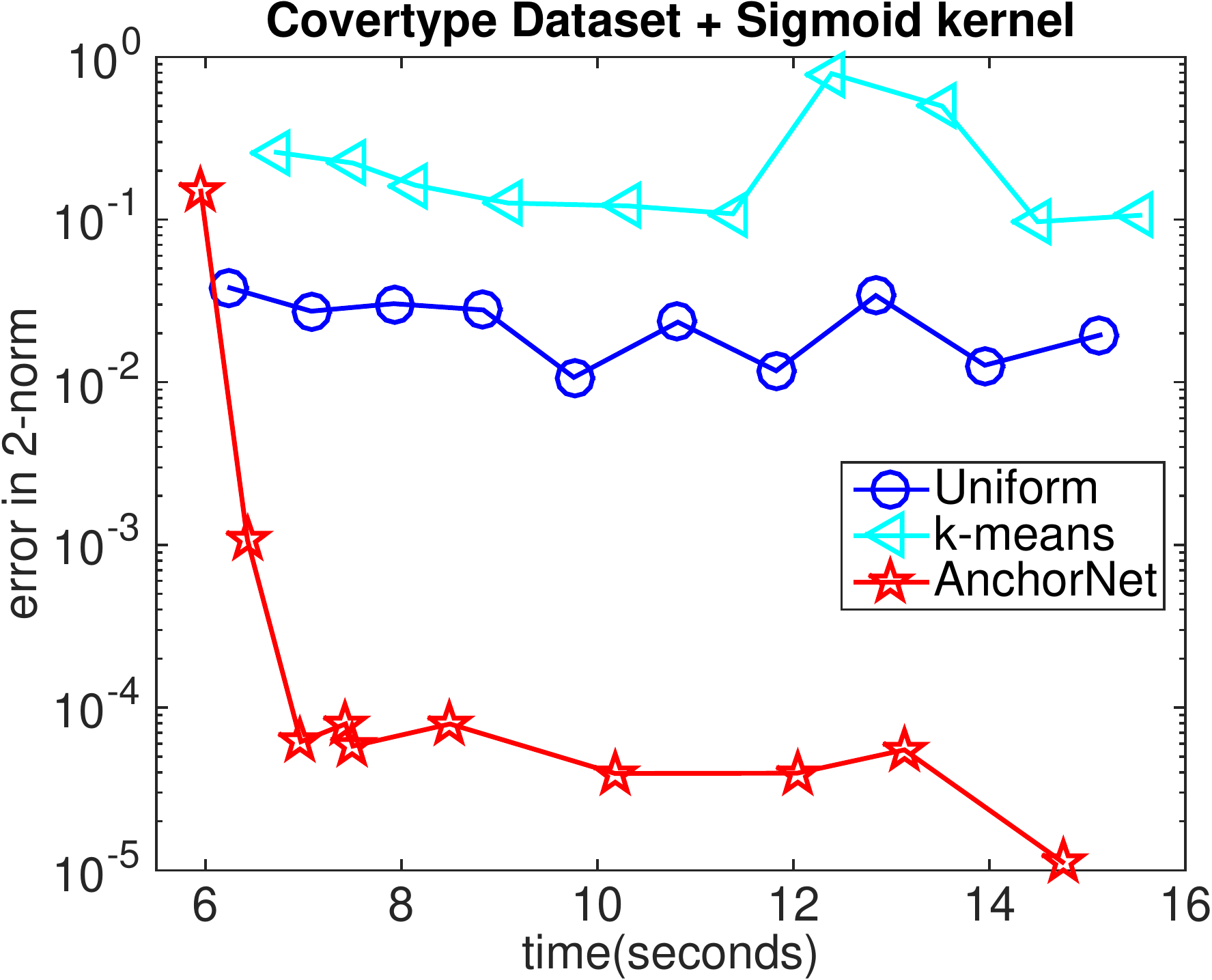}
    \caption{Sigmoid Error-Time plot: Abalone (left), MFCC (middle), Covertype (right)}
    \label{fig:SigmoidTime}
\end{figure}

\begin{figure}[htbp] 
    \centering 
    \includegraphics[scale=.22]{./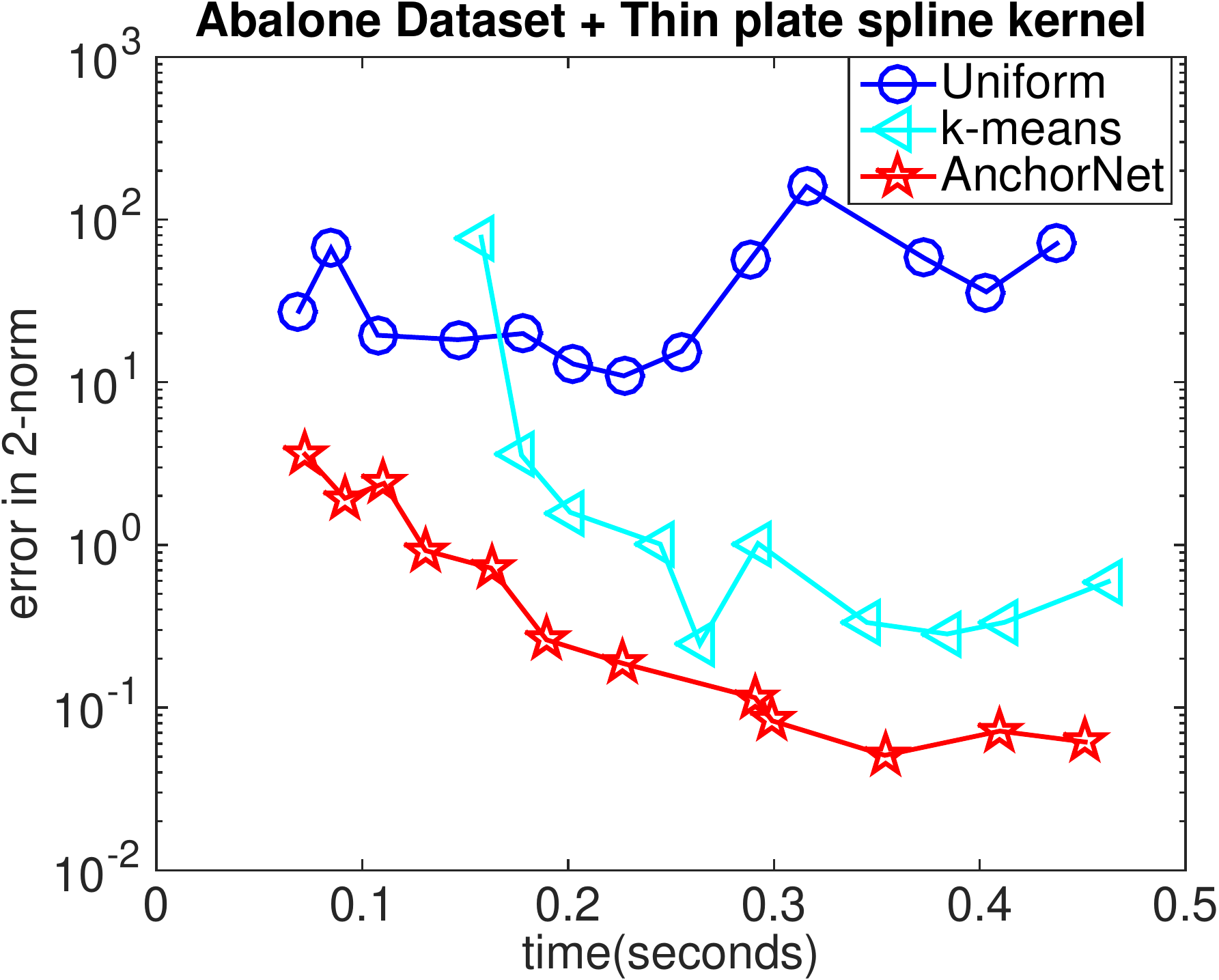}
    \includegraphics[scale=.22]{./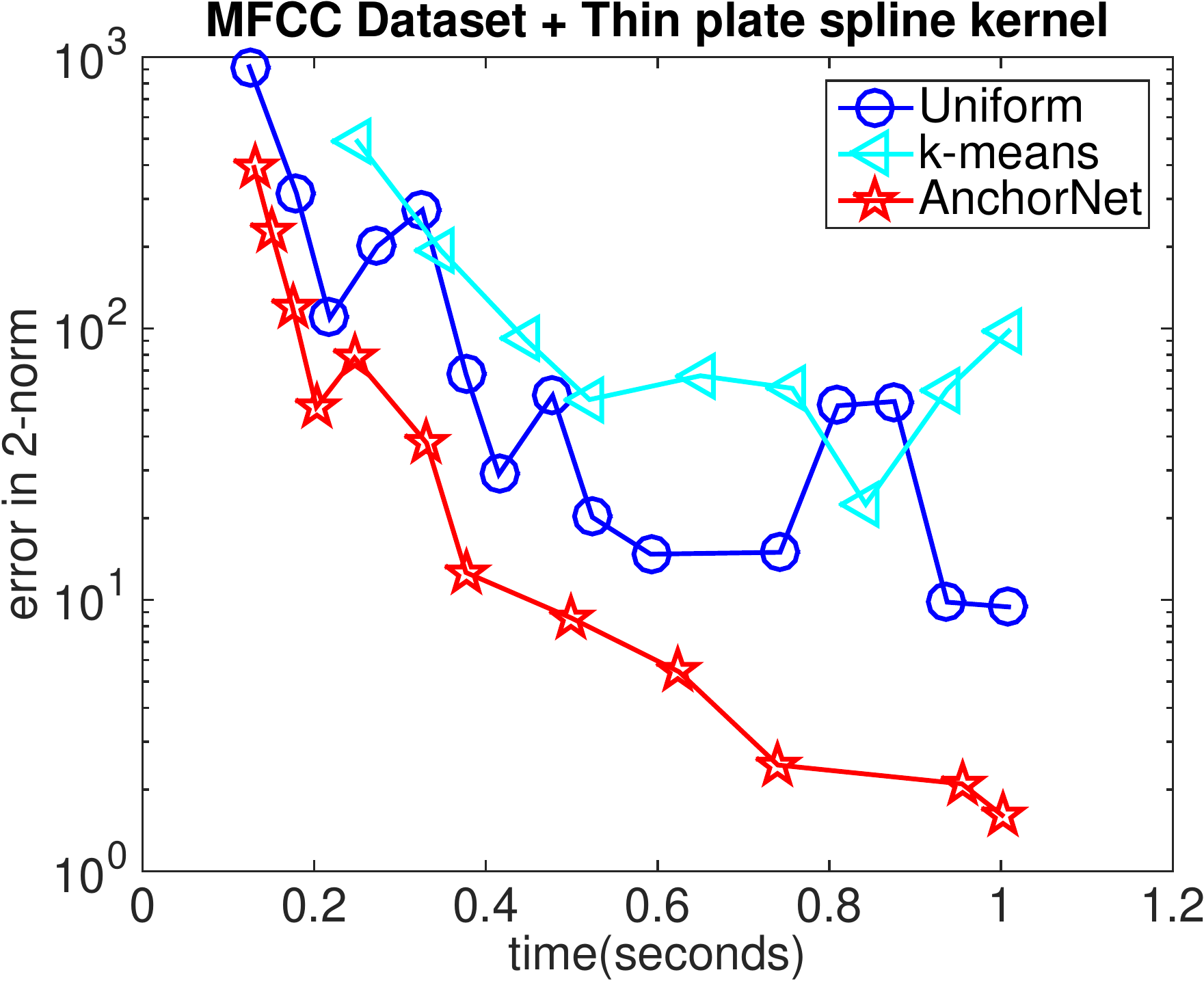}
    \includegraphics[scale=.22]{./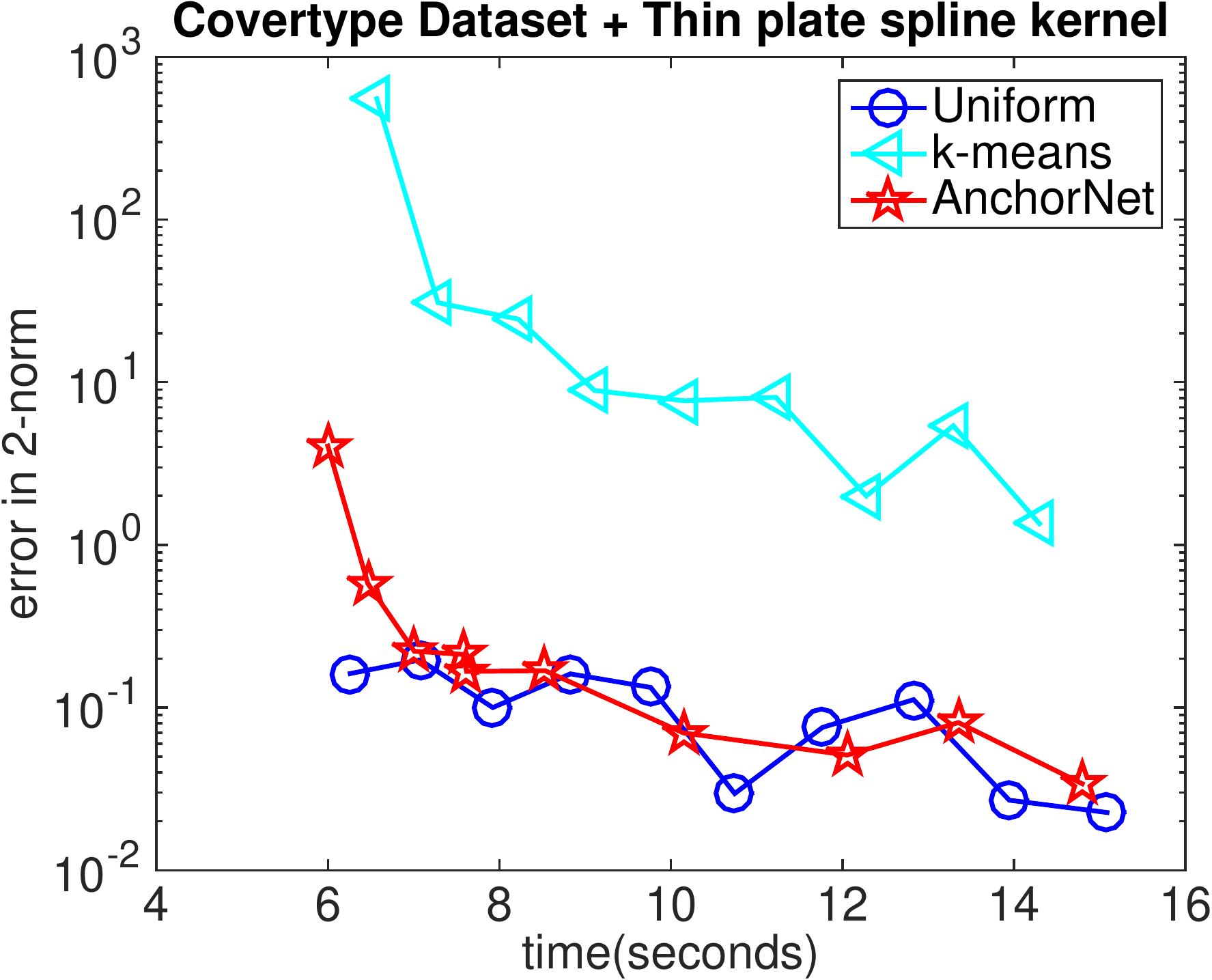}
    \caption{Thin plate spline Error-Time plot: Abalone (left), MFCC (middle), Covertype (right)}
    \label{fig:SplineTime} 
\end{figure}

\subsection{Geometry of landmark points and numerical issues for indefinite kernels}
\label{sub:DK}
In this subsection, we investigate two issues: 
(1) how the geometry of landmark points impacts the accuracy as well as numerical stability of the resulting \nys approximation;
(2) how the stabilization techniques \eqref{eq:nyspinv}-\eqref{eq:nysQR} influence the accuracy of \nys approximation.

\textbf{Geometry of landmark points.}
To illustrate the effect of geometry of landmark points on the \nys approximation, we consider the sigmoid kernel with $\sigma=1$ over a two-dimensional  highly non-uniform dataset illustrated in Figure \ref{fig:dk500}-left. The singular values of the corresponding kernel matrix decay rapidly, and as a result, \nys approximation is subject to numerical instability if landmark points are not well-chosen.

In terms of the selection of landmark points, it can be clearly seen from Figure \ref{fig:dk500} that both uniform sampling and $k$-means clustering tend to generate more landmark points in denser regions of the dataset, for example, around $(0.4,0.3)$, $(0.5,0.7)$, etc. This does \emph{not} contribute to a better approximation and, conversely, may lead to numerical instability and possibly a much worse approximation than the one with fewer landmark points. 

As reflected in the error plot in Figure \ref{fig:dk500}-right, over ten repeated runs, uniform sampling often becomes ineffective due to the poor choice of landmark points $S$, which causes the approximation error to blow up when computing $K_{SS}^{+}$. 
The $k$-means \nys method, on the other hand, can sometimes achieve high accuracy when $k$ is small, but becomes quite unstable as $k$ increases. Figure \ref{fig:dk500}-right shows that the $k$-means \nys method breaks down when $k$ increases from around 220 to 440.  {As the number of clusters increases, computing the centroids of the clusters puts more weight on small dense clusters that contain a large number of points close to each other. This will result in more landmark points (centroids) close to those dense clusters, eventually  causing numerical instability when computing the \nys approximation.}
It can be seen that the anchor net method remains robust besides being the most accurate as the number of landmark points increases.
Overall, for indefinite kernel matrices and highly non-uniform data, existing \nys methods tend to generate landmark points that result in an extremely unstable and inaccurate approximation, while the anchor net method is able to yield accurate and robust approximation by choosing geometrically well-balanced landmark points with no clumps. 

\textbf{Performance of stabilization techniques.}
We then consider the same problem as in Figure \ref{fig:dk500} but use the ``stabilized" \nys approximations based on \eqref{eq:nyspinv} and \eqref{eq:nysQR} to investigate the impact of using the approximate pseudoinverse $K_{SS,\epsilon}^+$ as compared to $K_{SS}^+$. 
We compute each of the two ``stabilized" \nys approximations in \eqref{eq:nyspinv}  and \eqref{eq:nysQR} using three methods: uniform sampling, $k$-means and anchor net.
To study the impact of truncation in \eqref{eq:nyspinv}  and \eqref{eq:nysQR}, we use four different values of truncation tolerance: $\epsilon =$  $10^{-8}, 10^{-10}, 10^{-12}, 10^{-14}$.
For each $\epsilon$, we compare the performance of three \nys schemes.
The resulting four error-rank plots are shown in Figure \ref{fig:dkpinv}.
As expected, we see that the truncation techniques \emph{do} stabilize the \nys approximation for uniform sampling and $k$-means as compared to Figure \ref{fig:dk500}.
However, we also see that the stabilized \nys approximation  in \eqref{eq:nyspinv}  significantly worsens the \emph{accuracy} of the \nys approximation.
In Figure \ref{fig:dk500}, we see that despite stability, all three methods are able to achieve high accuracy, for example, around 9 to 11 digits when the rank is 200.
According to Figure \ref{fig:dkpinv} (top), with the stabilized approximation, all three methods can at most achieve around 5 digits of accuracy.
Meanwhile, different values of $\epsilon$ yields quite different approximation accuracy and in practice it is hard to determine which one should be used.

The results in Figure \ref{fig:dkpinv} also show that stabilization techniques may harm the accuracy when the original \nys approximation is accurate enough.
This is easily seen in Figure \ref{fig:dkpinv} by comparing stabilized anchor net-based approximation (red solid line) to the original version (red dotted line), where both stabilization techniques lead to orders of magnitude loss of  accuracy.
This can be seen from the right-most plots in Figure \ref{fig:dkpinv}.
We also see that the stabilized approximation may not achieve as good accuracy as the original \nys method.

By looking at the fourth plot $\epsilon=10^{-14}$ on the bottom row in Figure \ref{fig:dkpinv}, we see that the QR-based stabilization in \eqref{eq:nysQR} is accurate when the rank is small but then leads to numerical instability as rank increases (see red solid line).
Neither of the two stabilization techniques is able to achieve the same level of accuracy that the anchor net method attains without stabilization.
Overall, the results show that numerical techniques to resolve stability issues may lead to worse approximation  and the error from the $\epsilon$-truncation may dominate the \nys approximation error, especially in the high accuracy regime.
Thus we see that stabilization techniques are not able to fully resolve the numerical issues associated with \nys method and a more appropriate solution should come from a good choice of landmark points, as demonstrated by the anchor net method in Figure \ref{fig:dkpinv}.

\begin{figure}[htbp] 
    \centering 
    \includegraphics[scale=.15]{./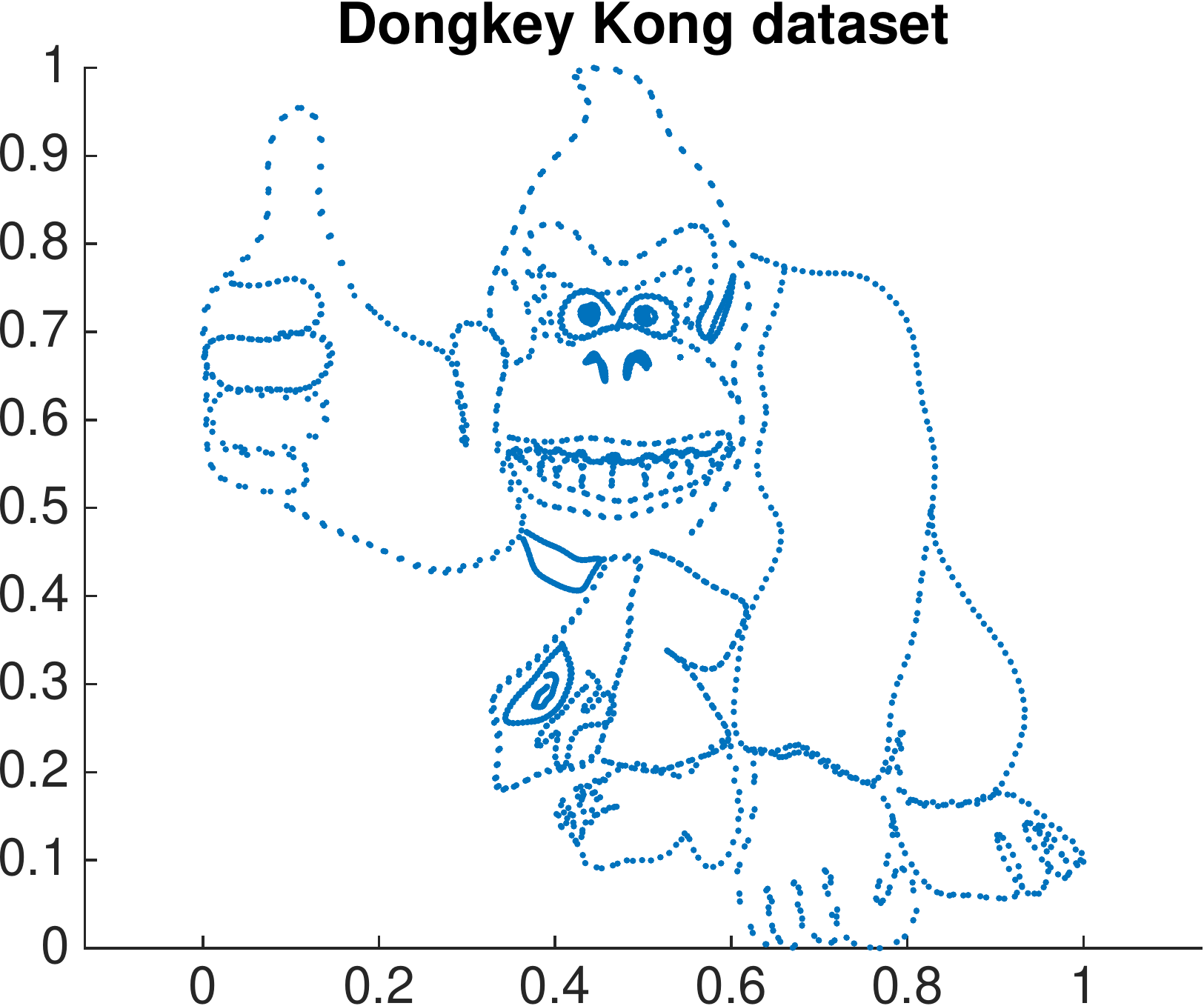}
    \includegraphics[scale=.15]{./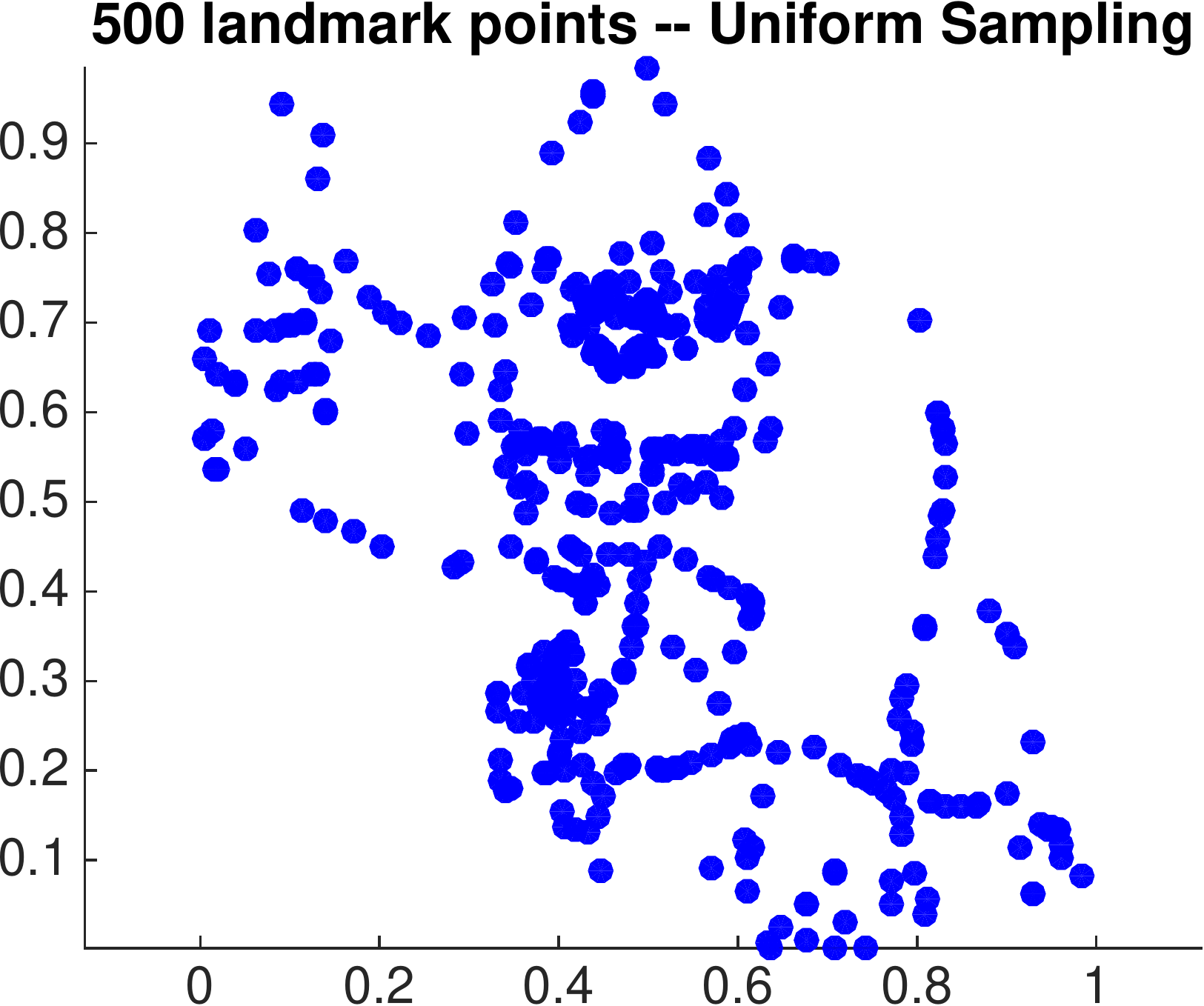}
    \includegraphics[scale=.15]{./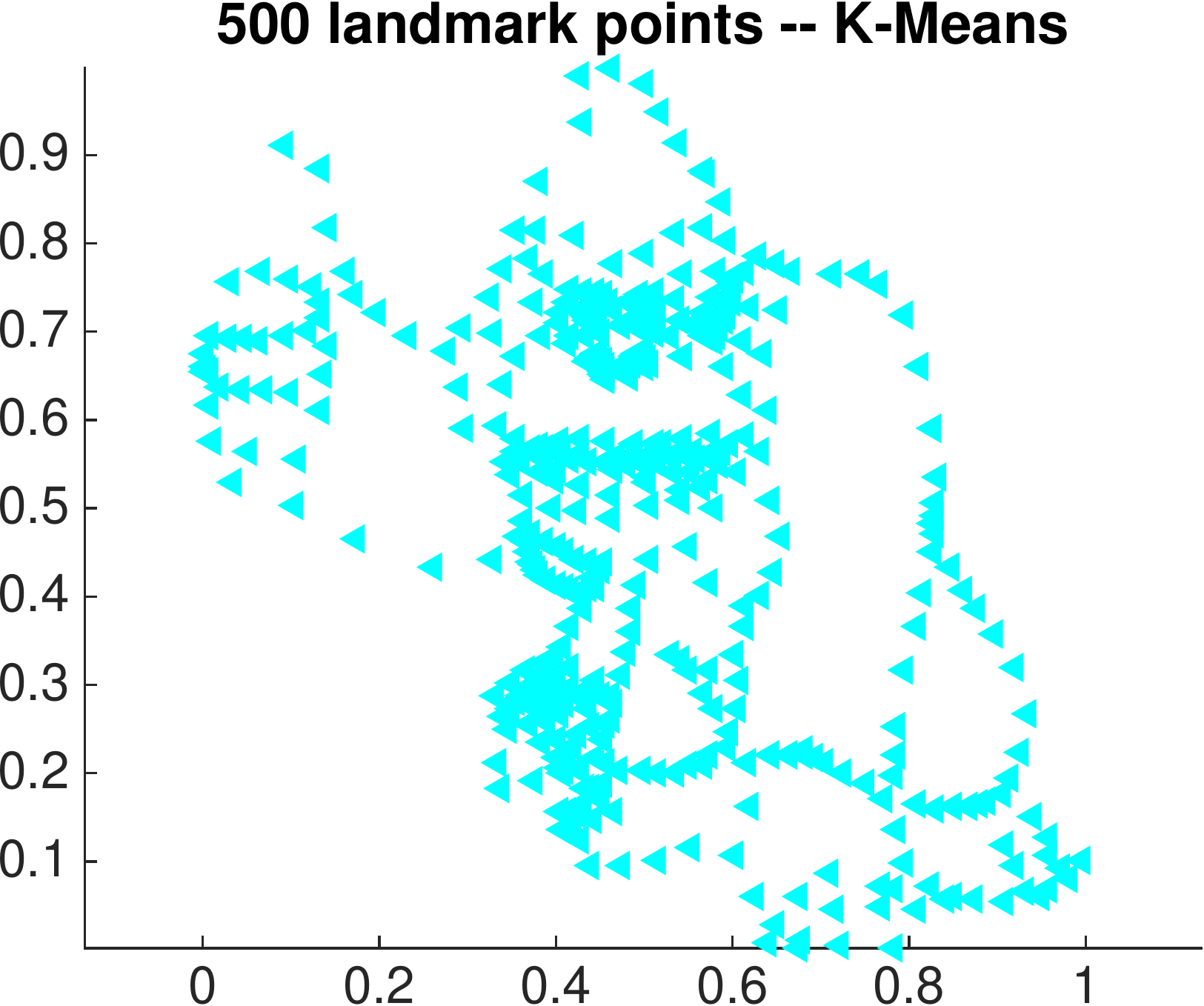}
    \includegraphics[scale=.15]{./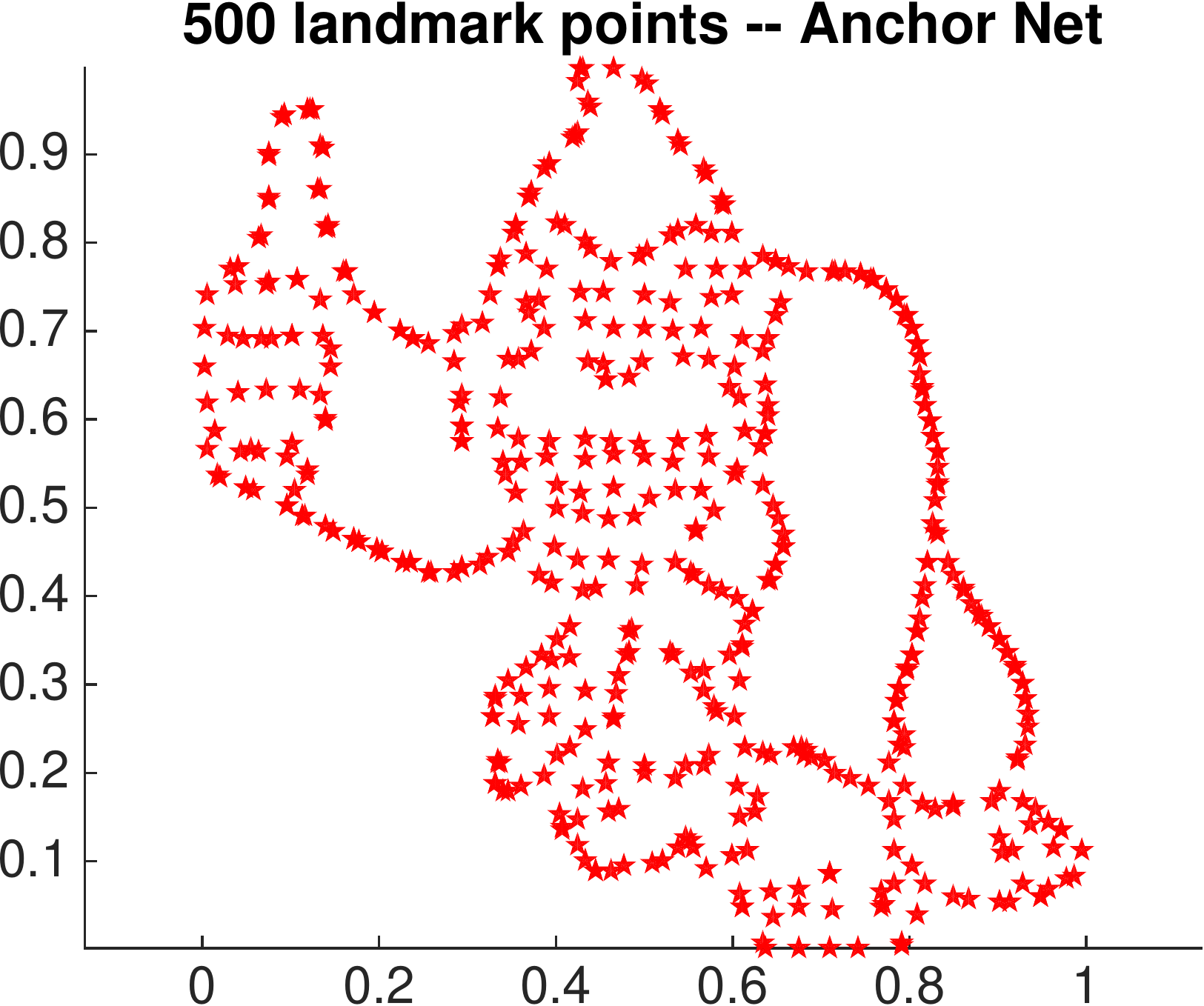}
    \includegraphics[scale=.15]{./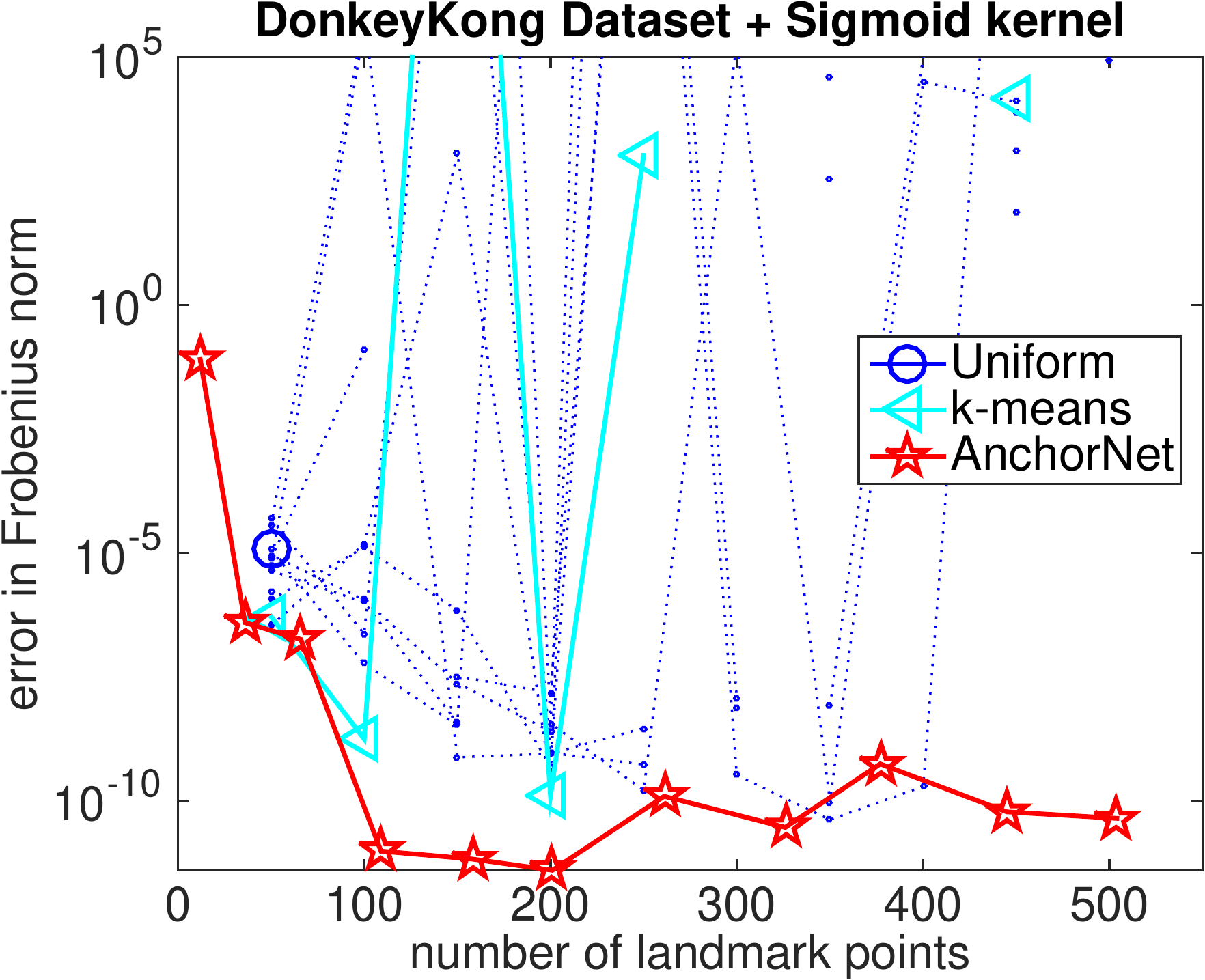}
    \caption{Left to right: Donkey Kong dataset, 500 landmark points generated by three methods, error-rank plot for approximating the sigmoid kernel matrix.}
    \label{fig:dk500} 
\end{figure}

\begin{figure}[htbp] 
    \centering 
    \includegraphics[scale=.13]{./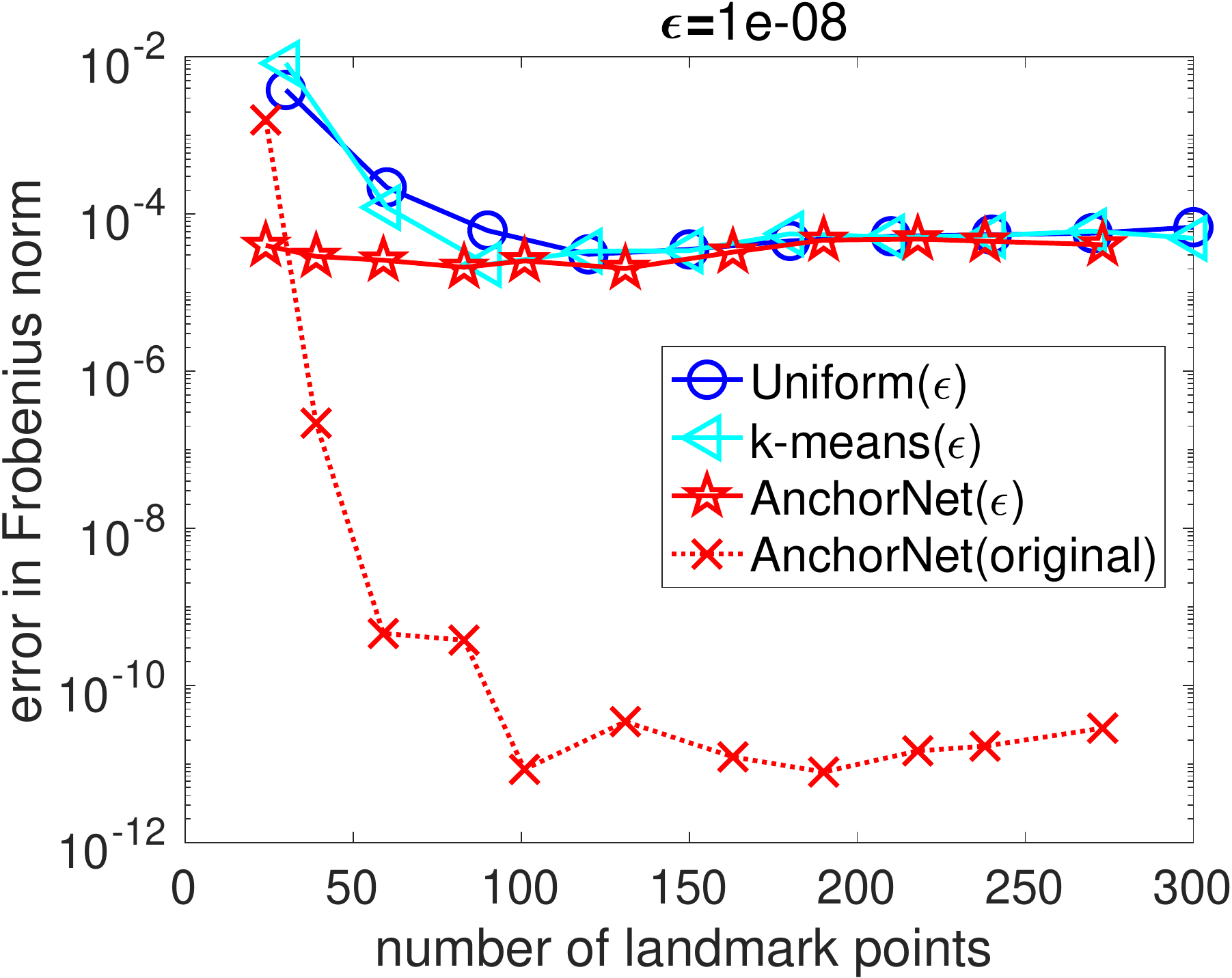}
    \includegraphics[scale=.13]{./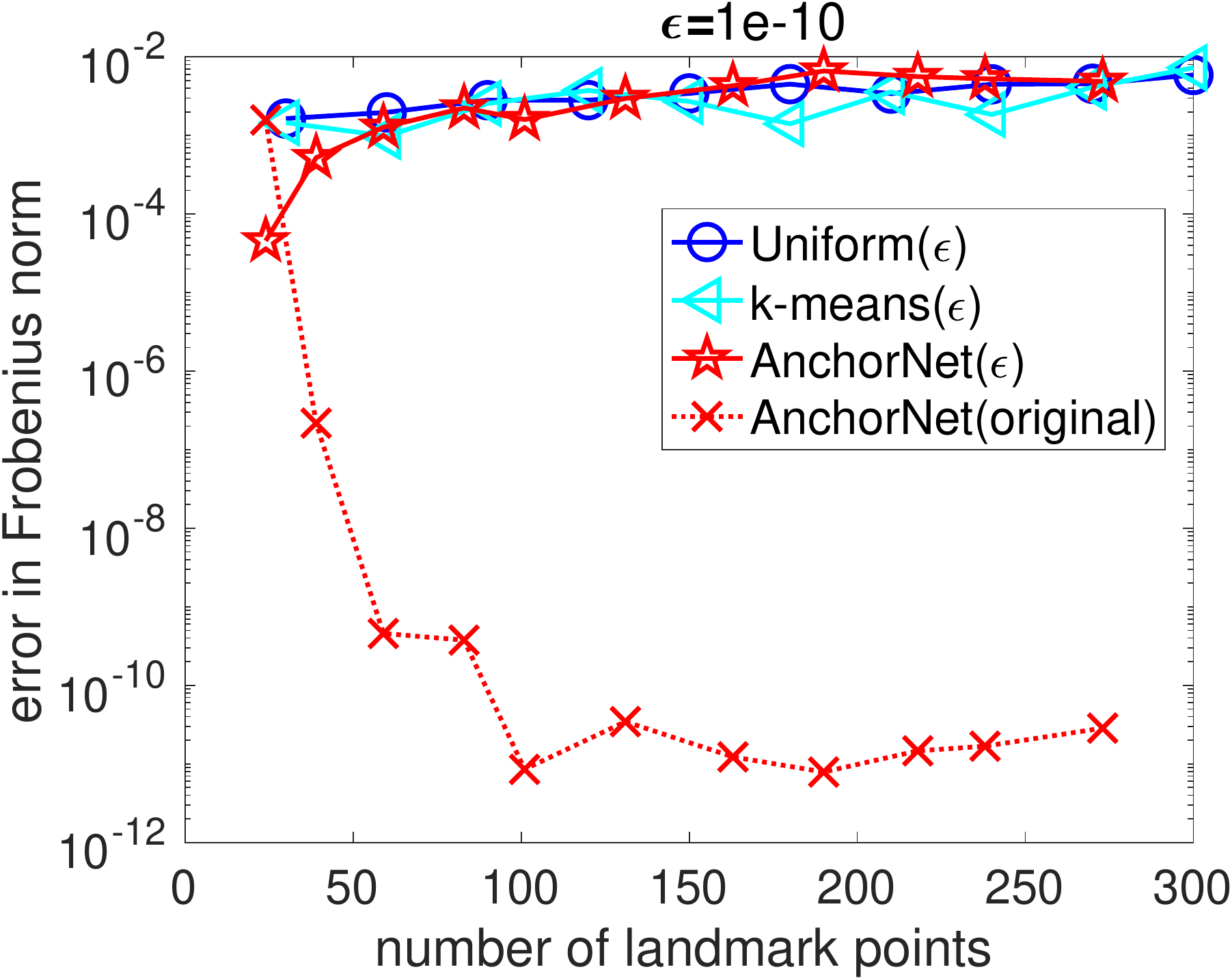}
    \includegraphics[scale=.13]{./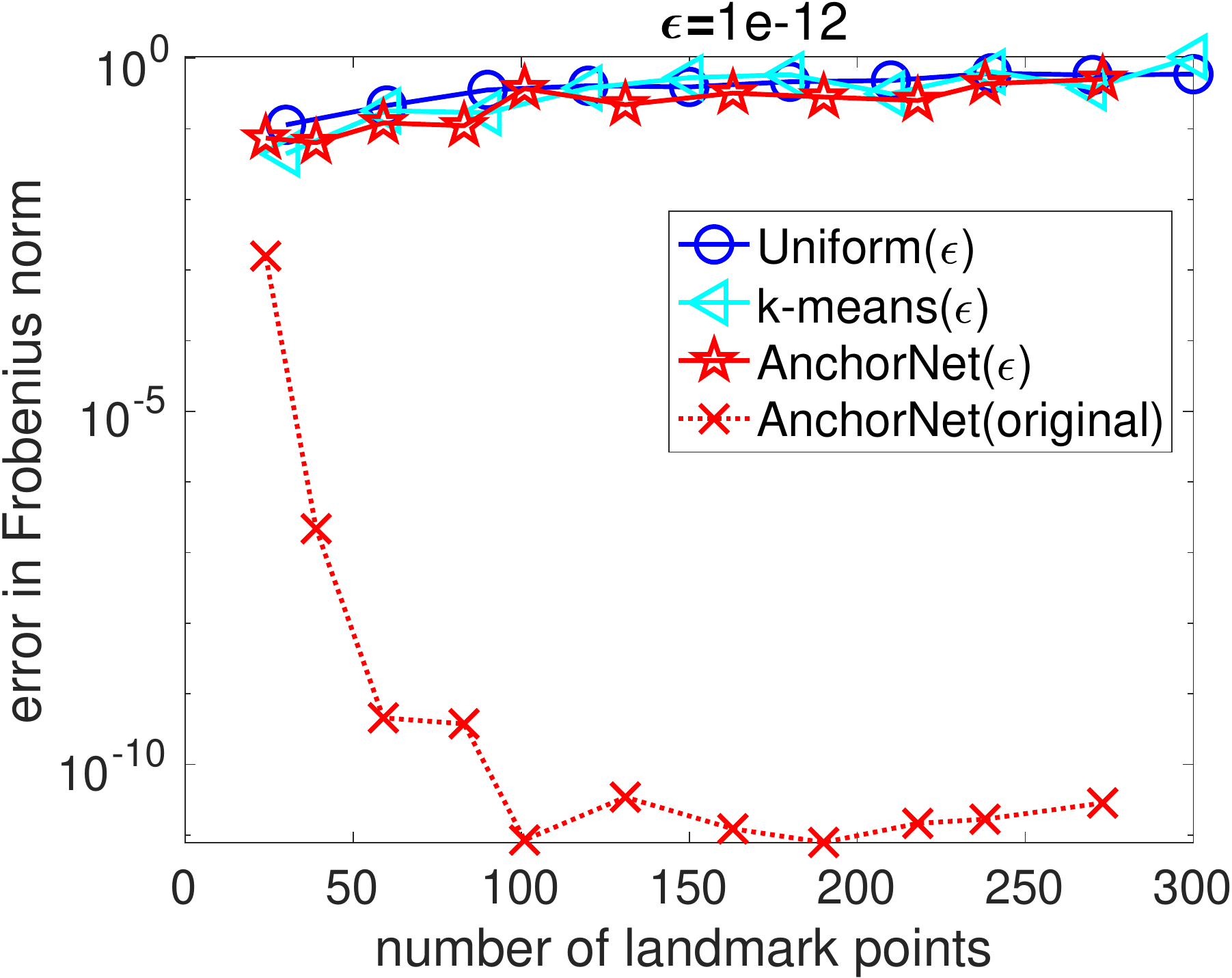}
    \includegraphics[scale=.13]{./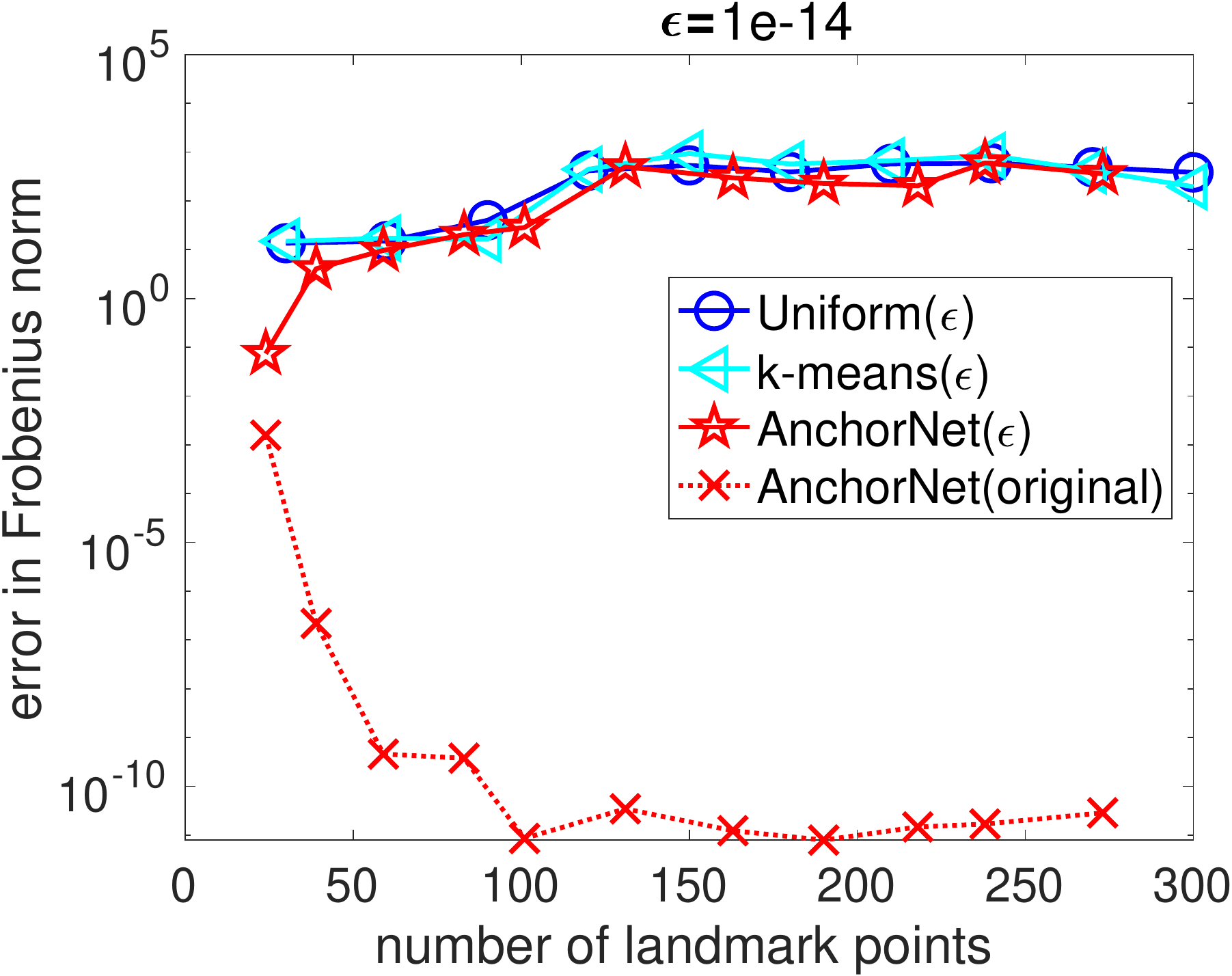}
    \includegraphics[scale=.13]{./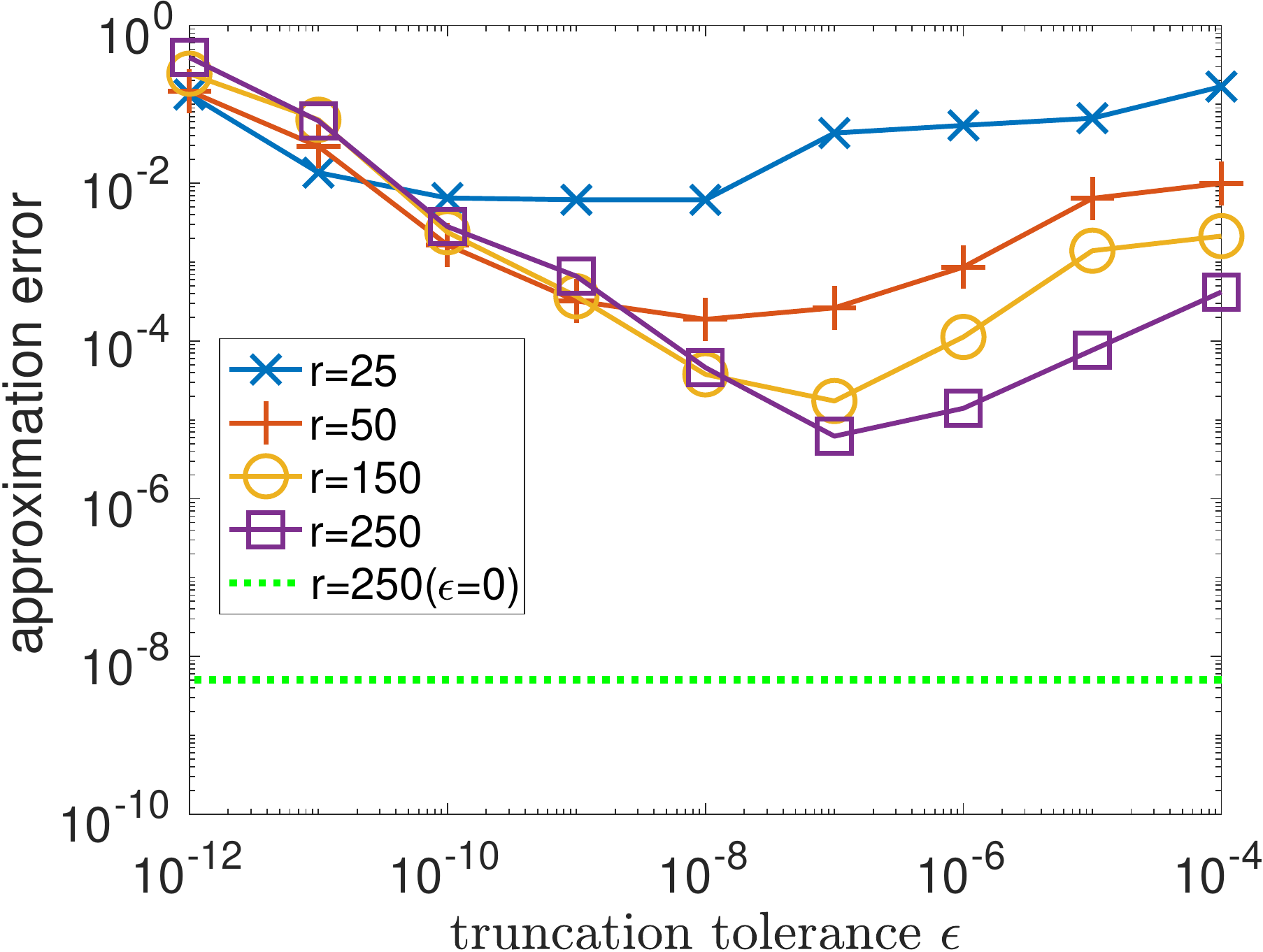}
      \includegraphics[scale=.13]{./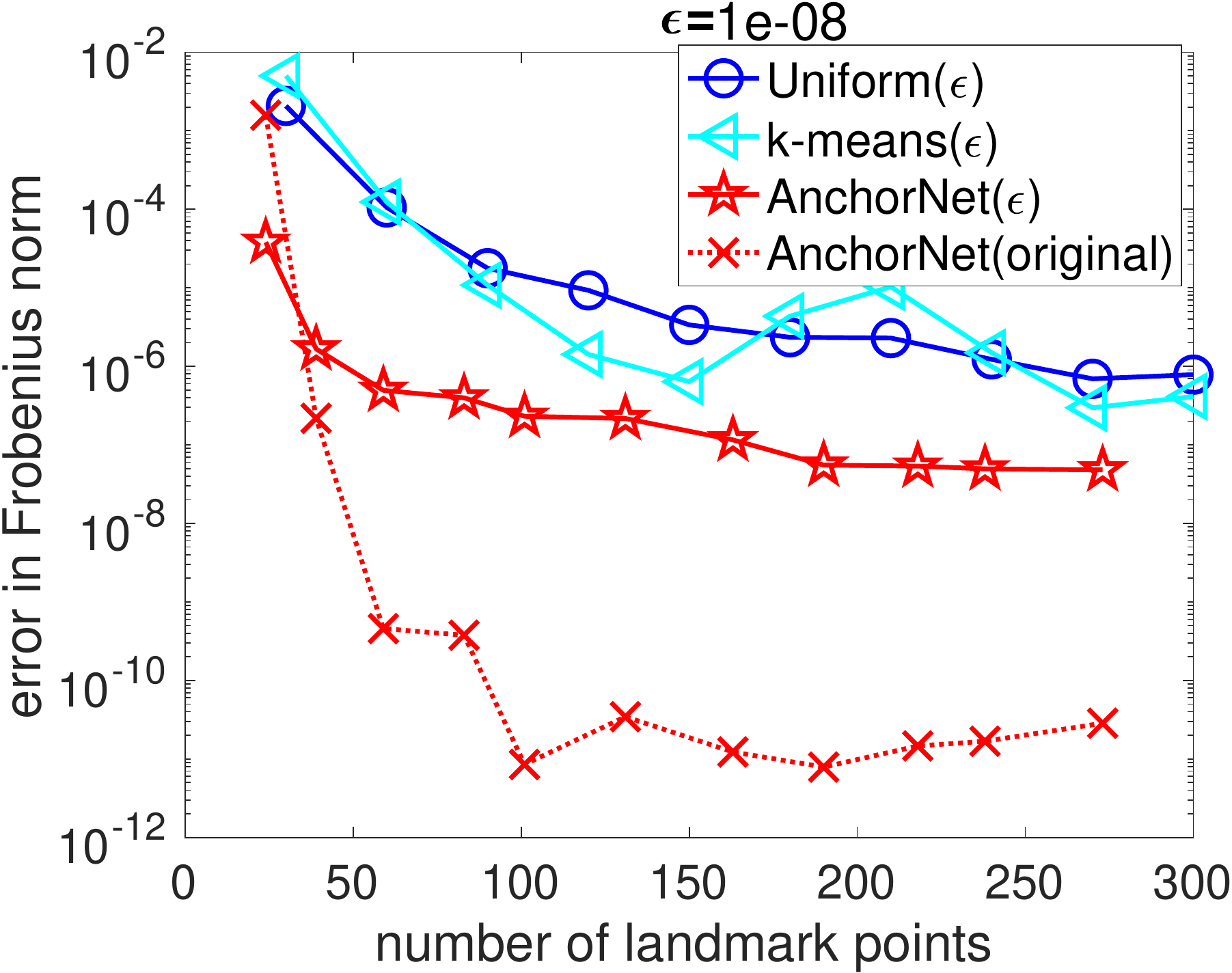}
    \includegraphics[scale=.13]{./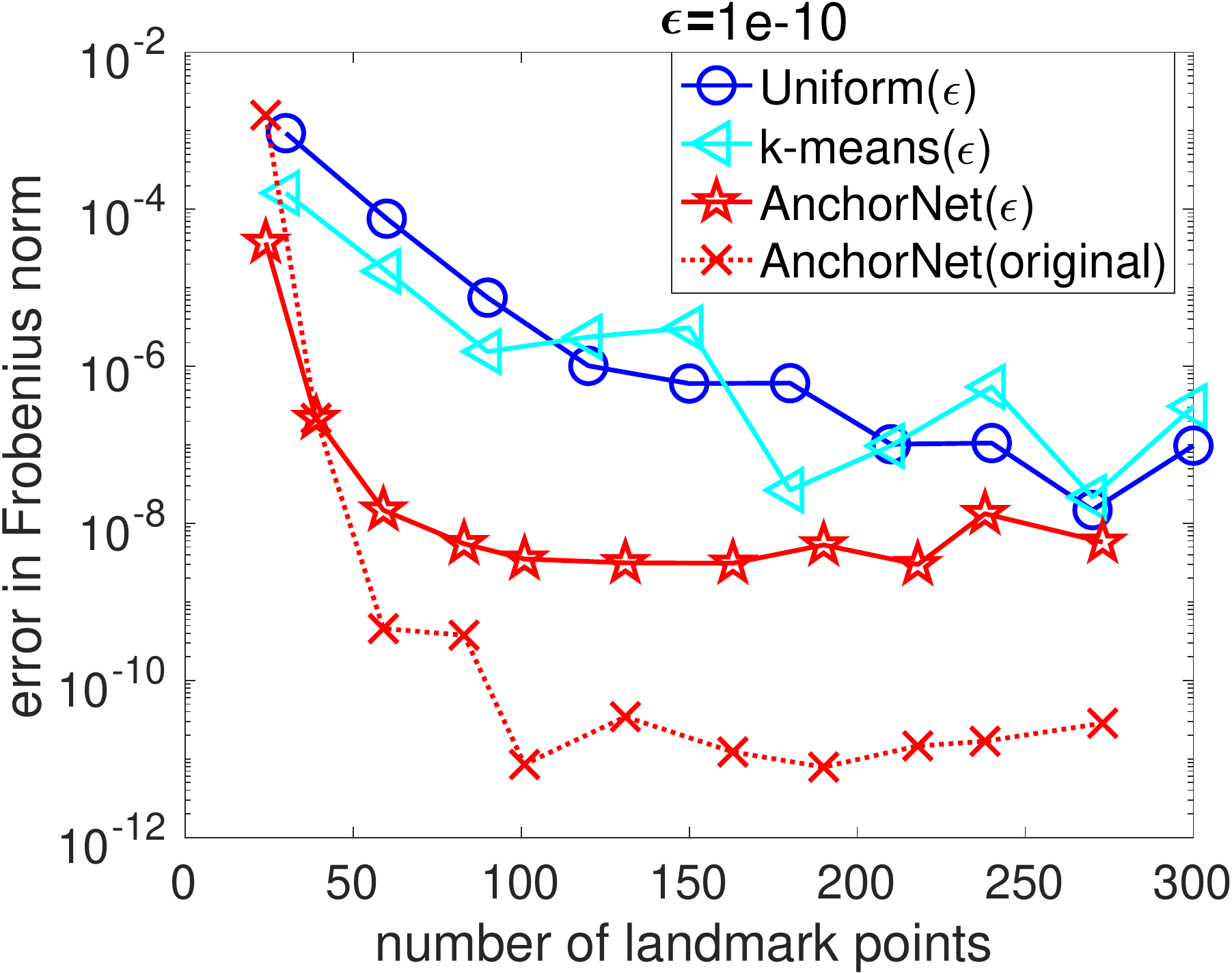}
    \includegraphics[scale=.13]{./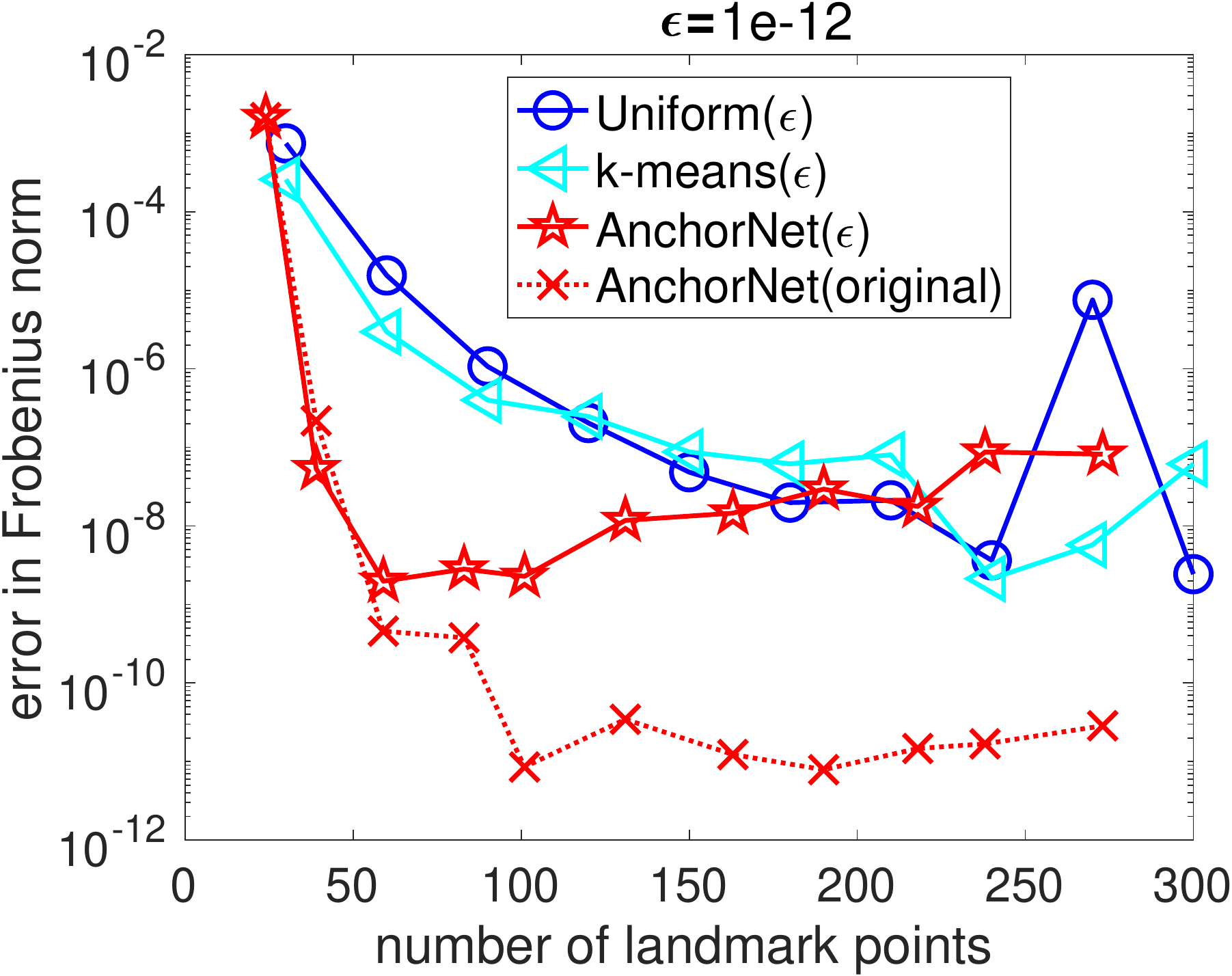}
    \includegraphics[scale=.13]{./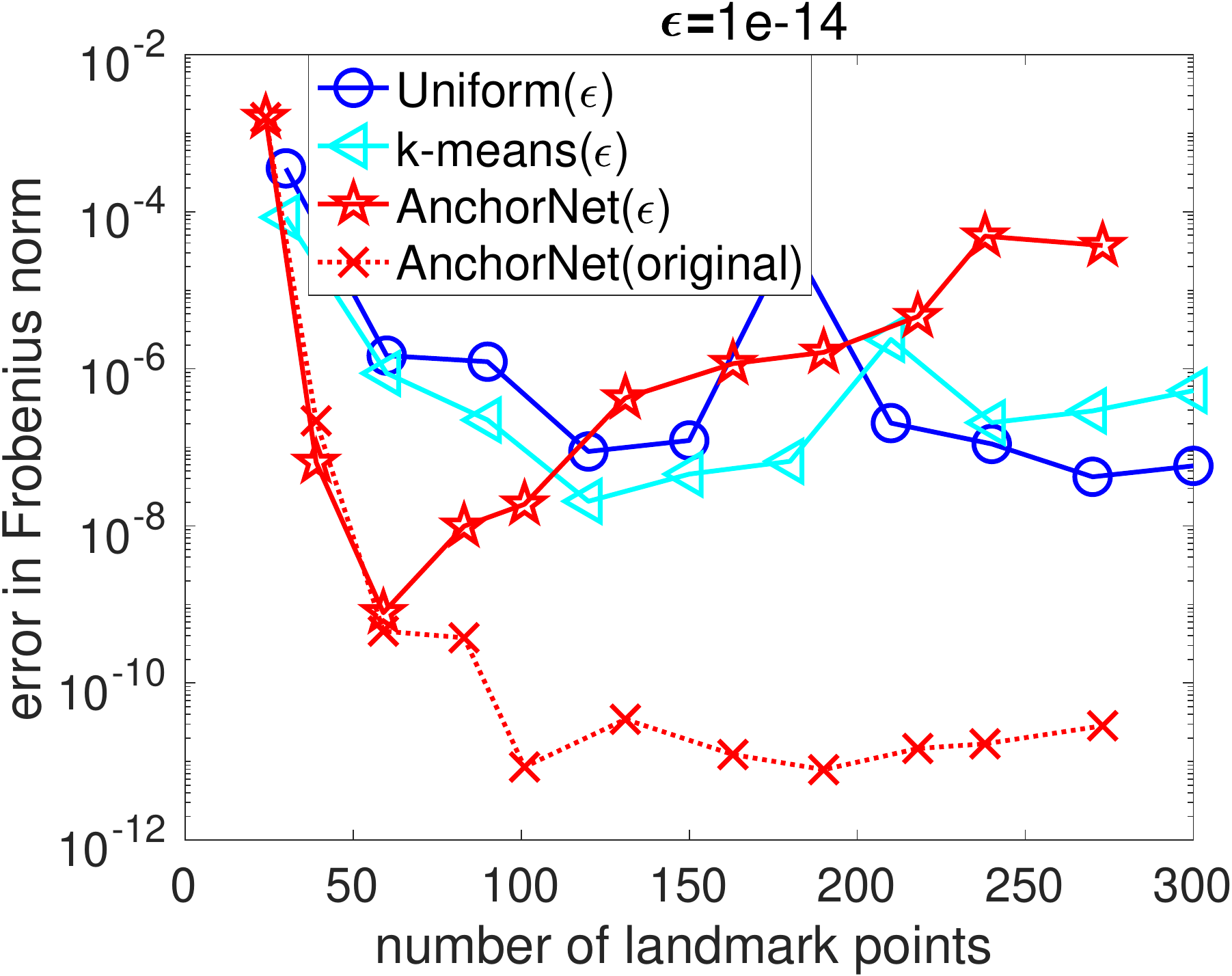}
    \includegraphics[scale=.13]{./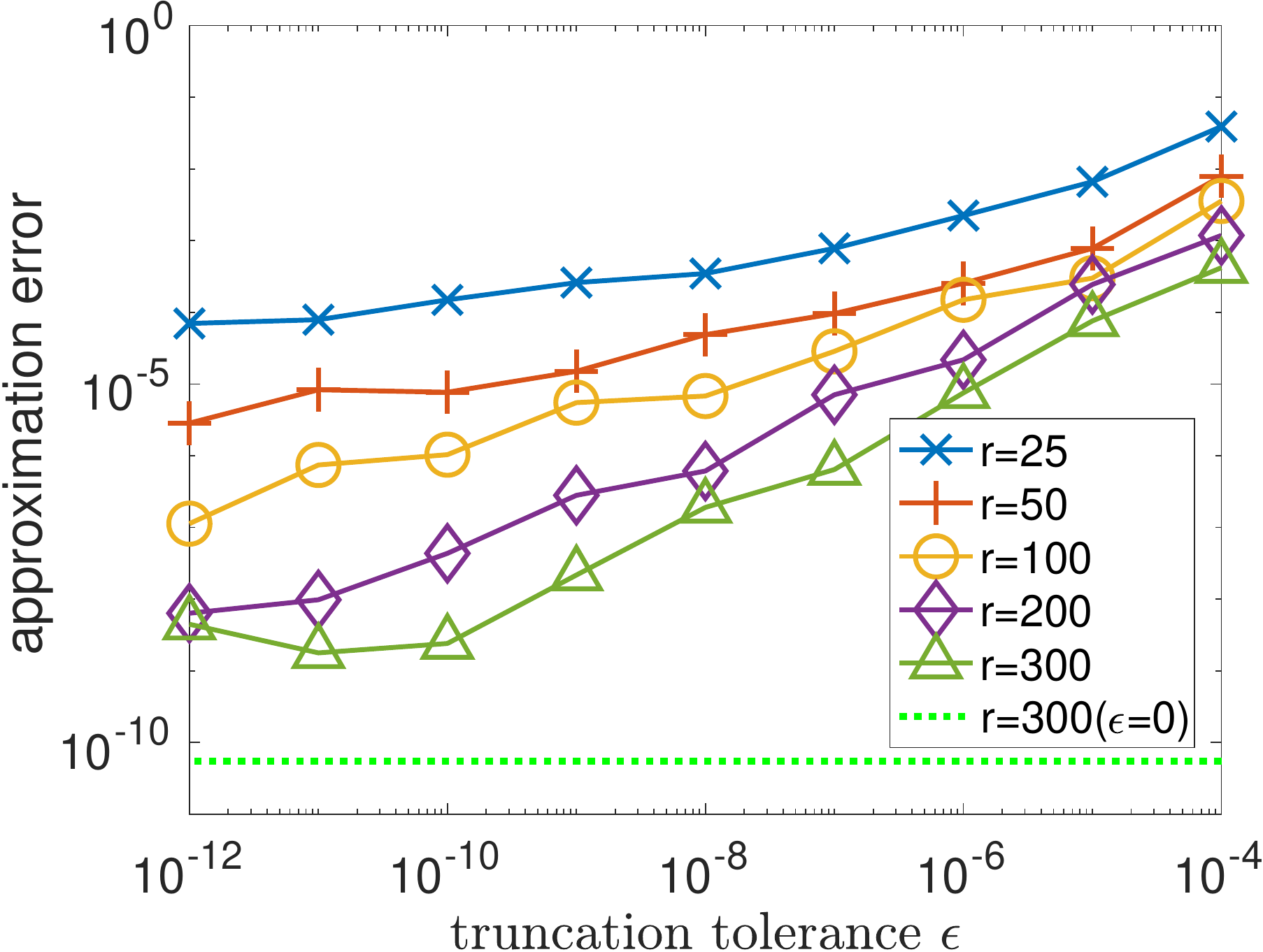}
    \caption{Approximation errors using stabilization techniques: $\epsilon$-pseudoinverse (top row) in \eqref{eq:nyspinv} and $\epsilon$-QR (bottom row) in \eqref{eq:nysQR}. 
    First four figures in each row are error-rank plots of ``stabilized" \nys methods (uniform sampling, $k$-means, anchor net) with $\epsilon=10^{-8},10^{-10},10^{-12},10^{-14}$ and original anchor net \nys (dotted line with '$\times$'), respectively. The last figure shows approximation errors of $k$-means \nys method versus truncation tolerance $\epsilon$, where several ranks are used and the dotted line with '$\times$' symbol denotes a fixed rank approximation error using the original \nys formula \emph{without} stabilization.}
    \label{fig:dkpinv} 
\end{figure}

\subsection{\nys variants for SPSD kernels}
\label{sub:stability}

To illustrate the possible numerical instability of existing \nys methods for SPSD kernel matrices, we consider the approximation of the Gaussian kernel matrix (which is SPSD) with rapidly decaying singular values.  {Since the kernel is SPSD, the numerical instability can be remedied via regularization, i.e. approximating $K+\beta I$ for a small constant $\beta>0$.
We present results for both $K$ and $K+\beta I$ and choose $\beta=10^{-9}$.}
The proposed method (AnchorNet) is compared to the following \nys schemes:
(1) the original uniform \nys method \cite{nys2001}, which was observed in \cite{nys2012} to yield satisfactory overall performance (error-time trade-off) compared to several other methods;
(2) the $k$-means clustering \nys method \cite{nys2008kmeans,nys2010kmeans}, which usually yields better accuracy than the uniform \nys method;
(3) the recursive ridge leverage-score (RLS) \nys method \cite{nys2017}, which improves the efficiency of the original leverage-score based sampling;
(4) the accelerated recursive ridge leverage-score (RLS-x) \nys method \cite{nys2017}, which is much faster than RLS but may not be as robust.
For probabilistic approaches (uniform samplig, RLS, RLS-x), the error is averaged over ten repeated runs.

The methods above are compared from two perspectives: numerical stability and computational efficiency. The Gaussian kernel $\kappa(x,y)=e^{-|x-y|^2/\sigma^2}$ is used and the two experiment settings are listed below.
\begin{enumerate}
    \item \textbf{Numerical stability.} We consider two Gaussian kernels with different choices of the bandwidth parameter $\sigma$: $10\%$ and $50\%$ times the radius of the standardized Abalone dataset. Note that larger $\sigma$ leads to faster singular value decay of the kernel matrix. {Without regularization, the results are presented in Figure \ref{fig:AbaGS}. With regularization, the results are shown in Figure \ref{fig:AbaGSreg}.}
    \item \textbf{Computational efficiency}. We consider two datasets: Abalone ($n=4177, d=8$) and Covertype $(n=581012,d=54)$. For Abalone, we choose $\sigma=2.3$; for Covertype, $\sigma$ is same as the one used in Section \ref{sub:indefinite}. 
    {The experiment results are collected as error-time plots in Figure \ref{fig:TimeGS} for $K$ and \ref{fig:TimeGSreg} for $K+10^{-9} I$.}
    The Covertype dataset is quite large and high-dimensional compared to the Abalone dataset, and the results for the two datasets are quite different, as can be seen in Figure \ref{fig:TimeGS}.
\end{enumerate}

\begin{figure}[htbp] 
    \centering
    \includegraphics[scale=.22]{./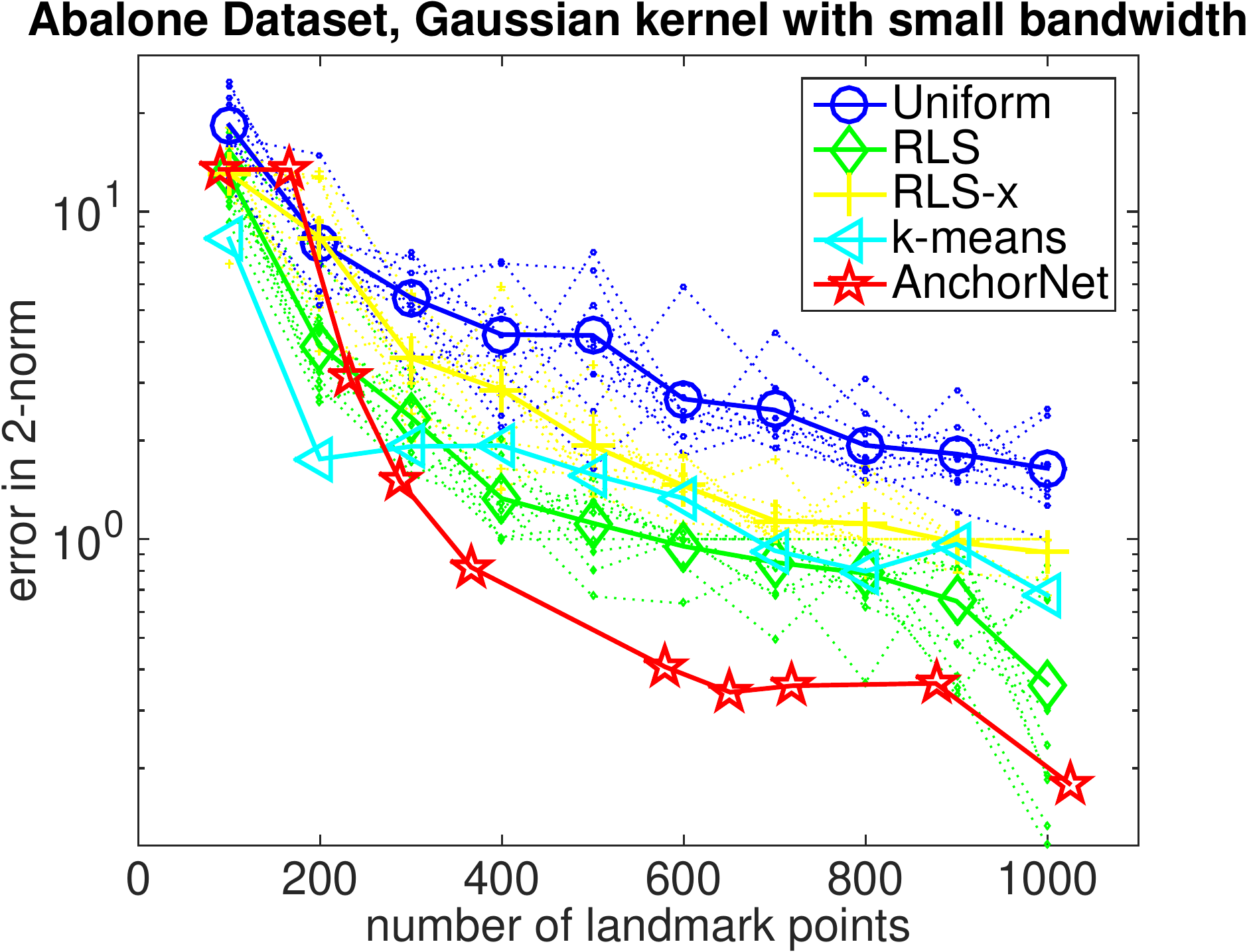}
    \includegraphics[scale=.22]{./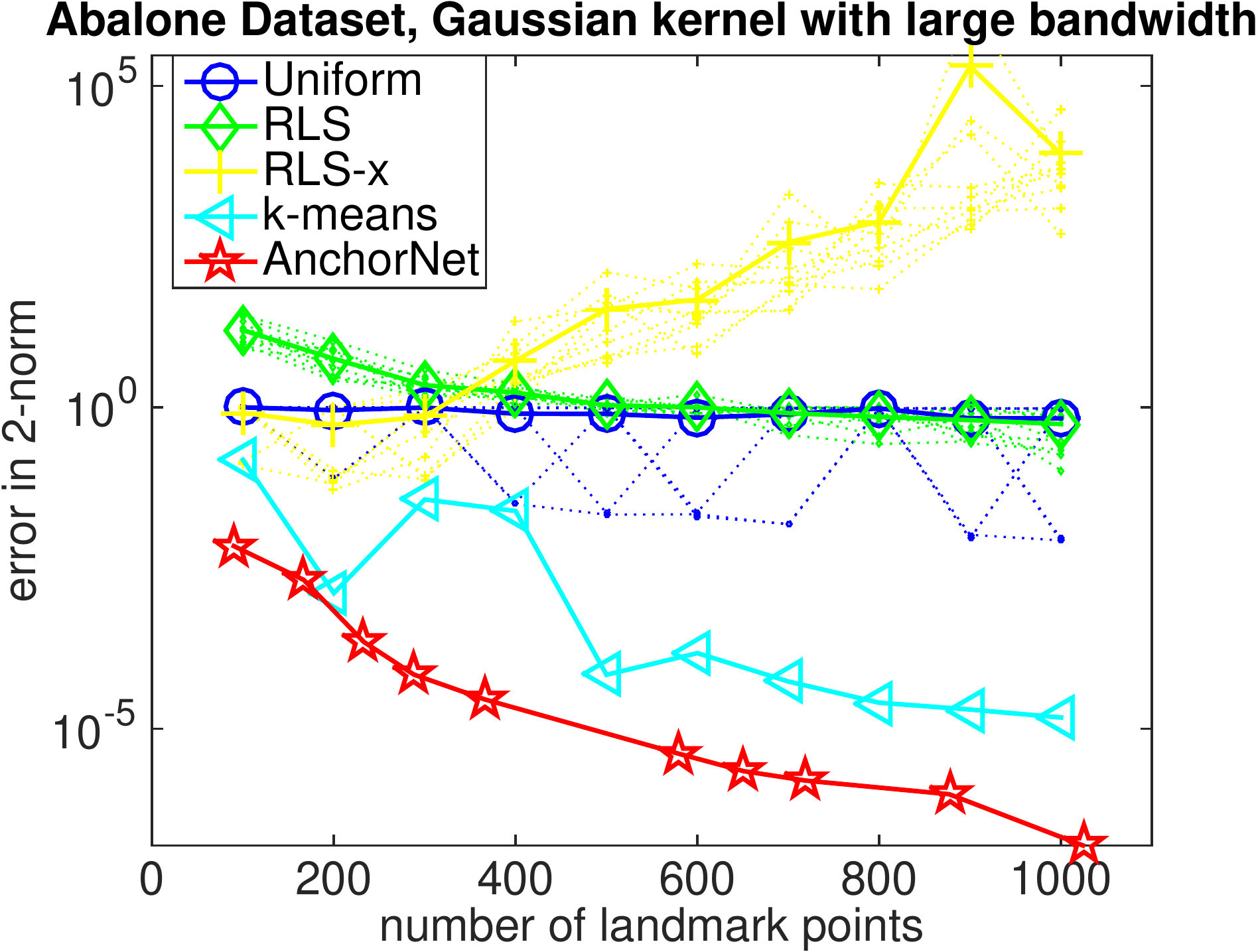}
    \includegraphics[scale=.22]{./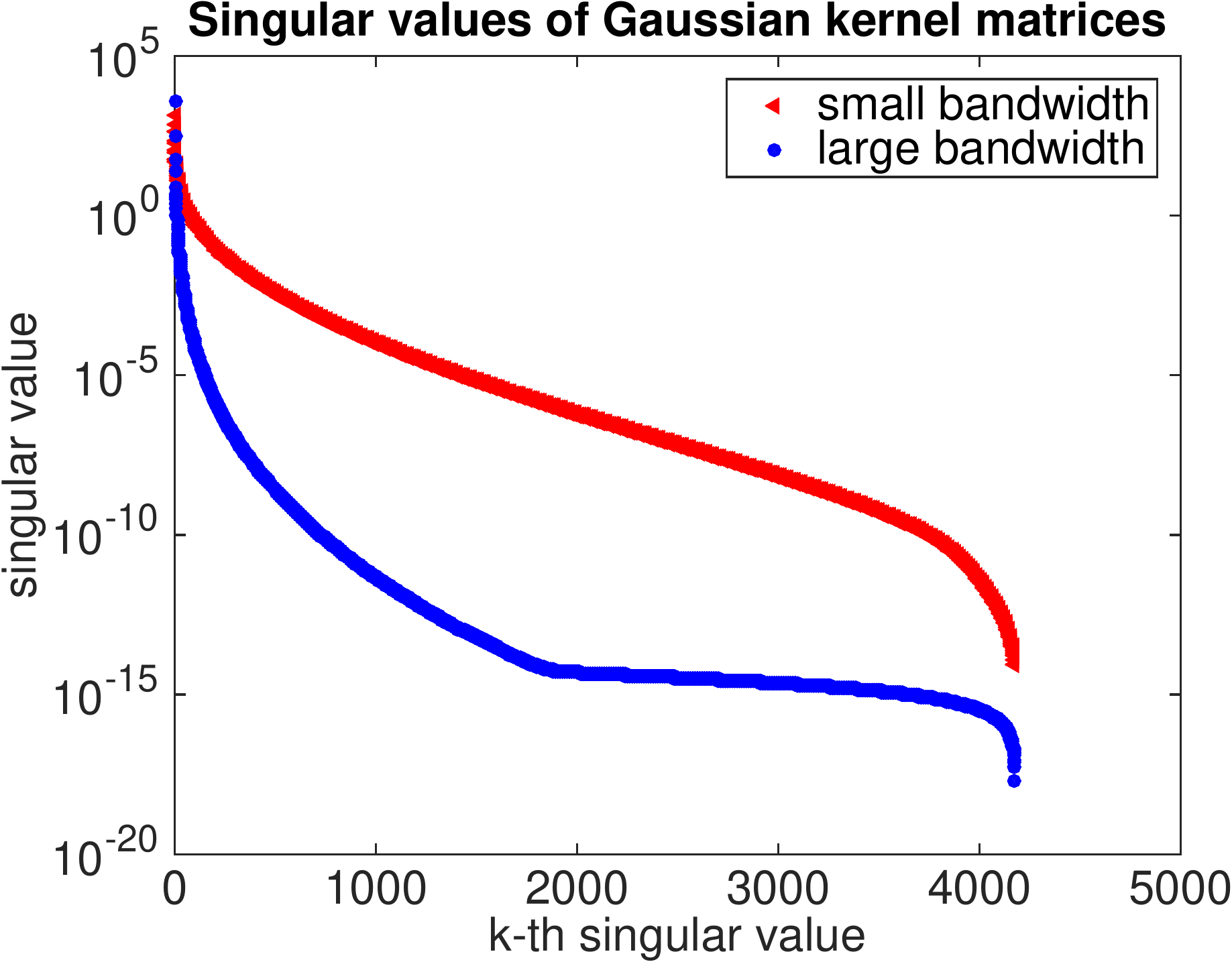}
    \caption{{Error-Rank plot for approximating Gaussian kernel matrix with $\sigma=2.3$ (left) and $\sigma=11.8$ (middle) and singular values of the two kernel matrices (right)}.}
    \label{fig:AbaGS}
\end{figure}
\begin{figure}[htbp] 
    \centering
    \includegraphics[scale=.22]{./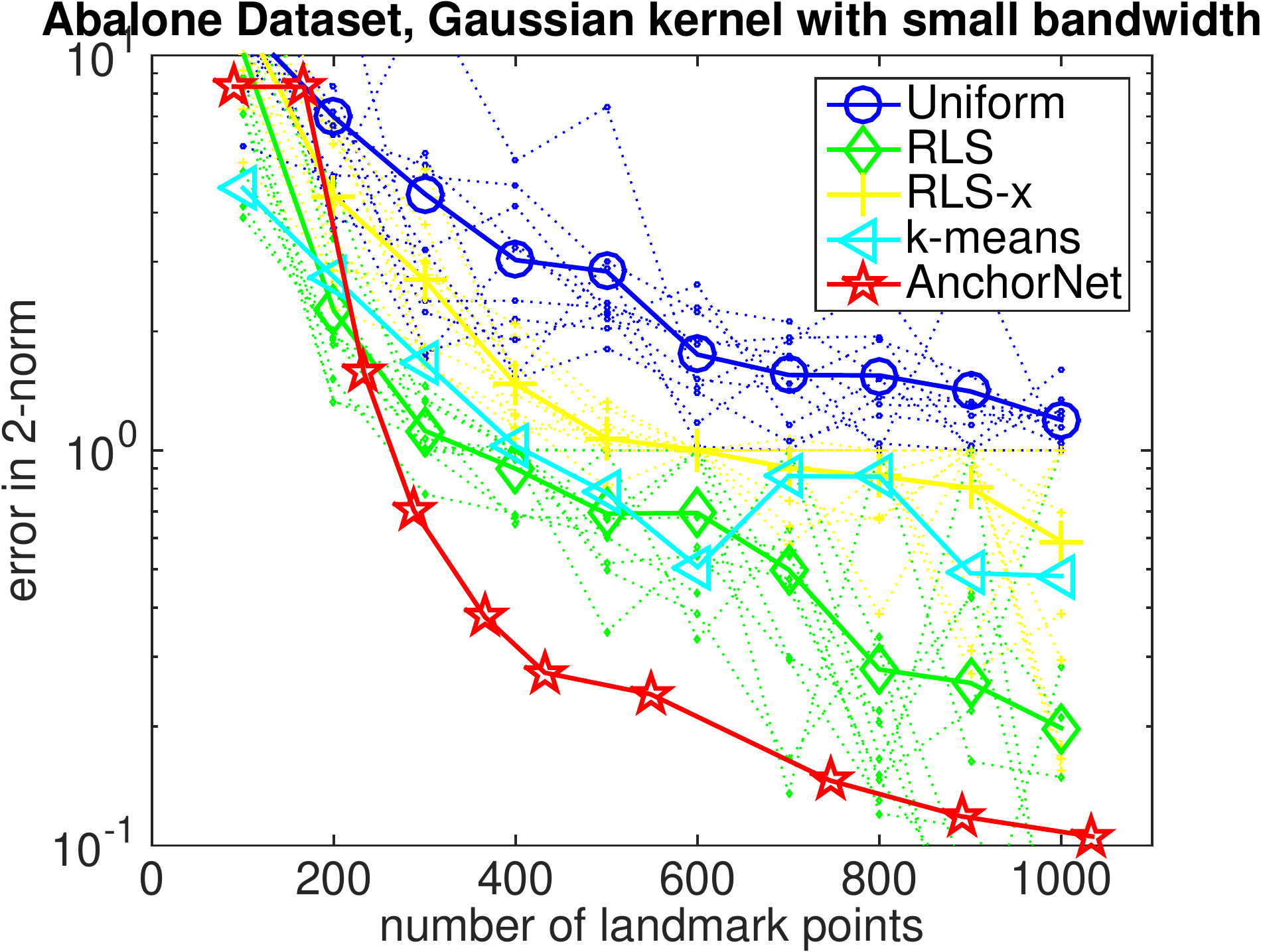}
    \includegraphics[scale=.22]{./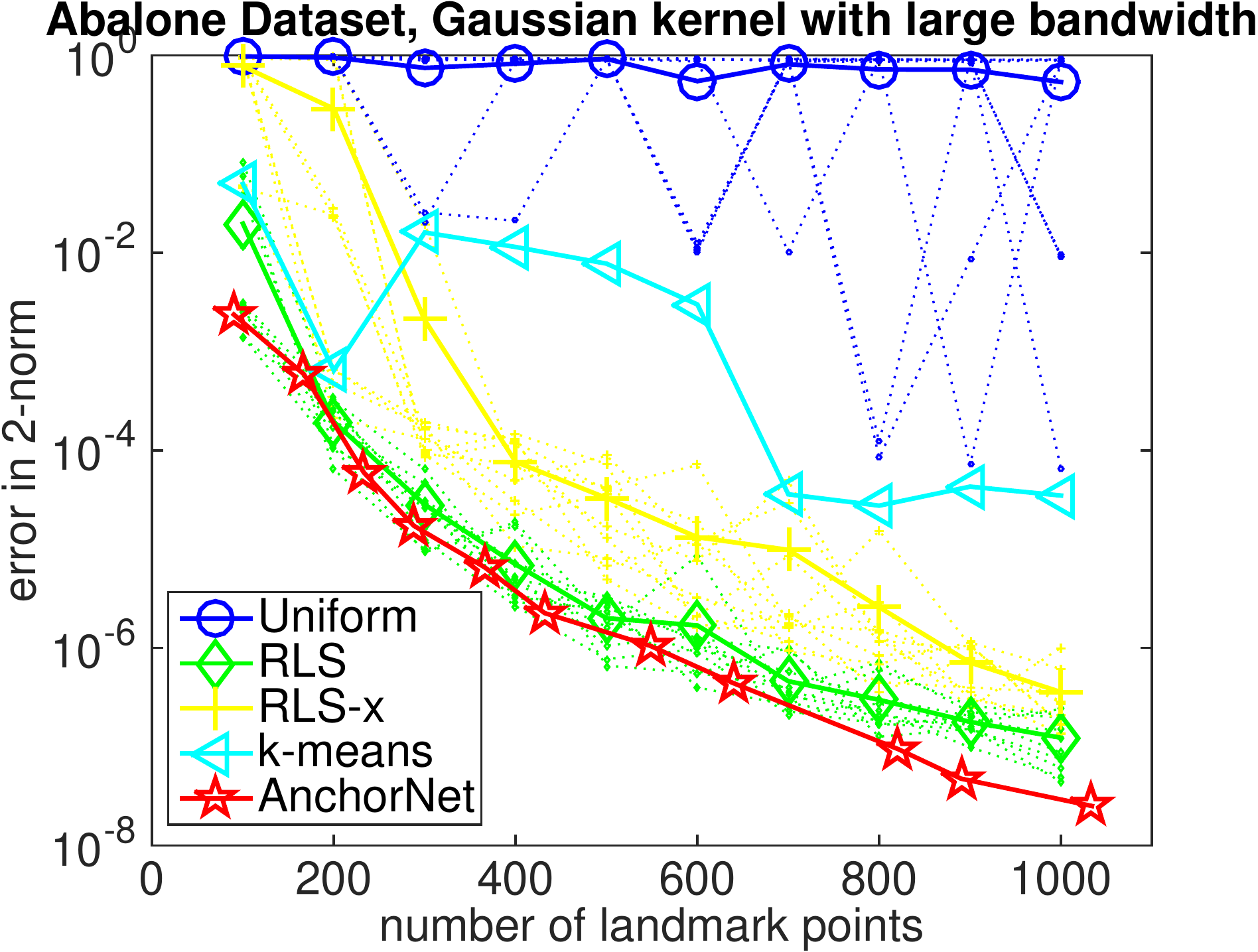}
    \caption{{Error-Rank plot for approximating regularized Gaussian kernel matrix with $\sigma=2.3$ (left) and $\sigma=11.8$ (right).}}
    \label{fig:AbaGSreg}
\end{figure}

\begin{figure}[htbp] 
    \centering
    \includegraphics[scale=.22]{./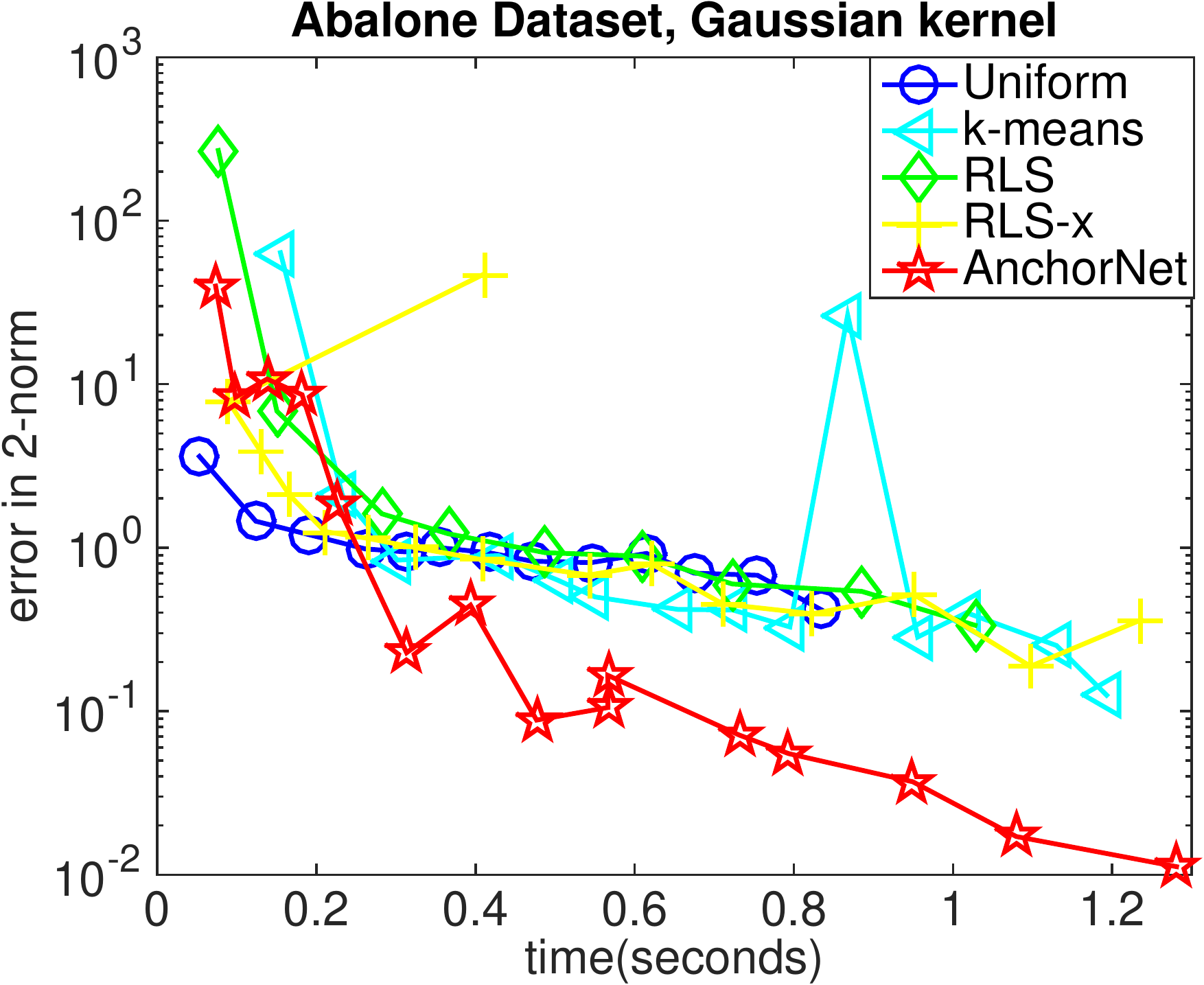}
    \includegraphics[scale=.22]{./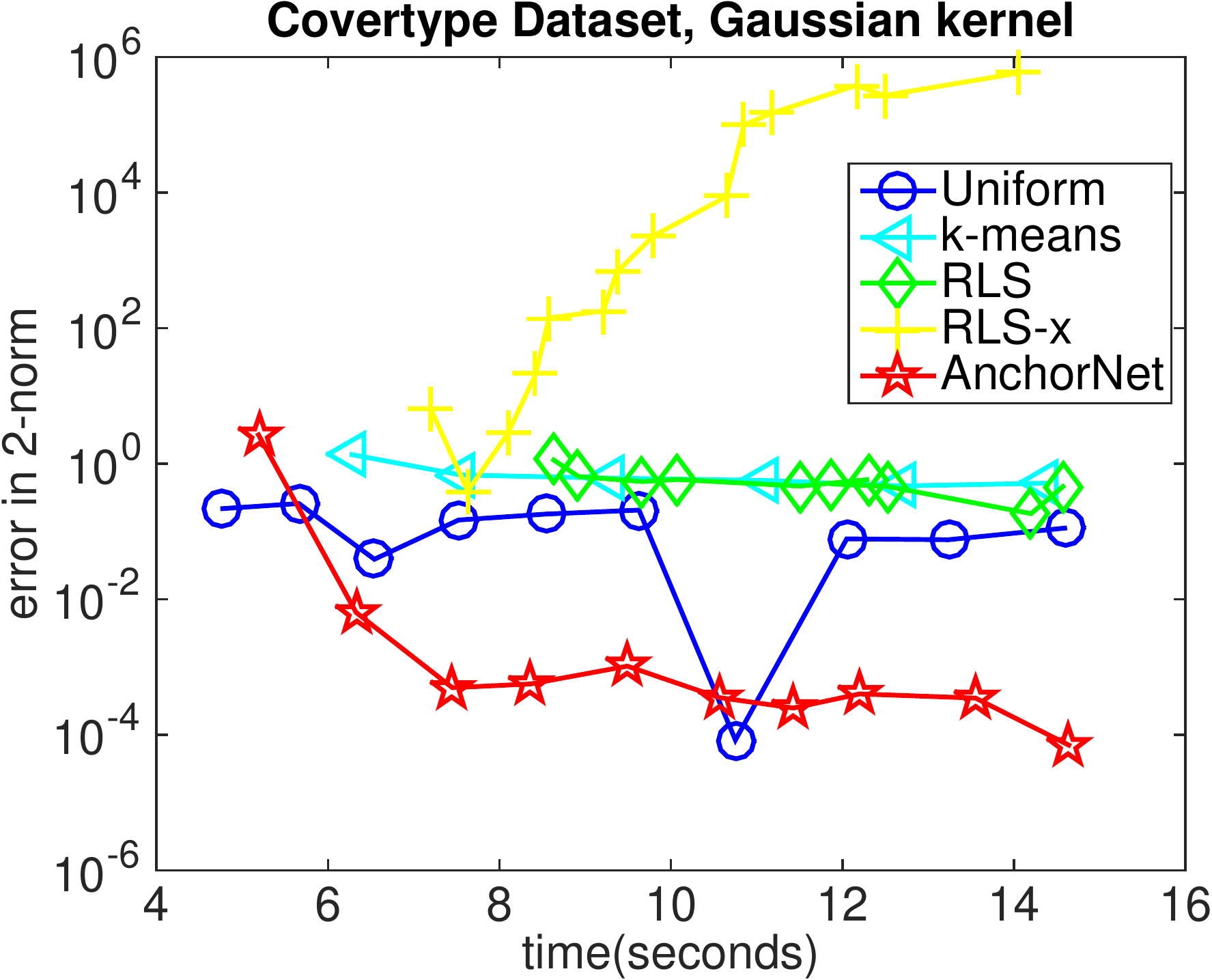}
    \caption{Error-Time plot for approximating Gaussian kernel matrix with Abalone Dataset (left) and Covertype Dataset (right).}
    \label{fig:TimeGS} 
\end{figure}

\begin{figure}[htbp] 
    \centering
    \includegraphics[scale=.22]{./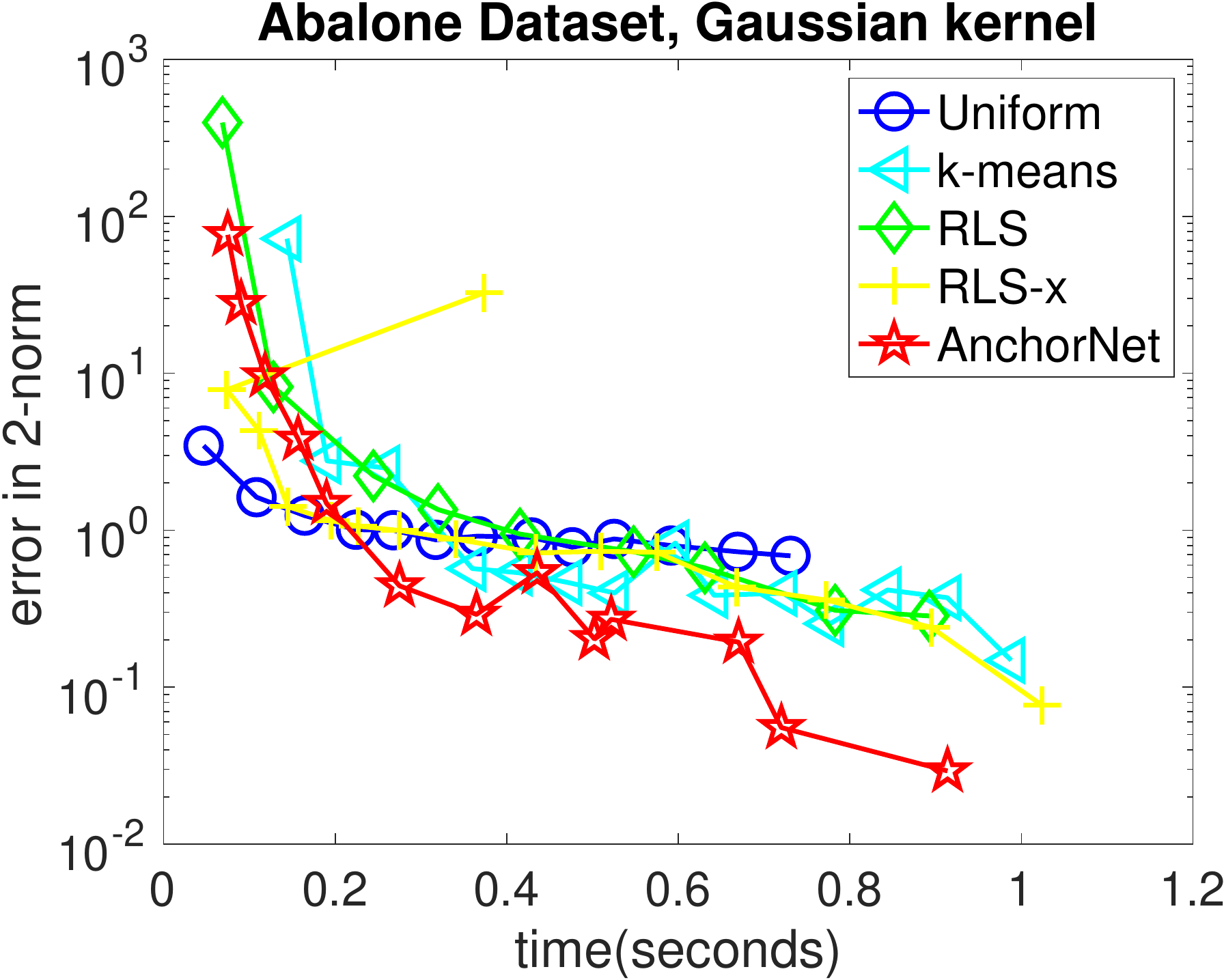}
    \includegraphics[scale=.22]{./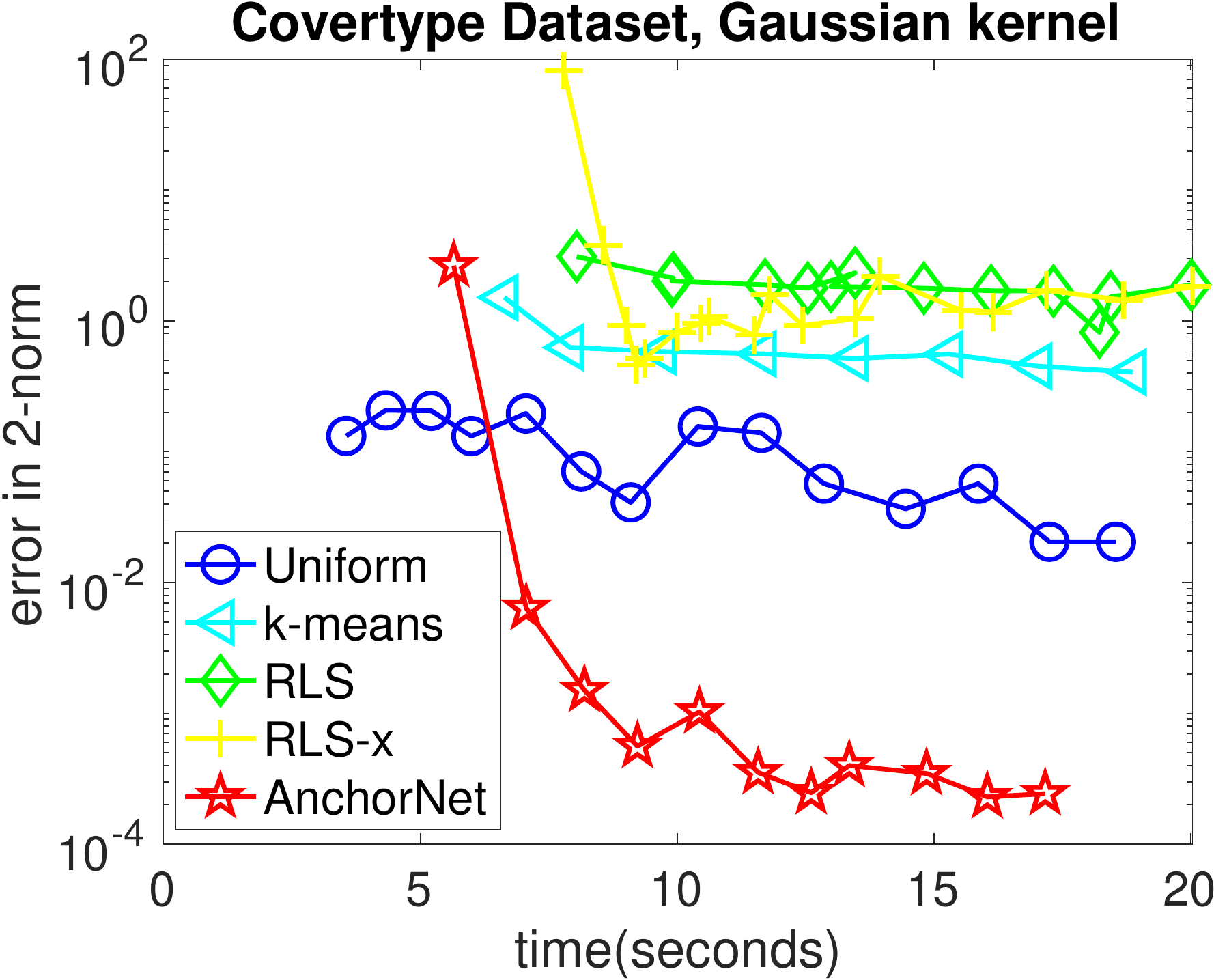}
    \caption{{Error-Time plot for approximating regularized Gaussian kernel matrix with Abalone Dataset (left) and Covertype Dataset (right)}.}
    \label{fig:TimeGSreg} 
\end{figure}


According to Figures \ref{fig:AbaGS} -- Figure \ref{fig:TimeGSreg}, we have the following observations.
\begin{enumerate}
    \item Overall, the anchor net method is more accurate and robust compared to other \nys methods. It achieves significantly better error-time trade-off for large scale high-dimensional datasets.
    \item {As can be seen from Figure \ref{fig:AbaGS}-middle, for SPSD kernel matrices with rapidly decaying singular values, probabilistic methods are subject to numerical instability. 
    Via regularization, the issue can be resolved for RLS and RLS-x  but not for uniform sampling, cf. Figure \ref{fig:AbaGSreg}-right.
    The anchor net method, on the other hand, achieves best accuracy without requiring regularization.}
    \item For large scale high-dimensional datasets like Covertype, {we see from Figure \ref{fig:TimeGS} and Figure \ref{fig:TimeGSreg} that the anchor net method is able to reach high accuracy in significantly shorter time than other methods. Aside from numerical stability, this demonstrates the superior efficiency of anchor net method in practice.}
\end{enumerate}

\begin{remark}
As shown in Figure \ref{fig:AbaGS}-right, the kernel matrix with larger $\sigma$ has faster singular value decay, and consequently is more suitable for low-rank approximations. Nevertheless, it should be emphasized that better spectral property does \emph{not} necessarily imply more accurate \nys approximations. Instead, it poses a great numerical challenge for the effective use of \nys approximations: $K_{SS}$ may have many singular values near 0 and computing $K_{SS}^{+}$ will be numerically \emph{unstable} unless the landmark points $S$ are well chosen. This indicates that the \nys approximation accuracy can become even worse as the number of landmark points increases. As one can see in Figure \ref{fig:AbaGS}-middle as well as Figure \ref{fig:dk500}-right, this is indeed the case for many \nys schemes. 
\end{remark}

\subsection{\nys methods and pivoted Cholesky factorization for SPSD matrices}
In this section, we compare $k$-means \nys method and anchor net method to partially pivoted Cholesky decomposition in \cite{chol2012}, which was shown to work well for SPSD kernel matrices associated with \emph{low} dimensional datasets.
We consider the Gaussian kernel $\kappa(x,y)=\exp(-|x-y|^2/\sigma^2)$ and form the matrix $K_{XX}$ with Abalone dataset $(n=4177,d=8)$ .
For the bandwidth parameter $\sigma$, we use three different values: $\sigma=2.3, 11.8, 47.4$ to investigate the performance of three methods.
The matrix with $\sigma=2.3$ has slowest singular value decay while the matrix with $\sigma=47.4$ has the fastest singular value decay.

{We consider approximating kernel matrices without and with regularization, i.e. $K$ and $K + \beta I$ where the regularization parameter is chosen as $\beta=10^{-9}$.
The test results are presented in Figure \ref{fig:GSchol} and Figure  \ref{fig:GScholreg}, respectively.
From Figure \ref{fig:GSchol}, we see that the performance of partially pivoted Cholesky decomposition is quite sensitive to the bandwidth parameter if no regularization is applied to $K$.
In this case, large $\sigma$ can lead to numerical instability as approximation rank increases, while small $\sigma$ can lead to slow error decay and poor approximation accuracy.
The numerical instability of partially pivoted Cholesky method is not seen when regularization is applied to $K$ according to Figure \ref{fig:GScholreg}.
}
The \nys methods achieve better accuracy than pivoted Cholesky decomposition in all cases. 
It is easy to see that the anchor net method is least sensitive to $\sigma$, achieving the best accuracy and numerical stability.
\begin{figure}
    \centering
    \includegraphics[scale=0.2]{./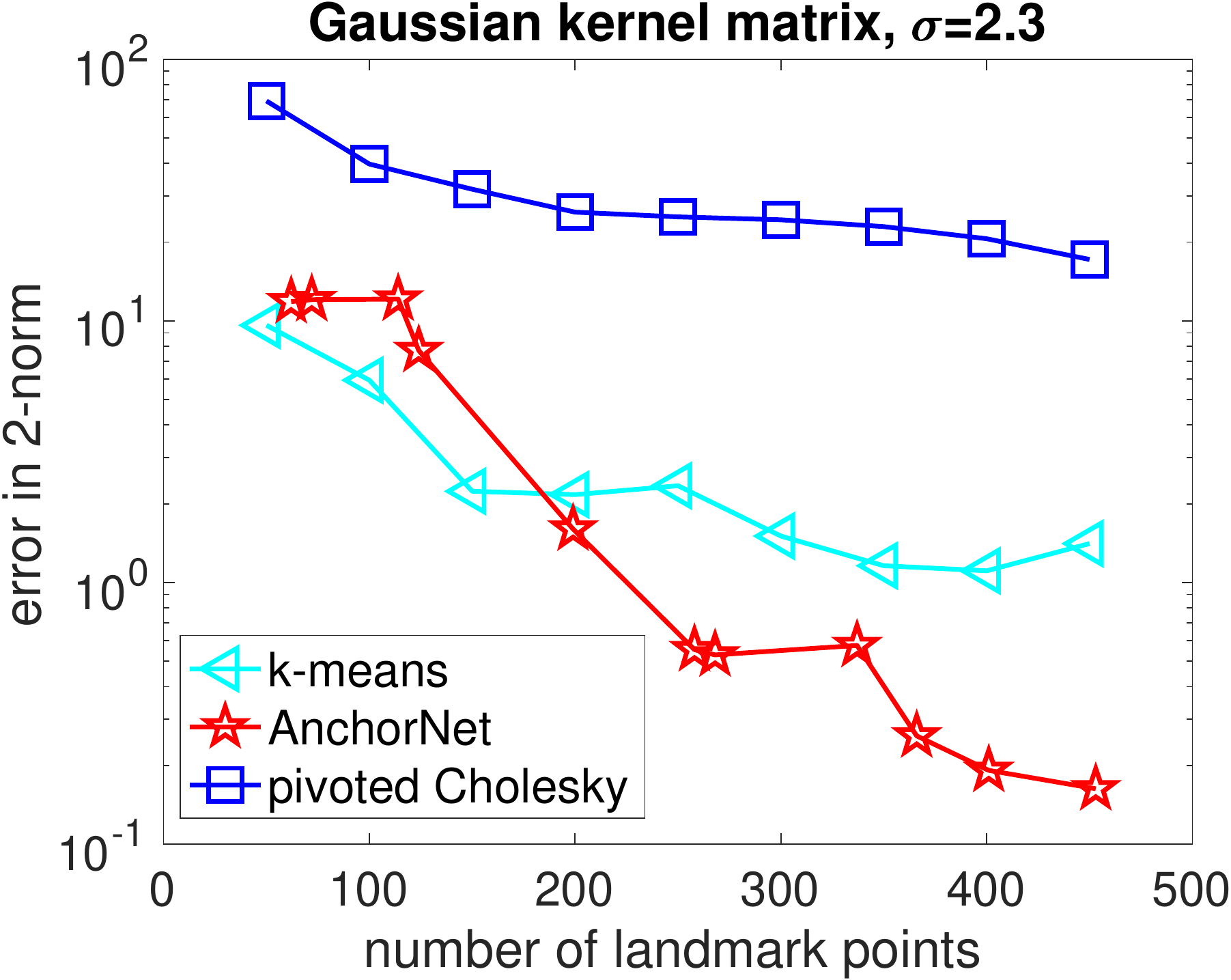}
    \includegraphics[scale=0.2]{./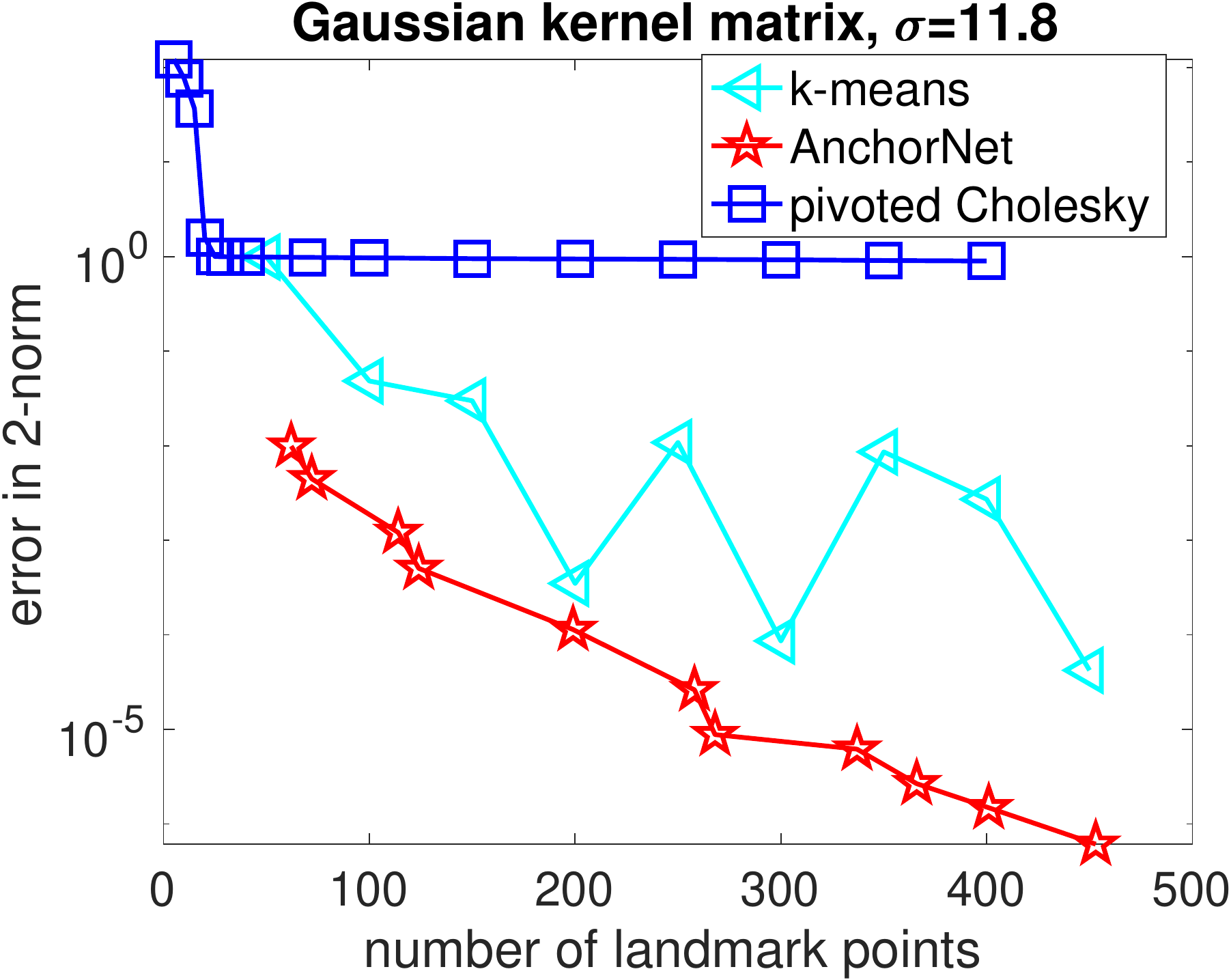}
    \includegraphics[scale=0.2]{./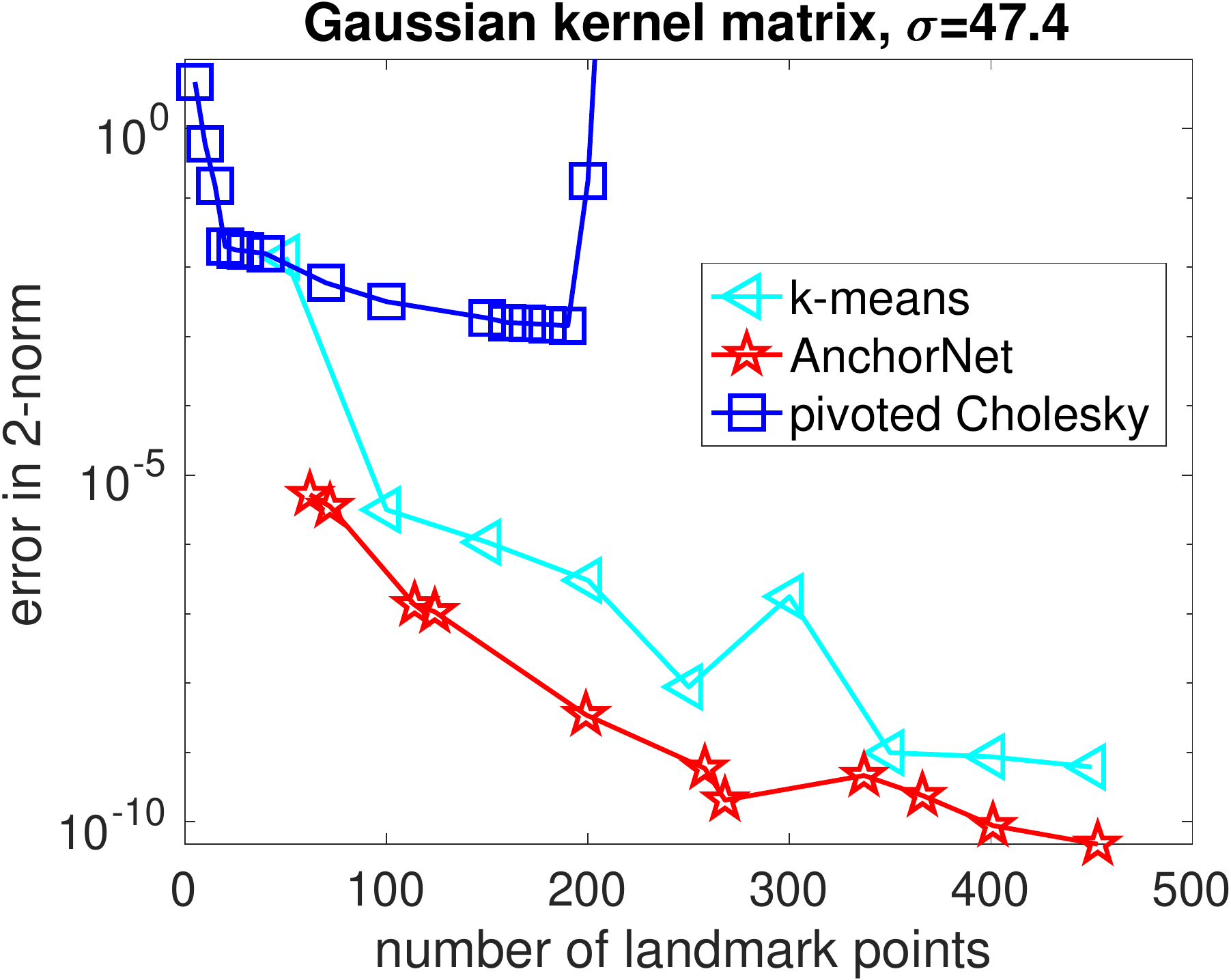}
    \caption{Approximating three Gaussian kernel matrices with bandwidths : $\sigma=2.3, 11.8, 47.4$ (left to right).}
    \label{fig:GSchol}
\end{figure}
\begin{figure}
    \centering
    \includegraphics[scale=0.2]{./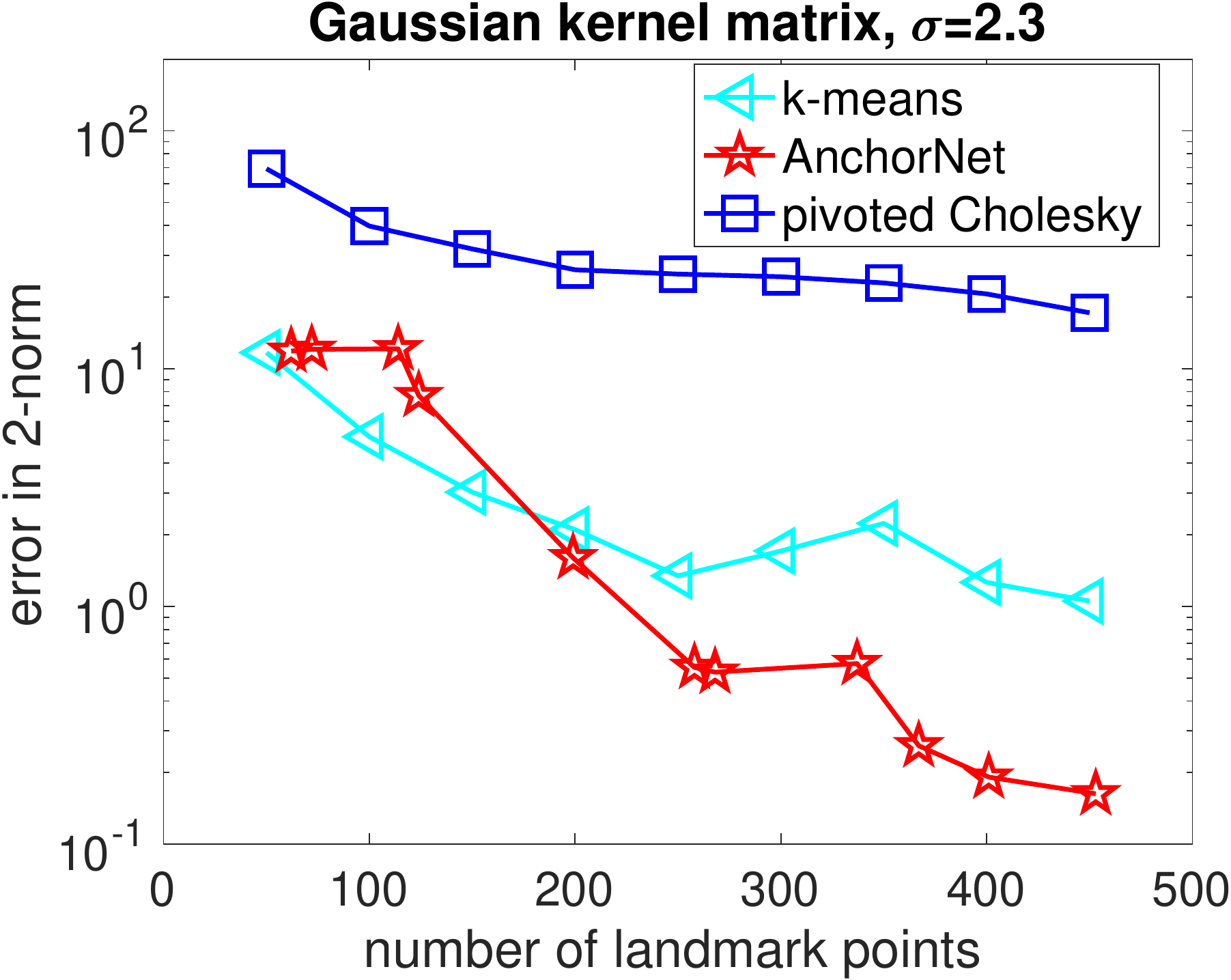}
    \includegraphics[scale=0.2]{./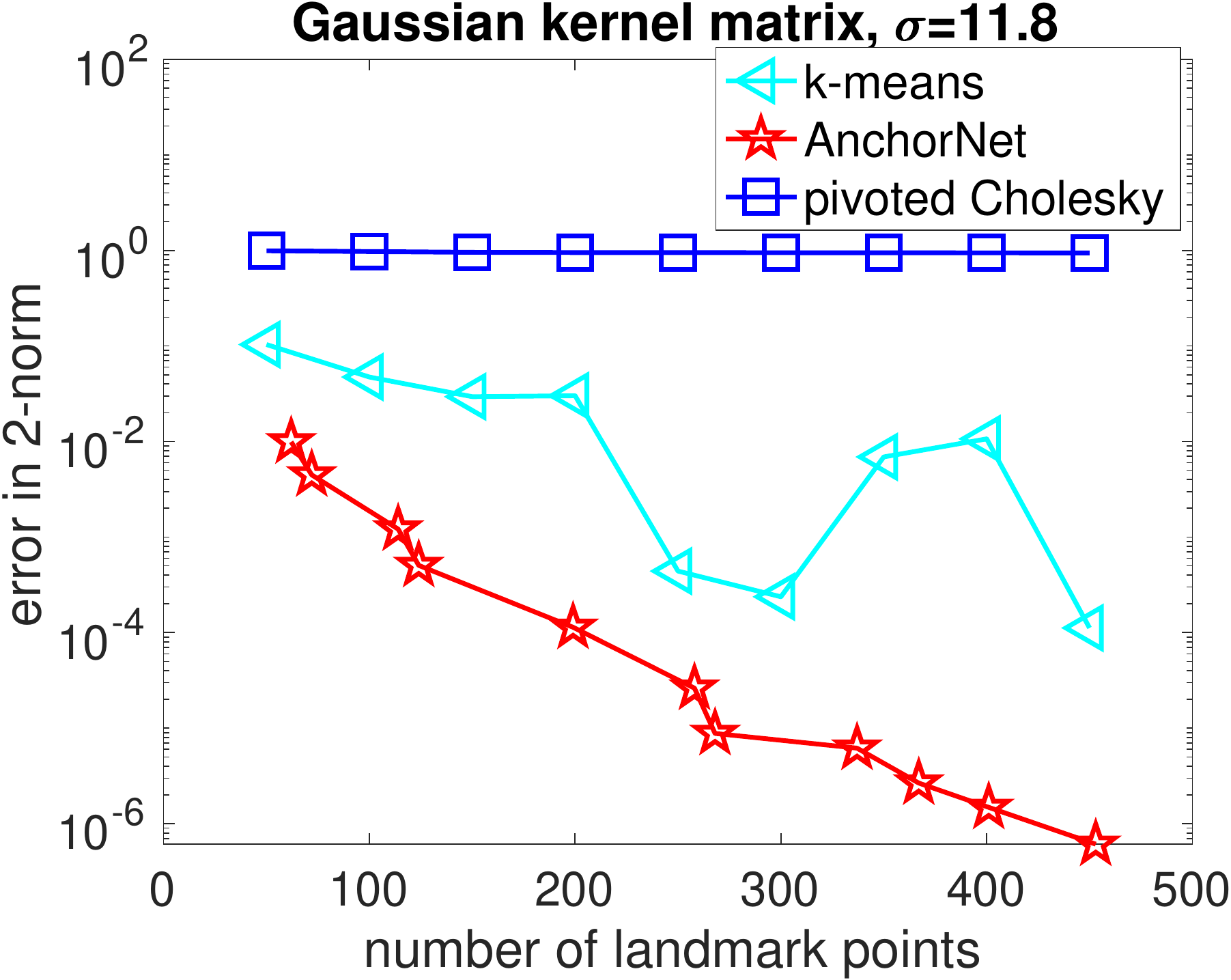}
    \includegraphics[scale=0.2]{./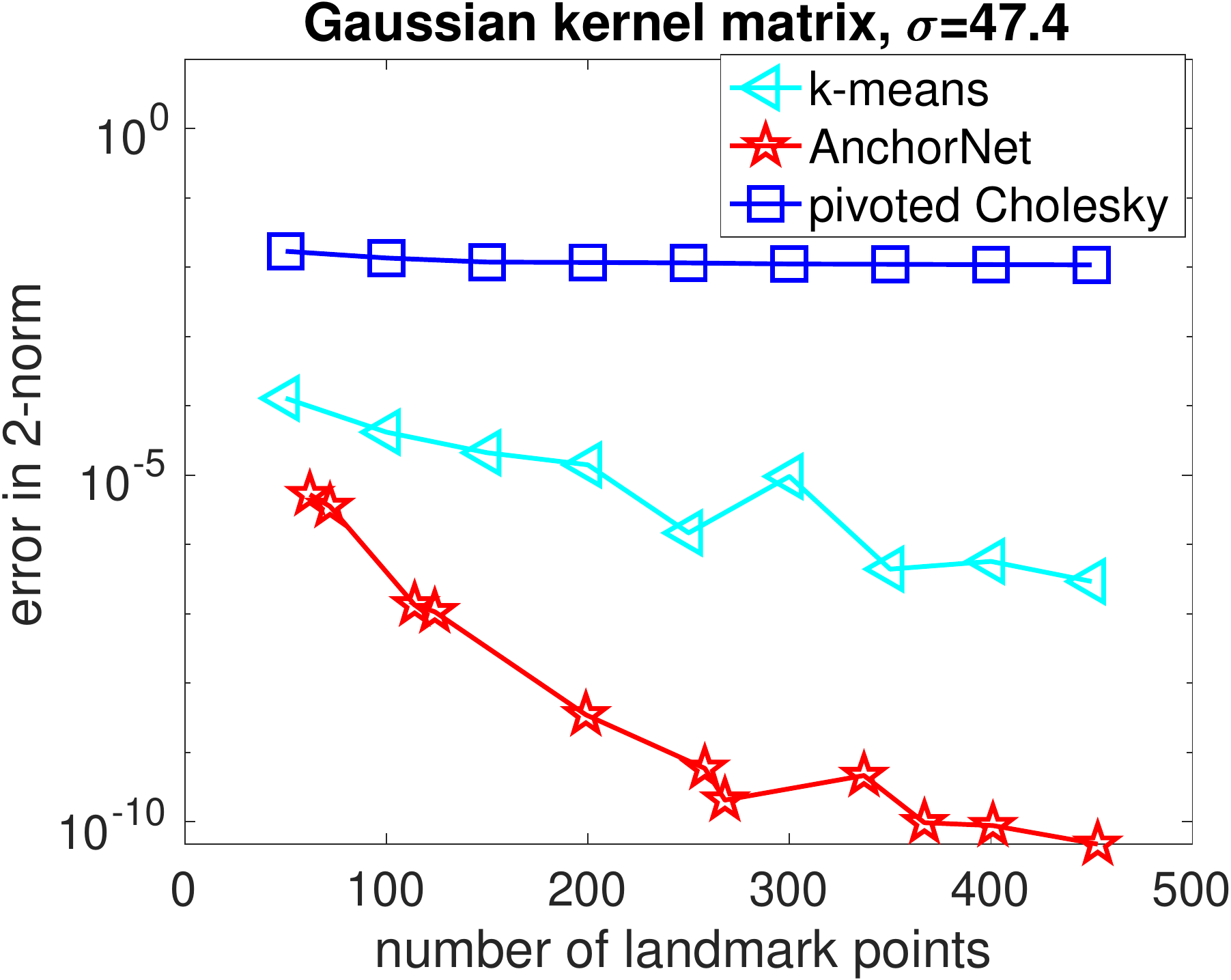}
    \caption{{Approximating regularized Gaussian kernel matrices with bandwidths : $\sigma=2.3, 11.8, 47.4$ (left to right).}}
    \label{fig:GScholreg}
\end{figure}






\section{Conclusion}
\label{sec:conclusion}
In this paper, we first analyze the \nys approximation error in the most general setting covering both symmetric positive semi-definite (SPSD) and indefinite kernel matrices. 
The theoretical finding indicates that landmark points should encode the geometry of the dataset to avoid numerical instability and meanwhile to improve the approximation accuracy.
Guided by the theoretical results, we propose the anchor net method for performing \nys approximation with linear complexity in time and space.
The proposed method is valid for both SPSD and indefinite kernels and is efficient in high dimensions.
Comprehensive experiments covering indefinite and SPSD kernels, low and high dimensional data, original and stabilized \nys approximations, are performed to investigate the performance of existing methods in terms of accuracy, numerical stability, and speed.
Overall, the anchor net method displays the best numerical stability and computational efficiency. It is able to achieve better accuracy than other \nys schemes with smaller computational costs and demonstrate excellent accuracy and numerical stability for indefinite kernels compared to other methods with stabilized techniques. 
{We plan to integrate the method into the computation of hierarchical matrices \cite{hack2015book,DBLP:journals/nm/Bebendorf00,bauer2020kernelindependent,smash}, which will significantly extend the scope of applications.}

\section*{Acknowledgments}
The authors are indebted to Michele Benzi for his suggestion on improving the presentation of the theoretical analysis and  Yuji Nakatsukasa for the helpful discussion on the stable implementation of pseudoinverse.


%
%
\bibliography{nystrom_arxiv}
\bibliographystyle{siamplain}
\end{document}